\newcommand{\R}{{\mathbb R}}
\newtheorem{theorem}{Theorem}[section]
\newtheorem{corollary}[theorem]{Corollary}
\newtheorem{thm}[theorem]{Theorem}
\newtheorem{definition}[theorem]{Definition}
\newtheorem{remark}[theorem]{Remark}
\newtheorem{lemma}[theorem]{Lemma}
\newtheorem{prop}[theorem]{Proposition}
\newtheorem{proposition}[theorem]{Proposition}
\begin{document}

\title{A Brunn-Minkowski type inequality for extended symplectic capacities
of convex domains and length estimate for a class of billiard trajectories}

\date{February 23, 2023}
\author{Rongrong Jin and Guangcun Lu
\thanks{Corresponding author
\endgraf \hspace{2mm} Partially supported
by the NNSF  11271044 of China.
\endgraf\hspace{2mm} 2010 {\it Mathematics Subject Classification.}
 53D35, 53C23 (primary), 70H05, 37J05, 57R17 (secondary).
 \endgraf\hspace{2mm} {\it Key words and phrases}. Extended Ekeland-Hofer-Zehnder symplectic capacities,
 Brunn-Minkowski type inequality, non-periodic  billiards, convex domains.
 }}
 \maketitle \vspace{-0.3in}




\abstract{
In this paper, we firstly generalize the Brunn-Minkowski type inequality for Ekeland-Hofer-Zehnder
symplectic capacity of bounded convex domains established by Artstein-Avidan-Ostrover in 2008 to
 extended symplectic capacities of bounded convex domains constructed by authors based on a class of Hamiltonian non-periodic boundary value problems recently.
  Then we introduce a class of non-periodic  billiards in convex domains, and for them we prove
  some corresponding results to those for periodic  billiards in convex domains obtained
   by Artstein-Avidan-Ostrover in 2012.
} \vspace{-0.1in}
\medskip\vspace{12mm}

\maketitle \vspace{-0.5in}


\tableofcontents

\section{Introduction and main results}\label{section:1}
\setcounter{equation}{0}

Throughout this paper, a  compact, convex subset of $\mathbb{R}^m$
with nonempty interior is called a \textsf{convex body} in $\mathbb{R}^m$.
The set of all convex bodies in $\mathbb{R}^m$ is denoted by $\mathcal{K}(\mathbb{R}^m)$.  As usual, a domain in $\mathbb{R}^m$ means a connected open subset of $\mathbb{R}^m$.
For $r>0$ and $p\in \mathbb{R}^m$ let $B^{m}(p, r)$ be the open ball centered at $p$ of radius $r$ in $\mathbb{R}^m$, and
$B^{m}(r):=B^{m}(0, r)$, $B^{m}:=B^{m}(1)$.
We always use $J$ to denote  standard  complex structure on $\mathbb{R}^{2n}$, $\mathbb{R}^{2n-2} $ and $\mathbb{R}^{2} $ without confusions.
With the linear coordinates $(q_1,\cdots,q_n,p_1,\cdots,p_n)$ on
 $\mathbb{R}^{2n}$ it  is given by the matrix
$$
J=\left(
           \begin{array}{cc}
             0 & -I_n \\
             I_n & 0 \\
           \end{array}
         \right)$$
where $I_n$ denotes the identity matrix of order $n$.
We also use ${\rm GL}(n)$ and ${\rm O}(n)$ to denote the set of invertible real matrix and
 orthogonal real matrix of order $n$, respectively.

 For a convex body $K\subset \mathbb{R}^{2n}$  containing $0$ in its interior, let
 \begin{equation}\label{e:Minkowski}
j_K:\mathbb{R}^{2n}\to\mathbb{R},\quad  j_K(z)=\inf\left\{\lambda>0\;\Big|\; \frac{z}{\lambda}\in K\right\}
 \end{equation}
 be the Minkowski functional of $K$ and let
 $$
 h_K:\mathbb{R}^{2n}\to\mathbb{R},\quad h_K(z)=\sup\{\langle x,z\rangle\,|\,x\in K\}
 $$
 be the  support function of $K$. The polar body of $K$  is defined by  $K^{\circ}=\{x\in\mathbb{R}^{2n}\,|\, \langle x,y\rangle\le 1\;\forall y\in K\}$. Then $h_K=j_{K^{\circ}}$ (\cite[Theorem~1.7.6]{Sch93}).
For two convex bodies $D, K\subset \mathbb{R}^{2n}$ containing  $0$ in their interiors
 and a real number $p\ge 1$,  there exists a unique {convex body}
 $D+_pK\subset\mathbb{R}^{2n}$ with support function
$$\mathbb{R}^{2n}\ni w\mapsto h_{D+_pK}(w)=(h^p_{D}(w)+h_{K}^p(w))^{\frac{1}{p}}$$
  (\cite[Theorem~1.7.1]{Sch93}). $D+_pK$ is called
   the $p$-sum of $D$ and $K$ by Firey (cf. \cite[(6.8.2)]{Sch93}).

  For any two convex bodies  $D, K\subset \mathbb{R}^{2n}$  containing  $0$ in their interiors,
  Artstein-Avidan and Ostrover  \cite{AAO08} proved that their Ekeland-Hofer-Zehnder
symplectic capacities satisfy the following Brunn-Minkowski type inequality
     \begin{equation}\label{e:BrunA0}
   \left(c_{\rm EHZ}(D+_pK)\right)^{\frac{p}{2}}\ge \left(c_{\rm EHZ}(D)\right)^{\frac{p}{2}}+ \left(c_{\rm EHZ}(K)\right)^{\frac{p}{2}},
   \quad  p\in\mathbb{R}\;\&\; p\ge 1.
   \end{equation}
As applications, Artstein-Avidan and Ostrover \cite{AAO14} used them  to derive several very interesting bounds and inequalities
for the length of the shortest periodic billiard trajectory in a smooth convex body in $\mathbb{R}^n$.

Recently, we established extended versions of  Ekeland-Hofer and Hofer-Zehnder
symplectic capacities in \cite{JinLu}
\footnote{The preprint was split into two papers, which were submitted independently. The present paper is one of them, mainly consisting of
contents in Sections~8,~9 of \cite{JinLu}.}, which are not symplectic capacities in general.
For the reader's convenience, we recall the definition of the extended Hofer-Zehnder symplectic capacities with respect to symplectomorphisms on symplectic manifolds  (Definition~\ref{def: ehz}) and also some related properties in Section~\ref{sec:capacity}.
In particular, for given $\Psi\in{\rm Sp}(2n,\mathbb{R})$ and $B\subset\mathbb{R}^{2n}$ such that $B\cap {\rm Fix}(\Psi)\neq \emptyset$,
 we  constructed the extended versions of  Ekeland-Hofer capacity $c_{\rm EH}(B)$ and Hofer-Zehnder capacity $c_{\rm HZ}(B)$
 relative to $\Psi$,  denoted respectively by
 $$
 c^{\Psi}_{\rm EH}(B) \quad \hbox{and} \quad c^{\Psi}_{\rm HZ}(B).
 $$
 If $\Psi=I_{2n}$, then $c^{\Psi}_{\rm EH}(B)=c_{\rm EH}(B)$  and $c^{\Psi}_{\rm HZ}(B)=c_{\rm HZ}(B)$.
 As the Ekeland-Hofer and Hofer-Zehnder symplectic capacities,
 $c^{\Psi}_{\rm EH}$  and $c^{\Psi}_{\rm HZ}$ agree on any convex body $D\subset\mathbb{R}^{2n}$.
 In this case we denote
 $$
 c^\Psi_{\rm EHZ}(D):=c^\Psi_{\rm HZ}(D,\omega_0)(=c^\Psi_{\rm EH}(D))
 $$
 and refer to it as extended Ekeland-Hofer-Zehnder capacity of $D$.
Because of these, it is natural to  generalize work by Artstein-Avidan and Ostrover  \cite{AAO08} and \cite{AAO14}.
The precise versions will be stated in the following two subsections, respectively.

\subsection{A Brunn-Minkowski type inequality for $c^\Psi_{\rm EHZ}$-capacity of convex bodies}\label{sec:1.AAO}

%

Here is the first main result of this paper.

\begin{thm}\label{th:Brun}
   Let $D, K\subset \mathbb{R}^{2n}$ be two convex bodies  containing  $0$ in their interiors. Then for any $\Psi\in{\rm Sp}(2n,\mathbb{R})$ and any real $p\ge 1$ it holds that
   \begin{equation}\label{e:BrunA}
   \left(c^{\Psi}_{\rm EHZ}(D+_pK)\right)^{\frac{p}{2}}\ge \left(c^{\Psi}_{\rm EHZ}(D)\right)^{\frac{p}{2}}+ \left(c^{\Psi}_{\rm EHZ}(K)\right)^{\frac{p}{2}}.
   \end{equation}
   Moreover,  the equality in (\ref{e:BrunA}) holds if  $D$ and $K$ satisfy the condition:
   \begin{equation}\label{e:BrunAA}
 \left.\begin{array}{ll}
 &\hbox{There exist $c^\Psi_{\rm EHZ}$-carriers for $D$ and $K$,
      $\gamma_D:[0,T]\rightarrow \partial D$ and}\\
     & \hbox{$\gamma_K:[0,T]\rightarrow \partial K$, such that
      they coincide up to  dilation and}\\
      &\hbox{translation by elements in ${\rm Ker}(\Psi-I_{2n})$, i.e.,
   $\gamma_D=\alpha\gamma_K+ {\bf b}$}\\
  &\hbox{ for some $\alpha\in\mathbb{R}\setminus\{0\}$ and ${\bf b}\in{\rm Ker}(\Psi-I_{2n})\subset\mathbb{R}^{2n}$.}
   \end{array}\right\}
      \end{equation}
When $p>1$ the condition (\ref{e:BrunAA}) is also necessary for the equality in (\ref{e:BrunA}) holding.
  \end{thm}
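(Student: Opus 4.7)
The plan is to follow the strategy of Artstein-Avidan and Ostrover \cite{AAO08}, adapted to the $\Psi$-twisted boundary value setting. The key analytic input is a Clarke-dual variational characterization of $c^\Psi_{\rm EHZ}(D)$ for a convex body $D$ with $0\in{\rm Int}(D)$, established in \cite{JinLu} and summarized in Section~\ref{sec:capacity}: concretely, $c^\Psi_{\rm EHZ}(D)$ is realized as the minimum of an integral functional of the form $\mathrm{const}\cdot\int h_D(-J\dot\gamma)^2\,dt$ (or a homogeneous variant thereof) over an affine class $\mathcal{A}_\Psi$ of absolutely continuous curves $\gamma:[0,1]\to\mathbb{R}^{2n}$ satisfying a $\Psi$-twisted boundary condition --- most naturally $\gamma(1)-\Psi\gamma(0)\in{\rm Ker}(\Psi-I_{2n})$ --- together with a symplectic-action normalization; the minimum is attained by the $c^\Psi_{\rm EHZ}$-carriers of Definition~\ref{def: ehz}.

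Granted this, (\ref{e:BrunA}) follows the AAO blueprint. The class $\mathcal{A}_\Psi$ depends only on $\Psi$ and not on the convex body, so a $c^\Psi_{\rm EHZ}$-carrier for $D+_pK$ is simultaneously an admissible test curve for the $D$- and $K$-minimization problems. The Firey $p$-sum identity $h^p_{D+_pK}=h^p_D+h^p_K$ expresses the objective for $D+_pK$ along such a carrier pointwise in terms of $h_D$ and $h_K$, and the convex-geometric $L^p$-Minkowski step of \cite{AAO08} then combines the two contributions to produce a lower bound matching the right-hand side of (\ref{e:BrunA}) after clearing normalization constants.

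For the equality analysis, the decisive observation is that translation by ${\bf b}\in{\rm Ker}(\Psi-I_{2n})$ preserves both the twisted boundary condition (since $\Psi{\bf b}={\bf b}$) and the symplectic action $A(\gamma)=\tfrac12\int\langle J\dot\gamma,\gamma\rangle\,dt$. Hence if $\gamma_K$ is a carrier for $K$ and (\ref{e:BrunAA}) holds, then $\alpha\gamma_K+{\bf b}$ is a carrier for $D$ realizing equality in the Minkowski step along $\gamma_K$, which proves sufficiency. Conversely, for $p>1$ the Minkowski step is strict unless the quantities built from $h_D$ and $h_K$ along the joint carrier are proportional, and by Fenchel duality (support functions determine their convex bodies, and outward normals along the characteristic orbits must coincide) this forces $\gamma_D$ and $\gamma_K$ to agree up to a positive dilation and a translation, with the translation constrained by the twisted boundary condition to lie in ${\rm Ker}(\Psi-I_{2n})$.

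The main technical obstacle lies in the careful setup of the Clarke-dual variational principle in a form that is both amenable to the $L^p$-Minkowski combination and sharp enough for the equality analysis, and in verifying that the translational freedom preserving admissibility of the test curves is precisely ${\rm Ker}(\Psi-I_{2n})$. This is exactly what gives (\ref{e:BrunAA}) its particular form and distinguishes the present result from the periodic case, where all translations are allowed in the equality characterization. Once this setup is in place, the remainder of the argument is a direct transcription of the AAO strategy.
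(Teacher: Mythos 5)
Your outline of the inequality itself and of the sufficiency of (\ref{e:BrunAA}) matches the paper's argument: a Clarke-dual representation of $(c^{\Psi}_{\rm EHZ}(D))^{p/2}$ as $\min_{x\in\mathcal{A}_{p_1}}\frac{1}{2^{p}}\int_0^1 (h_D(-J\dot x))^{p}\,dt$ over a body-independent admissible class, the Firey identity $h^p_{D+_pK}=h^p_D+h^p_K$, and then $\min(f+g)\ge\min f+\min g$; for sufficiency, a common carrier produces a single test curve at which both minima are attained. Two cautions on the setup: the paper's admissible class is $\{x(1)=\Psi x(0),\ x(0)\in E_1^{\bot}\}$ (the orthogonality condition is the gauge-fixing that makes the minimization coercive; the translational freedom by ${\rm Ker}(\Psi-I_{2n})$ then reappears only in the reconstruction of carriers), and the general-exponent representation formula is \emph{not} taken from \cite{JinLu} — it is proved in this paper (Lemma~\ref{prop:Brun.1} and Proposition~\ref{prop:Brun.2}), and notably one needs the formula for the exponent-$p_2$ functional minimized over the exponent-$p_1$ class with $p_1>1$, $p_2\ge 1$, which is exactly what makes the case $p=1$ of (\ref{e:BrunA}) accessible.

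The genuine gap is in your necessity argument for $p>1$. Because the Firey identity makes the integrand split \emph{exactly} for every $p\ge1$, there is no strict ``$L^p$-Minkowski step'' whose equality case forces proportionality of the quantities built from $h_D$ and $h_K$ along the orbit; equality in (\ref{e:BrunA}) yields only a common minimizer $u\in\mathcal{A}_{p_1}$ of the two exponent-$p$ functionals. The actual content of the necessity proof — and the only place where $p>1$ enters — is the upgrade of $u$ to a common minimizer of the exponent-$p_1$ functionals for an auxiliary $1<p_1<p$, done via H\"older's inequality together with the fact that the minima at all exponents equal $2(c^{\Psi}_{\rm EHZ})^{1/2}$; this is needed because the Lagrange-multiplier/Legendre-transform machinery that reconstructs a carrier from a minimizer, in the explicit form $\gamma_D=(c^\Psi_{\rm EHZ}(D))^{1/2}u+\tfrac{2}{p_1}(c^\Psi_{\rm EHZ}(D))^{(1-p_1)/2}{\bf a}_D$ with ${\bf a}_D\in{\rm Ker}(\Psi-I_{2n})$ (Remark~\ref{rem:Brun.1.5}(i)), is only available for exponents strictly greater than $1$. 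Once both carriers are reconstructed from the \emph{same} $u$, they coincide up to dilation and translation by elements of ${\rm Ker}(\Psi-I_{2n})$ automatically; no Fenchel-duality or proportionality argument is used or needed. As written, your appeal to ``strictness unless proportional'' misidentifies why $p>1$ is required and leaves the carrier-reconstruction step — which is where the specific form of (\ref{e:BrunAA}) comes from — unaddressed.
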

Readers can refer to Definition~\ref{def: carrier} for the concept of $c^\Psi_{\rm EHZ}$-carriers for a convex body.
Theorem~\ref{th:Brun} has some interesting corollaries, see Section~\ref{sec:Brunn.2}.

\subsection{Length estimate for a class of non-periodic billiard trajectories in convex domains}\label{sec:1.3}

Using the inequality (\ref{e:BrunA0}) and its corollaries Artstein-Avidan and Ostrover \cite{AAO14}
studied  the length estimates of the shortest periodic billiard trajectory in a smooth convex body in $\mathbb{R}^n$
and obtained some very interesting results. Since the Ekeland-Hofer capacity of a smooth convex body
$D\subset\mathbb{R}^{2n}$
is equal to the minimum of absolute values of actions of closed characteristics on the boundary $\partial D$,
and we generalized this relation to our extended  Ekeland-Hofer-Zehnder capacity $c^\Psi_{\rm EHZ}(D)$ and
$\Psi$-characteristics on $\partial D$ in \cite{JinLu},  it is natural using Theorem~\ref{th:Brun} or Corollaries~\ref{cor:Brun.1}, \ref{cor:Brun.2}
to study corresponding conclusions for some non-periodic billiard trajectory in a smooth convex body in $\mathbb{R}^n$,
which motivates the following definitions.



\begin{definition}\label{def:billT.1}
{\rm For a convex body $\Omega\subset\mathbb{R}^n$ with boundary $\partial\Omega$
of class $C^2$ and $A\in{\rm O}(n)$,
a nonconstant, continuous, and piecewise $C^\infty$ path $\sigma:[0,T]\to\overline{\Omega}$ with $\sigma(T)=A\sigma(0)$
is called an \textsf{$A$-billiard trajectory} in  $\Omega$ if there exists a finite set
$\mathscr{B}_\sigma\subset (0, T)$
such that $\ddot{\sigma}\equiv 0$ on $(0, T)\setminus\mathscr{B}_\sigma$ and the following conditions
are also satisfied:
\begin{description}
\item[(ABi)] $\sharp\mathscr{B}_\sigma\ge 1 $  and $\sigma(t)\in\partial\Omega\;\forall t\in \mathscr{B}_\sigma$.
\item[(ABii)] For each $t\in \mathscr{B}_\sigma$, $\dot{\sigma}^\pm(t):=\lim_{\tau\to t\pm}\dot{\sigma}(\tau)$
fulfils the equation
\begin{equation}\label{e:bill1}
\dot{\sigma}^+(t)+\dot{\sigma}^-(t)\in T_{\sigma(t)}\partial\Omega,\quad
\dot{\sigma}^+(t)-\dot{\sigma}^-(t)\in (T_{\sigma(t)}\partial\Omega)^\bot\setminus\{0\}.
\end{equation}
(So
$|\dot{\sigma}^+(t)|^2-|\dot{\sigma}^-(t)|^2=\langle\dot{\sigma}^+(t)+\dot{\sigma}^-(t), \dot{\sigma}^+(t)-\dot{\sigma}^-(t)\rangle_{\mathbb{R}^n}=0$ for each $t\in \mathscr{B}_\sigma$,
that is, $|\dot{\sigma}|$ is constant  on $(0, T)\setminus\mathscr{B}_\sigma$.)
Let
\begin{equation}\label{e:bill2}
\dot{\sigma}^+(0)=\lim_{t\to0+}\dot{\sigma}(t)\quad\hbox{and}\quad
\dot{\sigma}^-(T)=\lim_{t\to T-}\dot{\sigma}(t).
\end{equation}
If $\sigma(0)\in\partial\Omega$ (resp. $\sigma(T)\in\partial\Omega$)
let $\dot{{\sigma}}^-(0)$ (resp. $\dot{{\sigma}}^+(T)$) be the unique vector satisfying
 \begin{equation}\label{e:bill3}
\dot{\sigma}^+(0)+\dot{{\sigma}}^-(0)\in T_{\sigma(0)}\partial\Omega,\quad
\dot{\sigma}^+(0)-\dot{{\sigma}}^-(0)\in (T_{\sigma(0)}\partial\Omega)^\bot
\end{equation}
(resp.
\begin{equation}\label{e:bill4}
\dot{{\sigma}}^+(T)+\dot{\sigma}^-(T)\in T_{\sigma(T)}\partial\Omega,\quad
\dot{{\sigma}}^+(T)-\dot{\sigma}^-(T)\in (T_{\sigma(T)}\partial\Omega)^\bot.\quad{\rm )}
\end{equation}

\item[(ABiii)] If $\{\sigma(0), \sigma(T)\}\in {\rm int} \Omega$ then
  \begin{equation}\label{e:bill5}
A\dot{\sigma}^+(0)=\dot{\sigma}^-(T).
\end{equation}
\item[(ABiv)] If $\sigma(0)\in\partial\Omega$ and $\sigma(T)\in{\rm int}\Omega$, then
either (\ref{e:bill5}) holds, or
\begin{equation}\label{e:bill6}
A\dot{{\sigma}}^-(0)=\dot{\sigma}^-(T).
\end{equation}
\item[(ABv)]
If $\sigma(0)\in{\rm int}\Omega$ and $\sigma(T)\in\partial\Omega$, then
either (\ref{e:bill5}) holds, or
\begin{equation}\label{e:bill7}
A\dot{\sigma}^+(0)=\dot{{\sigma}}^+(T).
\end{equation}
\item[(ABvi)]
If $\{\sigma(0), \sigma(T)\}\in\partial\Omega$, then
either (\ref{e:bill5}) or (\ref{e:bill6}) or (\ref{e:bill7}) holds, or
\begin{equation}\label{e:bill8}
A\dot{{\sigma}}^-(0)=\dot{{\sigma}}^+(T).
\end{equation}
\end{description}}
\end{definition}

\begin{remark}\label{rm:billT.1}
{\rm
\begin{description}
  \item[(i)]For each $t\in \mathcal{B}_\sigma$, (\ref{e:bill1}) is a reflection condition which describes the motion of a billiard when arriving at the boundary of the billiard table.
  \item[(ii)]Roughly speaking, $A$-billiard trajectory requires a billiard  trajectory to satisfy boundary conditions for starting position and ending position, as well as for starting velocity and ending velocity. If $A=I_n$, an $A$-billiard trajectory becomes periodic (or closed). In this case,
      $\sigma(T)=\sigma(0)$ and (ABiv) and (ABv) do not occur. If (ABiii) holds then all bounce times of this periodic billiard trajectory  $\sigma$
consist of elements of $\mathscr{B}_\sigma$. If $\sigma(0)=\sigma(T)\in\partial\Omega$ and either (\ref{e:bill5}) or
(\ref{e:bill8}) holds then
the periodic billiard trajectory  $\sigma$ is tangent to
$\partial\Omega$ at $\sigma(0)$, and so the set of its bounce times is also $\mathscr{B}_\sigma$.
When $\sigma(0)=\sigma(T)\in\partial\Omega$ and either (\ref{e:bill6}) or (\ref{e:bill7}) holds,
it follows from (\ref{e:bill3})-(\ref{e:bill4}) that
$$
\dot{\sigma}^+(0)+\dot{{\sigma}}^-(T)\in T_{\sigma(0)}\partial\Omega\quad\hbox{and}\quad
\dot{\sigma}^+(0)-\dot{{\sigma}}^-(T)\in (T_{\sigma(0)}\partial\Omega)^\bot.
$$
When $\dot{\sigma}^+(0)-\dot{{\sigma}}^-(T)= 0$, the set of all bounce times of this periodic billiard trajectory  $\sigma$
is $\mathscr{B}_\sigma$. When $\dot{\sigma}^+(0)-\dot{{\sigma}}^-(T)= \neq 0$ , the set of all bounce times of this periodic billiard trajectory  $\sigma$
is $\mathscr{B}_\sigma\cup\{0\}=\mathscr{B}_\sigma\cup\{T\}$ (because
$0$ and $T$ are identified).
\item[(iii)]If $A\ne I_n$, an $A$-billiard trajectory in  $\Omega$ might not be periodic
even if $\sigma(0)=\sigma(T)$ since the starting velocity and ending velocity may not satisfy the condition for periodic billiard trajectory.
  \end{description}}
\end{remark}

The existence of  $A$-billiard trajectories in  $\Omega$ will be studied in other places.

Definition \ref{def:billT.1} can be generalized to convex domain with non-smooth boundary.
Recall that for a convex body $\Delta\in\mathbb{R}^n$ and $q\in\partial \Delta$
$$N_{\partial \Delta}(q)=\{y\in\mathbb{R}^{2n}\,|\, \langle u-q, y\rangle\le 0\;\forall u\in \Delta\}$$
    is the normal cone to $\Delta$ at $q\in\partial \Delta$. $y\in N_{\partial \Delta}(q)$  is called
an \textsf{outward support vector} of $\Delta$ at $q\in\partial\Delta$ .
It is unique if $q$ is a smooth point of $\partial\Delta$.
Corresponding to the generalized periodic billiard trajectory introduced by Ghomi \cite{Gh04}, we have
the following generalized version of the billiard trajectory in Definition~\ref{def:billT.1}.

\begin{definition}\label{def:billT.2}
{\rm
For a  convex body in $\Delta\subset\mathbb{R}^n$ and $A\in{\rm O}(n)$,
 a \textsf{generalized $A$-billiard trajectory} in  $\Delta$ is  defined to be
 a finite sequence of points in  $\Delta$
$$
q=q_0,q_1,\cdots,q_m=Aq
$$
 with the following properties:
\begin{description}
\item[(AGBi)] $m\ge 2$ and $\{q_1,\cdots,q_{m-1}\}\subset\partial \Delta$.
\item[(AGBii)] Both $q_0,\cdots,q_{m-1}$ and $q_1,\cdots,q_m$ are sequences of distinct points.
\item[(AGBiii)] For every $i=1,\cdots,m-1$,
$$
\nu_i:=\frac{q_i-q_{i-1}}{\|q_i-q_{i-1}\|}+ \frac{q_{i}-q_{i+1}}{\|q_{i}-q_{i+1}\|}
$$
is an outward support vector of $\Delta$ at $q_i$.
\item[(AGBiv)] If $\{q, Aq\}\subset{\rm int}(\Delta)$ then 
\begin{equation}\label{e:bill10}
\frac{A(q_1-q_{0})}{\|q_1-q_{0}\|}=\frac{q_m-q_{m-1}}{\|q_m-q_{m-1}\|}.
\end{equation}
\item[(AGBv)] If $q\in\partial\Delta$ and $Aq\in{\rm int}(\Delta)$, then
 either (\ref{e:bill10}) holds or there exists a unit vector $b_0\in\mathbb{R}^n$ such that
 \begin{equation}\label{e:bill11}
 \nu_0:=b_0-\frac{q_1-q_{0}}{\|q_1-q_{0}\|}\in N_{\partial \Delta}(q) \quad{\rm and}\quad Ab_0=\frac{q_m-q_{m-1}}{\|q_m-q_{m-1}\|}.
 \end{equation}

\item[(AGBvi)] If $q\in{\rm int}(\Delta)$ and $Aq\in\partial\Delta$, then 
either (\ref{e:bill10}) holds or there exists a unit vector $b_m\in\mathbb{R}^n$ such that
\begin{equation}\label{e:bill12}
\nu_m:=\frac{q_m-q_{m-1}}{\|q_m-q_{m-1}\|}-b_m\in N_{\partial \Delta}(Aq)\quad{\rm and}\quad
\frac{A(q_1-q_{0})}{\|q_1-q_{0}\|}=b_m.
\end{equation}
\item[(AGBvii)] If $\{q,Aq\}\subset\partial\Delta$, then 
either (\ref{e:bill10}) or (\ref{e:bill11}) or (\ref{e:bill12}) holds, or
there exist unit vectors $b'_0, b'_m\in\mathbb{R}^n$
such that
\begin{equation}\label{e:bill9}
\nu_0:=b'_0-\frac{q_1-q_{0}}{\|q_1-q_{0}\|}\in N_{\partial \Delta}(q),\quad
\nu_m:=\frac{q_m-q_{m-1}}{\|q_m-q_{m-1}\|}-b'_m\in N_{\partial \Delta}(Aq)
\quad\hbox{and}\quad
Ab'_0=b'_m.
\end{equation}
\end{description}}
\end{definition}

\begin{remark}\label{rm:billT.2}
{\rm
\begin{description}
  \item[(i)]It is easily checked that a generalized $I_n$-billiard trajectory in  $\Delta$
is exactly a generalized periodic billiard trajectory in the sense of \cite{Gh04}.
\item[(ii)]For a smooth convex body in $\Delta\subset\mathbb{R}^n$
and $A\in{\rm O}(n)$, a nonconstant, continuous,
and piecewise $C^\infty$ path $\sigma:[0,T]\to \Delta$ with $\sigma(T)=A\sigma(0)$
is an  $A$-billiard trajectory in  $\Delta$ with $\mathscr{B}_\sigma=\{t_1<\cdots<t_{m-1}\}$
if and only if the sequence
$$
q_0=\sigma(0), q_1=\sigma(t_1),\cdots,
q_{m-1}=\sigma(t_{m-1}), q_m=\sigma(T)
$$
is a generalized $A$-billiard trajectory in  $\Delta$.
\end{description}}
\end{remark}

In order to study $A$-billiard via extended Ekeland-Hofer-Zehnder capacity, we will define $(A, \Delta, \Lambda)$-\textsf{billiard trajectory} for
$A\in{\rm GL}(n)$ and convex domians $\Delta\subset\mathbb{R}^n_q$ and $\Lambda\subset\mathbb{R}^n_p$, following the idea in \cite{AAO14} which defines closed $(\Delta,\Lambda)$-billiard trajectory.

Suppose that
 {\bf $\Delta\subset\mathbb{R}^n_q$ and $\Lambda\subset\mathbb{R}^n_p$
 are two smooth convex bodies containing the origin in their interiors}. Then
 $\Delta\times \Lambda$ is a smooth manifold with corners $\partial \Delta\times\partial \Lambda$
 in the standard symplectic space
 $(\mathbb{R}^{2n},\omega_0)=(\mathbb{R}^n_q\times \mathbb{R}^n_p, dq\wedge dp)$.
 Note that $\partial(\Delta\times\Lambda)=(\partial\Delta\times\partial\Lambda)\cup
({\rm Int}(\Delta)\times\partial\Lambda)\cup
(\partial\Delta\times{\rm Int}(\Lambda))$.
 Since
$j_{\Delta\times \Lambda}(q,p)=\max\{j_\Delta(q), j_\Lambda(p)\}$, we have
$$
\nabla j_{\Delta\times \Lambda}(q,p)=\left\{
\begin{array}{cc}
 (0,\nabla j_\Lambda(p))  &\forall (q,p)\in{\rm Int}(\Delta)\times\partial \Lambda, \\
  (\nabla j_\Delta(q),0) &\forall (q,p)\in \partial \Delta\times{\rm Int}(\Lambda).
\end{array}
\right.
$$
Moreover, for $(q,p)\in\partial\Delta\times\partial\Lambda$ there holds
\begin{eqnarray*}
N_{\partial (\Delta\times\Lambda)}(q,p)&=&\{(y_1,y_2)\;|\;y_1\in N_{\partial\Delta}(q),\; y_2\in N_{\partial \Lambda}(p)\}\\
&=&\{\mu(\nabla j_\Delta(q),0)+\lambda (0,\nabla j_\Lambda (p))\;|\;\lambda\ge 0,\;\mu\ge 0\}.
\end{eqnarray*}
Define
$$
\mathfrak{X}(q,p):=J\nabla j_{\Delta\times \Lambda}(q,p)=\left\{
\begin{array}{cc}
 (-\nabla j_\Lambda(p),0)  &\forall (q,p)\in{\rm Int}(\Delta)\times\partial \Lambda, \\
  (0, \nabla j_\Delta(q)) &\forall (q,p)\in \partial \Delta\times{\rm Int}(\Lambda).
\end{array}
\right.
$$
It is well-known that every $A\in{\rm GL}(n)$ induces  a natural linear symplectomorphism
\begin{equation}\label{e:linesympl}
\Psi_A:\mathbb{R}^n_q\times \mathbb{R}^n_p\to \mathbb{R}^n_q\times
\mathbb{R}^n_p,\;(q, v)\mapsto (Aq, (A^t)^{-1}v),
\end{equation}
where $A^t$ is the transpose of $A$.

\begin{definition}\label{def:billT.3}
{\rm Let $A\in{\rm GL}(n)$, and let $\Delta\subset\mathbb{R}^n_q$ and $\Lambda\subset\mathbb{R}^n_p$
 be two smooth convex bodies containing the origin in their interiors. A continuous and piecewise smooth map $\gamma:[0, T]\to
\partial(\Delta\times\Lambda)$ with $\gamma(T)=\Psi_A\gamma(0)$ is called an  $(A, \Delta, \Lambda)$-\textsf{billiard trajectory} if
\begin{description}
\item[(BT1)] for some positive constant $\kappa$ it holds that $\dot{\gamma}(t)=\kappa\mathfrak{X}(\gamma(t))$ on $[0, T]\setminus\gamma^{-1}(\partial\Delta\times\partial\Lambda)$;
\item[(BT2)] $\gamma$ has
a right derivative $\dot{\gamma}^+(t)$ at any $t\in \gamma^{-1}(\partial\Delta\times\partial\Lambda)\setminus\{T\}$
 and a left derivative $\dot{\gamma}^-(t)$ at any $t\in \gamma^{-1}(\partial\Delta\times\partial\Lambda)\setminus\{0\}$,
 and  $\dot{\gamma}^\pm(t)$ belong to
    \begin{equation}\label{e:linesymp3}
   \{-\lambda(\nabla j_\Lambda(\gamma_p(t)),0)+\mu(0,\nabla j_\Delta(\gamma_q(t)))\,|\,
    \lambda\ge 0, \;\mu\ge 0,\;(\lambda,\mu)\ne (0,0)\}
    \end{equation}
    with $\gamma(t)=(\gamma_q(t),\gamma_p(t))$.
\end{description}}
\end{definition}

\begin{remark}\label{rem:ADL-BT}
{\rm
\begin{description}
  \item[(i)] 
Every $(A, \Delta, \Lambda)$-billiard trajectory is a generalized $\Psi_A$-characteristic on $\partial(\Delta\times\Lambda)$ in the sense of Definition~\ref{def:character}(ii).
 In fact, we only need to note that for $(q,p)\in \partial \Delta\times {\rm Int} (\Lambda)\cup (\rm Int)\Delta\times \partial \Lambda$ there holds
 $$\mathfrak{X}(q,p)=J\nabla j_{\Delta\times \Lambda}(q,p)$$
 and for $(q,p)\in \partial \Delta\times \partial\Lambda$ there holds
  $$JN_{\partial (\Delta\times\Lambda)}=\{-\lambda(\nabla j_\Lambda(\gamma_p(t)),0)+\mu(0,\nabla j_\Delta(\gamma_q(t)))\,|\,
    \lambda\ge 0, \;\mu\ge 0,\;(\lambda,\mu)\ne (0,0)\}.$$
    \item[(ii)]For a given $A\in{\rm GL}(n)$, we can generalize Definition~\ref{def:billT.3} to smooth convex bodies
    $\Delta\subset\mathbb{R}^n_q$ and $\Lambda\subset\mathbb{R}^n_p$  satisfying
\begin{equation}\label{e:linesymp2}
{\rm Fix}(A)\cap{\rm Int}(\Delta)\ne\emptyset\quad\hbox{and}\quad
{\rm Fix}(A^t)\cap{\rm Int}(\Lambda)\ne\emptyset,
\end{equation}
(which not necessarily contain the origin in their interiors). In this case,  a continuous and piecewise smooth map $\gamma:[0, T]\to
\partial(\Delta\times\Lambda)$ is said to be an  $(A, \Delta, \Lambda)$-\textsf{billiard trajectory} if there exists
$\bar{q}\in{\rm Fix}(A)\cap{\rm Int}(\Delta)$ and $\bar{p}\in {\rm Fix}(A^t)\cap{\rm Int}(\Lambda)$
such that $\gamma-(\bar{q},\bar{p})$ is an $(A, \Delta-\bar{q}, \Lambda-\bar{p})$-billiard trajectory in the sense of Definition~\ref{def:billT.3}. (Here $\gamma-(\bar{q},\bar{p})$
 is the composition of $\gamma$ and the affine linear symplectomorphism
\begin{equation}\label{e:affine}
\Phi_{(\bar{q},\bar{p})}:\mathbb{R}^n_q\times \mathbb{R}^n_p\to \mathbb{R}^n_q\times
\mathbb{R}^n_p,\;(u, v)\mapsto (u-\bar{q}, v-\bar{p}),
\end{equation}
which commutes with  $\Psi_A$.) The condition (\ref{e:linesymp2}) insures that
$$
{\rm Int}(\Delta\times\Lambda)\cap{\rm Fix}(\Psi_A)\neq\emptyset
$$
so that $c^{\Psi_A}_{\rm EHZ}(\Delta\times\Lambda)$ is well defined and we can associate the lengths of $(A, \Delta,\Lambda)$-billiard trajectories with it. 
\end{description}}
\end{remark}

 Corresponding to the classification for closed $(\Delta,\Lambda)$-trajectories in \cite{AAO14}
we introduce: 

\begin{definition}
 {\rm Let $A$, $\Delta$ and $\Lambda$ satisfy (\ref{e:linesymp2}).
 An  $(A, \Delta, \Lambda)$-billiard trajectory is called \textsf{proper} (resp. \textsf{gliding}) if $\gamma^{-1}(\partial\Delta\times\partial\Lambda)$ is a finite set (resp. $\gamma^{-1}(\partial\Delta\times\partial\Lambda)=[0,T]$,
i.e., $\gamma([0,T])\subset\partial\Delta\times\partial\Lambda$ completely).}
\end{definition}

For  $A\in{\rm GL}(n,\mathbb{R}^n)$ and  convex bodies
 $\Delta\subset\mathbb{R}^n_q$ and $\Lambda\subset\mathbb{R}^n_p$
  satisfying (\ref{e:linesymp2}), we define
\begin{equation}\label{e:linesymp5}
\xi^A_\Lambda(\Delta)=c^{\Psi_A}_{\rm EHZ}(\Delta\times\Lambda)\quad\hbox{and}\quad
\xi^A(\Delta)=c^{\Psi_A}_{\rm EHZ}(\Delta\times B^n).
\end{equation}
If $A=I_n$ then $\xi^A(\Delta)$ becomes  $\xi(\Delta)$ defined in \cite[page 177]{AAO14}.
Clearly, $\xi^A_{\Lambda_1}(\Delta_1)\le \xi^A_{\Lambda_2}(\Delta_2)$ if
both are well-defined and $\Lambda_1\subset\Lambda_2$ and $\Delta_1\subset\Delta_2$.

In Section~\ref{sec:BillT-1}, based on studies on the above
several classes of billiard trajectories we show in Proposition~\ref{prop:billT.6}
 that $\xi^A(\Delta)$ provides a positive lower bound
 for infimum of length of $A$-billiard trajectories in $\Delta$.
 Therefore it is important to study properties of $\xi^A(\Delta)$ and more general $\xi^A_\Lambda(\Delta)$.
As in the proof of \cite[Theorem~1.1]{AAO14} using Corollary~\ref{cor:Brun.1} we may derive the following Brunn-Minkowski type
inequality for $\xi^A_\Lambda$, which is the second main result of this paper.

\begin{thm}\label{th:billT.2}
 For $A\in{\rm GL}(n)$, suppose that convex bodies
  $\Delta_1, \Delta_2\subset\mathbb{R}^n_q$ and $\Lambda\subset\mathbb{R}^n_p$
 satisfy  ${\rm Int}(\Delta_1)\cap{\rm Fix}(A)\ne\emptyset$,
 ${\rm Int}(\Delta_2)\cap{\rm Fix}(A)\ne\emptyset$
 and ${\rm Int}(\Lambda)\cap{\rm Fix}(A^t)\ne\emptyset$.    Then
\begin{equation}\label{e:linesymp6}
 \xi^A_\Lambda(\Delta_1+\Delta_2)\ge \xi^A_\Lambda(\Delta_1)+ \xi^A_\Lambda(\Delta_2)
   \end{equation}
   and the equality holds if  there exist
    $c^{\Psi_A}_{\rm EHZ}$-carriers for $\Delta_1\times \Lambda$ and $\Delta_2\times \Lambda$ which coincide up to dilation and translation
    by elements in ${\rm Ker}(\Psi_A-I_{2n})$.
\end{thm}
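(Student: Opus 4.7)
The plan is to reduce the inequality (\ref{e:linesymp6}) to the $p=1$ case of the Brunn--Minkowski inequality in Theorem~\ref{th:Brun} (equivalently Corollary~\ref{cor:Brun.1}), applied to the products $\Delta_i\times\lambda_i\Lambda$ with suitably chosen scaling parameters $\lambda_i>0$, and then to optimise over $(\lambda_1,\lambda_2)$. This mirrors the strategy used by Artstein-Avidan and Ostrover to prove \cite[Theorem~1.1]{AAO14}.

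First I would reduce to the situation in which $0$ lies in the interior of all bodies in question, so that Theorem~\ref{th:Brun} is applicable. Since ${\rm Fix}(A)$ and ${\rm Fix}(A^t)$ are linear subspaces, I pick $\bar q_i\in{\rm Fix}(A)\cap{\rm Int}(\Delta_i)$ and $\bar p\in{\rm Fix}(A^t)\cap{\rm Int}(\Lambda)$ and work with the translates $\Delta'_i:=\Delta_i-\bar q_i$ and $\Lambda':=\Lambda-\bar p$. The affine translation $\Phi_{(\bar q_i,\bar p)}$ of (\ref{e:affine}) commutes with $\Psi_A$ and therefore preserves $c^{\Psi_A}_{\rm EHZ}$; moreover $\bar q_1+\bar q_2\in{\rm Fix}(A)\cap{\rm Int}(\Delta_1+\Delta_2)$ so the same holds for the sum. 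Hence each occurrence of $\xi^A_\Lambda$ can be rewritten in terms of the primed bodies without changing its value.

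Next I would establish the scaling law $c^{\Psi_A}_{\rm EHZ}(\Delta\times t\Lambda)=t\cdot c^{\Psi_A}_{\rm EHZ}(\Delta\times \Lambda)$ for $t>0$: the linear symplectomorphism $\Phi_t(q,v)=(t^{-1/2}q,t^{1/2}v)$ commutes with $\Psi_A$, and combining its capacity invariance with the conformality $c(\alpha K)=\alpha^2 c(K)$ applied with $\alpha=t^{-1/2}$ to $\Delta\times t\Lambda$ yields the claim. Applying Theorem~\ref{th:Brun} with $p=1$ to $D_i:=\Delta'_i\times\lambda_i\Lambda'$, and using $\lambda_1\Lambda'+\lambda_2\Lambda'=(\lambda_1+\lambda_2)\Lambda'$ together with the scaling law, I obtain
\begin{equation*}
\sqrt{(\lambda_1+\lambda_2)\,\xi^A_\Lambda(\Delta_1+\Delta_2)}\;\ge\;\sqrt{\lambda_1\,\xi^A_\Lambda(\Delta_1)}+\sqrt{\lambda_2\,\xi^A_\Lambda(\Delta_2)}.
\end{equation*}
Setting $\lambda_i:=\xi^A_\Lambda(\Delta_i)$ (the degenerate cases are immediate) and squaring produces (\ref{e:linesymp6}). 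For the equality statement I would argue that with this optimal choice of $\lambda_i$ the equality condition in Theorem~\ref{th:Brun} applied to $D_1,D_2$ transfers, via the symplectomorphisms $\Phi_{(\bar q_i,\bar p)}$ and $\Phi_{t}$ used above, to the stated coincidence of carriers of $\Delta_1\times\Lambda$ and $\Delta_2\times\Lambda$ up to dilation and a translation by an element of ${\rm Ker}(\Psi_A-I_{2n})={\rm Fix}(A)\times{\rm Fix}(A^t)$.

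The main obstacle I anticipate is verifying the scaling law for the \emph{extended} capacity $c^{\Psi_A}_{\rm EHZ}$ and then carefully tracking the effect of the composed symplectomorphisms on $c^{\Psi_A}_{\rm EHZ}$-carriers, so that the equality clause of Theorem~\ref{th:Brun} — which concerns carriers of $D_1$ and $D_2$ — cleanly descends to the desired statement about carriers of $\Delta_i\times\Lambda$.
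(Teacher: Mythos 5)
Your proposal is correct and follows essentially the same route as the paper: both reduce (\ref{e:linesymp6}) to the $p=1$ Brunn--Minkowski inequality of Theorem~\ref{th:Brun}/Corollary~\ref{cor:Brun.1} applied to product bodies whose Minkowski sum is again a product with the same (up to scale) $\Lambda$-factor, and then exploit the homogeneity $c^{\Psi_A}_{\rm EHZ}$ under factor-wise rescaling to optimise the scaling parameters; the paper scales both factors by $\lambda$, $1-\lambda$ and inserts an arithmetic--geometric mean step before choosing $\lambda$ as in (\ref{e:BillT.5}), whereas you scale only the $\Lambda$-factor and optimise directly, which is the same computation in a slightly cleaner form. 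The equality clause is treated at the same level of detail as in the paper (which simply cites Corollary~\ref{cor:Brun.1}), so no gap relative to the paper's own argument.
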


 When $\Lambda=B^n$ and $A=I_{n}$,
this result was first proved in \cite{AAO14}, and Irie also gave a new proof in \cite{Ir15}.


In order to estimate $\xi^A(\Delta)$, 
 for a symplectic  matrix  $\Psi\in{\rm Sp}(2n,\mathbb{R})$ we define
  \begin{equation}\label{e:TPsi0}
      g^{\Psi}:\mathbb{R}\rightarrow \mathbb{R}, \,s\mapsto \det (\Psi-e^{sJ}),
   \end{equation}
  where  $e^{tJ}=\sum^\infty_{k=0}\frac{1}{k!}t^k J^k$.
 The set of zeros of  $g^{\Psi}$ in $(0, 2\pi]$
 is a nonempty finite set (\cite[Lemma~A.1]{JinLu}) and
 \begin{equation}\label{e:TPsi}
\mathfrak{t}(\Psi):=\min\{t\in (0, 2\pi]\,|\, g^{\Psi}(t)=0\}=2c^{\Psi}_{\rm EHZ}(B^{2n})
 \end{equation}
 by \cite[(1.28)]{JinLu}. In particular, if $\Psi=I_{2n}$ then $\mathfrak{t}(\Psi)=2\pi$ (\cite[Lemma~A.1]{JinLu}) and (\ref{e:TPsi}) becomes
 $c_{\rm EHZ}(B^{2n})=\pi$.
 Since $\Psi_A={\rm diag}(A, (A^t)^{-1})$ for $A\in GL(n)$, by \cite[Lemma~A.5]{JinLu}, $ \mathfrak{t}(\Psi_A)$ is equal to
 the smallest zero in $(0, 2\pi]$ of the function
 \begin{equation}\label{e:TPsi1}
\mathbb{R}\rightarrow \mathbb{R},\;\,s\mapsto\det(I_n+ (A^t)^{-1}A-\cos s (A+ (A^t)^{-1})).
\end{equation}
(It must exist!) Moreover, if $A$ is an orthogonal matrix similar to one of form
\cite[(A.2)]{JinLu}, i.e.,
$$
A=
{\rm diag}\left(  \left(
            \begin{array}{cc}
              \cos\theta_1 & \sin\theta_1 \\
              -\sin\theta_1 & \cos\theta_1\\
            \end{array}
          \right),\cdots,
    \left(
            \begin{array}{cc}
              \cos\theta_m & \sin\theta_m \\
              -\sin\theta_m & \cos\theta_m\\
            \end{array}
          \right), I_k, -I_l\right),
$$
where $2m+k+l=n$ and $0<\theta_1\le\cdots\le\theta_m<\pi$, then
\begin{equation}\label{e:TPsi2}
 \mathfrak{t}(\Psi_A)=\left\{
\begin{array}{lll}
 \theta_1  &\hbox{if}\;m>0, \\
 \pi &\hbox{if\; $m=0$ and $l>0$},\\
 2\pi &\hbox{if\; $m=l=0$}.
\end{array}
\right.
\end{equation}

 The \textsf{width} of a convex body $\Delta\subset\mathbb{R}^n_q$ is the thickness of the
narrowest slab which contains $\Delta$, i.e.,
${\rm width}(\Delta)=\min\{h_\Delta(u)+ h_\Delta(-u)\,|\, u\in S^n\}$,
where $S^n=\{u\in\mathbb{R}^n\;|\;\|u\|=1\}$.
Let
\begin{eqnarray}\label{e:linesymp12}
&&S^n_\Delta:=\{u\in S^n\,|\, {\rm width}(\Delta)=h_\Delta(u)+ h_\Delta(-u)\},\\
&&H_u:=\{x\in\mathbb{R}^n\,|\,\langle x, u\rangle=(h_\Delta(u)- h_\Delta(-u))/2\},\label{e:linesymp13}\\
&&Z^{2n}_\Delta:=([-{\rm width}(\Delta)/2, {\rm width}(\Delta)/2]\times\mathbb{R}^{n-1})\times ([-1,1]\times\mathbb{R}^{n-1}).\label{e:linesymp14}
\end{eqnarray}


\begin{proposition}\label{prop:add}
 Let $A\in{\rm GL}(n)$ and a convex body $\Delta\subset\mathbb{R}^n_q$
satisfy ${\rm Fix}(A)\cap{\rm Int}(\Delta)\ne\emptyset$.
\begin{description}
\item[(i)] If  $\Delta$ contains a ball $B^{n}(\bar{q},r)$ with $A\bar{q}=\bar{q}$,
then 
\begin{equation}\label{e:linesymp8}
\xi^A(\Delta)\ge rc^{\Psi_A}_{\rm EHZ}(B^n\times B^n,\omega_0)\ge \frac{r \mathfrak{t}(\Psi_A)}{2}.
\end{equation}

\item[(ii)]  For any $u\in S^n_\Delta$,
 $\bar{q}\in H_u$ and any ${\bf O}\in O(n)$ such that ${\bf O}u=e_1=(1,0,\cdots,0)\in\mathbb{R}^n$
 let
\begin{equation}\label{e:linesymp16}
\Psi_{{\bf O}, \bar{q}}:\mathbb{R}^n_q\times \mathbb{R}^n_p\to \mathbb{R}^n_q\times
\mathbb{R}^n_p,\;(q, v)\mapsto ({\bf O}(q-\bar{q}), {\bf O}v),
\end{equation}
that is, the composition of translation $(q,v)\mapsto (q-\bar{q},v)$
and $\Psi_{\bf O}$ defined by (\ref{e:linesympl}), then
\begin{eqnarray}\label{e:linesymp17}
\xi^A(\Delta)\le  c_{\rm EHZ}^{\Psi_{{\bf O}, \bar{q}}\Psi_A\Psi_{{\bf O}, \bar{q}}^{-1}}(Z^{2n}_\Delta, \omega_0).
\end{eqnarray}
Moreover,  the right-side is equal to $c_{\rm EHZ}^{\Psi_{{\bf O}}\Psi_A\Psi_{{\bf O}}^{-1}}(Z^{2n}_\Delta, \omega_0)$ if $A\bar{q}=\bar{q}$, and to
$c_{\rm EHZ}^{\Psi_A}(Z^{2n}_\Delta, \omega_0)$ if $A\bar{q}=\bar{q}$ and $A{\bf O}={\bf O}A$.
\end{description}
\end{proposition}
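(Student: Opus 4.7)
My plan is to read off both parts from three properties of the extended capacity $c^{\Psi}_{\rm EHZ}$ recalled in Section~\ref{sec:capacity}: (a) monotonicity under inclusion, (b) the scaling rule $c^{\Psi}_{\rm EHZ}(\lambda D,\omega_0)=\lambda^2 c^{\Psi}_{\rm EHZ}(D,\omega_0)$ for $\lambda>0$, and (c) the invariance $c^{\Phi\Psi\Phi^{-1}}_{\rm EHZ}(\Phi(D),\omega_0)=c^{\Psi}_{\rm EHZ}(D,\omega_0)$ for any affine symplectomorphism $\Phi$; the value on the unbounded slab $Z^{2n}_\Delta$ is to be taken as the limit along an exhaustion by bounded convex bodies.

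For part (i), I would begin from
\[
\xi^A(\Delta)=c^{\Psi_A}_{\rm EHZ}(\Delta\times B^n,\omega_0)\ge c^{\Psi_A}_{\rm EHZ}(B^n(\bar{q},r)\times B^n,\omega_0)
\]
by (a). Because $A\bar{q}=\bar{q}$ gives $(\bar{q},0)\in{\rm Ker}(\Psi_A-I_{2n})$, the translation $(q,p)\mapsto(q-\bar{q},p)$ commutes with $\Psi_A$, and (c) reduces the right-hand side to $c^{\Psi_A}_{\rm EHZ}(B^n(0,r)\times B^n,\omega_0)$. Since $\Psi_A={\rm diag}(A,(A^t)^{-1})$ is block-diagonal, the symplectic rescaling $(q,p)\mapsto(q/\sqrt{r},\sqrt{r}\,p)$ commutes with $\Psi_A$, so (c) combined with (b) yields
\[
c^{\Psi_A}_{\rm EHZ}(B^n(0,r)\times B^n,\omega_0)=c^{\Psi_A}_{\rm EHZ}(\sqrt{r}\,B^n\times\sqrt{r}\,B^n,\omega_0)=r\,c^{\Psi_A}_{\rm EHZ}(B^n\times B^n,\omega_0).
\]
The second inequality in (\ref{e:linesymp8}) is then immediate from $B^n\times B^n\supset B^{2n}$, monotonicity, and (\ref{e:TPsi}).

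For part (ii), the geometric input is the inclusion $\Psi_{{\bf O},\bar{q}}(\Delta\times B^n)\subset Z^{2n}_\Delta$. For the $p$-factor this is clear since ${\bf O}(B^n)=B^n\subset [-1,1]\times\mathbb{R}^{n-1}$. For the $q$-factor, for every $q\in\Delta$ one checks
\[
\langle{\bf O}(q-\bar{q}),e_1\rangle=\langle q-\bar{q},u\rangle=\langle q,u\rangle-\frac{h_\Delta(u)-h_\Delta(-u)}{2}\in\Bigl[-\tfrac{{\rm width}(\Delta)}{2},\tfrac{{\rm width}(\Delta)}{2}\Bigr],
\]
using $\bar{q}\in H_u$, the support-function bound $\langle q,u\rangle\in[-h_\Delta(-u),h_\Delta(u)]$, and $u\in S^n_\Delta$, i.e.\ $h_\Delta(u)+h_\Delta(-u)={\rm width}(\Delta)$. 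Applying (c) to the affine symplectomorphism $\Psi_{{\bf O},\bar{q}}$ and then (a) to the above inclusion gives (\ref{e:linesymp17}). For the refined equalities, write $\Psi_{{\bf O},\bar{q}}=\Psi_{\bf O}\circ\Phi_{(\bar{q},0)}$: when $A\bar{q}=\bar{q}$ the translation $\Phi_{(\bar{q},0)}$ commutes with $\Psi_A$ and the conjugation reduces to $\Psi_{\bf O}\Psi_A\Psi_{\bf O}^{-1}$; if in addition $A{\bf O}={\bf O}A$, then transposing this relation and using ${\bf O}^t={\bf O}^{-1}$ shows ${\bf O}$ commutes with $(A^t)^{-1}$ as well, so $\Psi_{\bf O}$ and $\Psi_A$ commute and the conjugation is trivial.

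The only genuine obstacle is conceptual rather than computational: one must confirm that the axioms (a)--(c) of $c^{\Psi}_{\rm EHZ}$ survive passage to the unbounded slab $Z^{2n}_\Delta$ via exhaustion, and that the commutation relations between the rescaling/translation maps and $\Psi_A$ are recorded exactly as claimed, so that (c) produces the symplectic matrices written in the statement. Once these are in hand, the rest is the elementary geometric calculation above.
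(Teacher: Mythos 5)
Your proposal is correct and follows essentially the same route as the paper: monotonicity plus the commuting translation and the rescaling $(q,p)\mapsto(q/\sqrt{r},\sqrt{r}\,p)$ for part (i), and the inclusion $\Psi_{{\bf O},\bar q}(\Delta\times B^n)\subset Z^{2n}_\Delta$ combined with the conjugation invariance (\ref{e:inv}) for part (ii); you even supply the support-function verification of the inclusion and the ``Moreover'' commutation argument, which the paper leaves implicit. Your only worry --- defining the capacity of the unbounded slab by exhaustion --- is unnecessary, since Definition~\ref{def: ehz} applies directly to arbitrary open subsets.
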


By Proposition~\ref{prop:billT.6} and (\ref{e:linesymp8}) we immediately
 get our third main result. 

\begin{thm}\label{th:billT.3}
For $A\in{\rm O}(n)$ and a smooth convex body $\Delta\subset\mathbb{R}^n_q$
with ${\rm Fix}(A)\cap{\rm Int}(\Delta)\ne\emptyset$, 
if  $\Delta$ contains a ball $B^{n}(\bar{q},r)$ with $A\bar{q}=\bar{q}$ then it holds that
\begin{equation}\label{e:linesymp8.1}
 \frac{r \mathfrak{t}(\Psi_A)}{2}\le \inf\{L(\sigma)\,|\,\hbox{$\sigma$ is an  $A$-billiard trajectory  in $\Delta$}\}.
\end{equation}
\end{thm}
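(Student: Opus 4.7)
The plan is to obtain Theorem~\ref{th:billT.3} as a direct consequence of two earlier results already at our disposal, namely the capacity--length lower bound of Proposition~\ref{prop:billT.6} and the capacity estimate~(\ref{e:linesymp8}) of Proposition~\ref{prop:add}(i). No genuinely new argument is needed; the content amounts to chaining two inequalities.

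First I would observe that under the hypotheses $A\in{\rm O}(n)$ and ${\rm Fix}(A)\cap{\rm Int}(\Delta)\ne\emptyset$, together with the trivial inclusion $0\in{\rm Fix}(A^t)\cap{\rm Int}(B^n)$, the compatibility condition~(\ref{e:linesymp2}) is satisfied by the pair $(\Delta, B^n)$, so $\xi^A(\Delta)=c^{\Psi_A}_{\rm EHZ}(\Delta\times B^n)$ is well defined. Proposition~\ref{prop:billT.6} then yields
$$
\xi^A(\Delta)\le \inf\{L(\sigma)\,|\,\hbox{$\sigma$ is an $A$-billiard trajectory in $\Delta$}\}.
$$
Since the ball $B^n(\bar{q},r)$ sits inside $\Delta$ with $A\bar{q}=\bar{q}$, Proposition~\ref{prop:add}(i) applies and, in the form of~(\ref{e:linesymp8}), gives
$$
\xi^A(\Delta)\ge r\,c^{\Psi_A}_{\rm EHZ}(B^n\times B^n,\omega_0)\ge \frac{r\,\mathfrak{t}(\Psi_A)}{2}.
$$
Concatenating these two displays produces exactly the stated bound.

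The theorem itself is therefore an immediate corollary, and the real work lies upstream. The principal ingredient is Proposition~\ref{prop:billT.6}, which interprets $\xi^A(\Delta)$ as the action of a generalized $\Psi_A$-characteristic on $\partial(\Delta\times B^n)$ and, via the corner analysis encoded in Definition~\ref{def:billT.3} and Remark~\ref{rem:ADL-BT}(i), identifies the $q$-projection of such a characteristic with an $A$-billiard trajectory whose Euclidean length equals that action. The secondary ingredient, Proposition~\ref{prop:add}(i), reduces the case of a ball of radius $r$ centred at a fixed point $\bar{q}$ of $A$ to the unit-ball case by means of the symplectic rescaling $(q,p)\mapsto (q/r,\,rp)$ (which commutes with the block-diagonal $\Psi_A={\rm diag}(A,(A^t)^{-1})$), combined with the identity $\mathfrak{t}(\Psi_A)=2c^{\Psi_A}_{\rm EHZ}(B^{2n})$ from~(\ref{e:TPsi}) and monotonicity under the inclusion $B^{2n}\subset B^n\times B^n$. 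Once these are in place, no further obstacle remains.
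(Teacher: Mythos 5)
Your proof is correct and is exactly the paper's argument: the authors obtain Theorem~\ref{th:billT.3} by the same two-line chaining of Proposition~\ref{prop:billT.6} with inequality~(\ref{e:linesymp8}) from Proposition~\ref{prop:add}(i). (One cosmetic quibble in your description of the upstream ingredient: the paper's rescaling in the proof of Proposition~\ref{prop:add}(i) is $(q,p)\mapsto(q/\sqrt{r},\sqrt{r}p)$ onto $B^n(0,\sqrt{r})\times B^n(0,\sqrt{r})$ followed by conformality, not $(q,p)\mapsto(q/r,rp)$, but this does not affect your proof of the theorem, which only cites that proposition.)
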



{Recall that the \textsf{inradius} of a convex body $\Delta\subset\mathbb{R}^n_q$
 is the radius of the largest ball contained in $\Delta$, i.e.,}
 ${\rm inradius}(\Delta)=\sup_{x\in \Delta}{\rm dist}(x,\partial \Delta)$.
 For any centrally symmetric convex body $\Delta\subset\mathbb{R}^n_q$,  Artstein-Avidan,  Karasev, and Ostrover recently proved in \cite[Theorem~1.7]{AAKO14}:
 \begin{equation}\label{e:linesymp9}
 c_{\rm HZ}(\Delta\times \Delta^\circ,\omega_0)=4.
 \end{equation}
 As a consequence of this and (\ref{e:linesymp8.1}) we obtain: 

 \begin{corollary}[Ghomi \cite{Gh04}]\label{cor:billT.1}
Every periodic billiard trajectory $\sigma$ in a centrally symmetric convex body $\Delta\subset\mathbb{R}^n_q$
has length $L(\sigma)\ge 4\, {\rm inradius}(\Delta)$.
\end{corollary}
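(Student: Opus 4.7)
The plan is to bypass the weaker bound $r\,\mathfrak{t}(\Psi_{I_n})/2 = r\pi$ that appears in the right-most inequality of (\ref{e:linesymp8}), and instead to exploit the sharper intermediate estimate
$$
\xi^{I_n}(\Delta) \;\ge\; r\cdot c_{\rm HZ}(B^n\times B^n,\omega_0)
$$
from Proposition~\ref{prop:add}(i), feeding into it the Artstein-Avidan--Karasev--Ostrover identity (\ref{e:linesymp9}) applied to the unit ball (which is its own polar).

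First I would reduce to the smooth case: since $\Delta$ is centrally symmetric, translate so that $\Delta = -\Delta$ and $0 \in \Int(\Delta)$, so that the largest inscribed ball is $B^n(\bar q, r)$ with $\bar q = 0$ and $r = \mathrm{inradius}(\Delta)$. When $\partial\Delta$ is of class $C^2$, any periodic billiard trajectory $\sigma$ in $\Delta$ is an $I_n$-billiard trajectory in the sense of Definition~\ref{def:billT.1}, so Proposition~\ref{prop:billT.6} yields
$$
L(\sigma)\;\ge\;\xi^{I_n}(\Delta)\;=\;c_{\rm HZ}(\Delta\times B^n,\omega_0).
$$
Next, Proposition~\ref{prop:add}(i) applied with $A=I_n$ and $\bar q=0$ gives $\xi^{I_n}(\Delta)\ge r\,c_{\rm HZ}(B^n\times B^n,\omega_0)$. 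Since $B^n$ is centrally symmetric with $(B^n)^\circ = B^n$, the equality (\ref{e:linesymp9}) of Artstein-Avidan--Karasev--Ostrover forces
$$
c_{\rm HZ}(B^n\times B^n,\omega_0)\;=\;c_{\rm HZ}(B^n\times(B^n)^\circ,\omega_0)\;=\;4.
$$
Chaining the three inequalities gives $L(\sigma)\ge 4r = 4\,\mathrm{inradius}(\Delta)$, which is the desired bound for smooth $\Delta$.

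For a general centrally symmetric convex body $\Delta$ and a generalized periodic billiard trajectory in the sense of Ghomi \cite{Gh04}, approximate $\Delta$ from outside by a decreasing sequence $\Delta_k\downarrow\Delta$ of centrally symmetric smooth convex bodies with $\mathrm{inradius}(\Delta_k)\to\mathrm{inradius}(\Delta)$, perturb the bounce points of $\sigma$ to produce nearby trajectories in $\Delta_k$ whose lengths converge to $L(\sigma)$, apply the smooth estimate to each, and pass to the limit. \textbf{The main obstacle} I expect is precisely this last limiting step: one must verify that a generalized periodic billiard trajectory of length $L(\sigma)$ in $\Delta$ can be approximated by genuine $C^\infty$ periodic billiards in $\Delta_k$ with $\limsup_k L(\sigma_k) \le L(\sigma)$, since otherwise the passage from the smooth estimate to Ghomi's generalized setting is not automatic. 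Remark~\ref{rm:billT.2}(ii) and the reflection identities in Definition~\ref{def:billT.2} should make this perturbation standard, but it is the only nontrivial ingredient not already packaged in Proposition~\ref{prop:billT.6}, Proposition~\ref{prop:add}(i), and (\ref{e:linesymp9}).
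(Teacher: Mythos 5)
Your smooth-case argument is exactly the paper's: both proofs chain Proposition~\ref{prop:billT.6}, the first inequality of (\ref{e:linesymp8}) with $A=I_n$ and $\bar q=0$ (legitimate, since for a centrally symmetric body the largest inscribed ball can be taken centered at the symmetry center), and the identity (\ref{e:linesymp9}) applied to $B^n=(B^n)^\circ$ to get $c_{\rm HZ}(B^n\times B^n,\omega_0)=4$, hence $L(\sigma)\ge\xi(\Delta)\ge 4\,{\rm inradius}(\Delta)$. Up to this point there is nothing to correct.

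Where you diverge from the paper is the reduction of the general (non-smooth) case to the smooth one, and the step you yourself flag as the main obstacle is a genuine gap: approximating $\Delta$ by a \emph{decreasing} sequence $\Delta_k\downarrow\Delta$ and perturbing the bounce points to produce genuine billiard trajectories $\sigma_k$ in $\Delta_k$ with $\limsup_k L(\sigma_k)\le L(\sigma)$ is not routine. At a non-smooth bounce point the reflection law of Definition~\ref{def:billT.2} only requires $\nu_i$ to lie in the normal cone $N_{\partial\Delta}(q_i)$, and a nearby smooth body has a single normal at each boundary point; there is no automatic way to make these match while simultaneously controlling the lengths and the inradii, and you do not carry this out. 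The paper avoids the issue entirely with a cheaper device: fix the given trajectory $\sigma$ and choose \emph{one} smooth convex body $\Delta^\ast\supseteq\Delta$ whose boundary passes through the finitely many bounce points $q_i$ with outward normal there equal to the chosen support vector $\nu_i$ of $\Delta$; then $\sigma$ is verbatim a periodic billiard trajectory in $\Delta^\ast$, so the smooth case gives $L(\sigma)\ge\xi(\Delta^\ast)$, and monotonicity of $c_{\rm HZ}$ (applied to $\Delta\times B^n\subset\Delta^\ast\times B^n$) gives $\xi(\Delta^\ast)\ge\xi(\Delta)\ge 4\,{\rm inradius}(\Delta)$. Note that this chain never needs ${\rm inradius}(\Delta^\ast)$ to be close to ${\rm inradius}(\Delta)$, nor any convergence of trajectories --- the inradius bound is only ever invoked for $\Delta$ itself. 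I would replace your limiting argument by this outer-envelope trick; as written, your proof of the general case is incomplete.
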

 \begin{proof}
  Since  $c^{\Psi_A}_{\rm HZ}=c_{\rm HZ}$ for $A=I_n$,  from the first inequality in (\ref{e:linesymp8}) and (\ref{e:linesymp9})  we deduce
 \begin{equation}\label{e:linesymp10}
 \xi(\Delta):=\xi^{I_n}(\Delta)\ge 4\, {\rm inradius}(\Delta).
 \end{equation}
 When $\Delta$ is smooth, since $\xi(\Delta)$ is equal to the length of the shortest
 periodic billiard trajectory  in $\Delta$ (see the bottom of \cite[page 177]{AAO14}), we get $L(\sigma)\ge 4\, {\rm inradius}(\Delta)$.
 (In this case another new proof of \cite[Theorem~1.2]{Gh04} was also given by
Irie \cite[Theorem~1.9]{Ir15}.) For general case we may approximate $\Delta$
by a smooth convex body $\Delta^\ast\supseteq\Delta$ such that $\sigma$ is
also periodic billiard trajectory $\Delta^\ast$. Thus $L(\sigma)\ge\xi(\Delta^\ast)\ge
  \xi(\Delta)\ge 4\, {\rm inradius}(\Delta)$ because of monotonicity of $c_{\rm HZ}$.
   \end{proof}

\begin{remark}\label{rem:billT.1}
{\rm
\begin{description}
 \item[(i)]
 Corollary~\ref{cor:billT.1} only partially recover \cite[Theorem~1.2]{Gh04} by Ghomi.
 \cite[Theorem~1.2]{Gh04}  did not require $\Delta$ to be centrally symmetric. It also stated that
 $L(\sigma)=4\, {\rm inradius}(\Delta)$ for some $\sigma$ if and only if
 ${\rm width}(\Delta)=4\, {\rm inradius}(\Delta)$.

 \item[(ii)]
 When $A=I_n$ we may take $r={\rm inradius}(\Delta)$ in
 (\ref{e:linesymp8.1}), and get a weaker result than Corollary~\ref{cor:billT.1}:
  $L(\sigma)\ge \pi{\rm inradius}(\Delta)$ for every periodic billiard trajectory $\sigma$ in $\Delta$.

\item[(iii)]
In order to get a corresponding result for each $A$-billiard trajectory in $\Delta$ as in Corollary~\ref{cor:billT.1},
an analogue of (\ref{e:linesymp10}) is needed. Hence we expect
that (\ref{e:linesymp9}) has the following generalization:
\begin{equation}\label{e:linesymp11}
 c^{\Psi_A}_{\rm EHZ}(\Delta\times \Delta^\circ)=\frac{2}{\pi} \mathfrak{t}(\Psi_A).
 \end{equation}
 \end{description}}
\end{remark}

For a bounded domain $\Omega\subset\mathbb{R}^n$ with smooth boundary,
there exist positive constants $C_n$, $C_n'$ only depending on $n$, $C$ independent of $n$, and
(possibly different) periodic billiard trajectories $\gamma_1$, $\gamma_2$, $\gamma_3$  in $\Omega$ such that their length satistfies
\begin{eqnarray}\label{e:linesymp18}
&&L(\gamma_1)\le C_n{\rm Vol}(\Omega)^{\frac{1}{n}}\quad\hbox{(Viterbo \cite{Vi00})},\\
&&L(\gamma_2)\le C{\rm diam}(\Omega)\quad\hbox{(Albers and Mazzucchelli \cite{AlMaz10})},\label{e:linesymp19}\\
&&L(\gamma_3)\le C_n'{\rm inradius}(\Omega)\quad\hbox{(Irie \cite{Ir12})},\label{e:linesymp20}
\end{eqnarray}
where ${\rm inradius}(\Omega)$ is the inradius of $\Omega$, i.e.,
 the radius of the largest ball contained in $\Omega$.
 If $\Omega$ is a smooth convex body $\Delta\subset\mathbb{R}^n_q$, Artstein-Avidan and Ostrover \cite{AAO14}
recently obtained the following more concrete  estimates than (\ref{e:linesymp20}) and (\ref{e:linesymp18}):
 \begin{eqnarray}\label{e:linesymp21}
&&\xi(\Delta)\le 2(n+1){\rm inradius}(\Delta),\\
&&\xi(\Delta)\le C'\sqrt{n}{\rm Vol}(\Delta)^{\frac{1}{n}},\label{e:linesymp22}
\end{eqnarray}
where $C'$ is a positive constant independent of $n$.

\begin{remark}\label{rem:billT.2}
{\rm
 Since $c^{\Psi_A}_{\rm HZ}=c_{\rm HZ}$ for $A=I_n$, from (\ref{e:linesymp17}) we  recover
(\ref{e:linesymp21}) as follows
\begin{eqnarray*}
\xi(\Delta)=\xi^{I_n}(\Delta)\le c_{\rm HZ}(Z^{2n}_\Delta, \omega_0)
=2{\rm width}(\Delta)\le 2(n+1){\rm inradius}(\Delta)
\end{eqnarray*}
because ${\rm width}(\Delta)\le (n+1){\rm inradius}(\Delta)$
by \cite[(1.2)]{Sch09}. 
}
 \end{remark}

Finally, we have an improvement for (\ref{e:linesymp19}) in the case that $\Omega$ is a smooth convex body.

\begin{theorem}\label{th:billT.4}
For a smooth convex body $\Delta\subset\mathbb{R}^n_q$,
suppose that periodic billiard trajectories in $\Delta$ include  projections to $\Delta$
  of periodic gliding billiard trajectories in $\Delta\times B^n$. Then
 $$
 L(\sigma)\le \pi{\rm diam}(\Delta)
 $$
for some periodic billiard trajectory $\sigma$ in $\Delta$.
\end{theorem}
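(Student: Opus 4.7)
The plan is to reduce the theorem to the existence of a short closed geodesic on $\partial\Delta$. The key correspondence is that, via the substitution $p=-\dot\alpha$, closed gliding $(I_n,\Delta,B^n)$-billiard trajectories in the sense of Definition~\ref{def:billT.3} are in one-to-one bijection (up to reparametrization) with closed geodesics $\alpha$ on $\partial\Delta$ at constant unit speed, and the length of the $q$-projection of the trajectory equals $L(\alpha)$. Indeed, for $\alpha:[0,L]\to\partial\Delta$ with $|\dot\alpha|\equiv 1$, set $\gamma(t)=(\alpha(t),-\dot\alpha(t))\in\partial\Delta\times\partial B^n$. Since $\nabla j_{B^n}(p)=p$ when $|p|=1$, the cone condition (\ref{e:linesymp3}) reduces to $\dot q=-p$ (automatic from our choice) together with $-\ddot\alpha\in\mathbb{R}_{\ge 0}\,\nabla j_\Delta(\alpha)$; the latter is equivalent to $\alpha$ being a geodesic on $\partial\Delta$ (the tangential component of $\ddot\alpha$ vanishes), with the nonnegative coefficient automatically supplied by the convexity of $\Delta$.

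Given this correspondence, it suffices to exhibit a closed geodesic $\alpha_0$ on $\partial\Delta$ with $L(\alpha_0)\le\pi\,{\rm diam}(\Delta)$. Choose $x_{\pm}\in\partial\Delta$ realizing the diameter, set $d:={\rm diam}(\Delta)=|x_{+}-x_{-}|$, and let $\ell$ be the affine line through $x_{\pm}$. For each $2$-plane $\Pi_\theta$ containing $\ell$ (parametrized by $\theta\in S^{n-2}$, with $S^{0}$ understood as a point when $n=2$), the slice $C_\theta:=\partial\Delta\cap\Pi_\theta$ is a closed planar convex curve through $x_{\pm}$, and the classical perimeter-versus-diameter inequality for planar convex bodies yields $L(C_\theta)\le\pi\,{\rm diam}(\Pi_\theta\cap\Delta)\le\pi d$. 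When $n=2$ we simply take $\alpha_0=\partial\Delta$. When $n\ge 3$, the family $\{C_\theta\}_{\theta\in S^{n-2}}$ sweeps $\partial\Delta\cong S^{n-1}$ in a homotopically nontrivial manner, and a Birkhoff-type min-max applied to a nontrivial $1$-parameter sub-sweepout produces a nonconstant closed geodesic $\alpha_0$ on $\partial\Delta$ with $L(\alpha_0)\le\sup_\theta L(C_\theta)\le\pi d$.

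Combining the two steps, the closed gliding trajectory $\gamma_0$ associated to $\alpha_0$ has $q$-projection $\sigma$ with $L(\sigma)=L(\alpha_0)\le\pi\,{\rm diam}(\Delta)$. By the hypothesis of the theorem, $\sigma$ counts as a periodic billiard trajectory in $\Delta$, completing the proof. The main technical obstacle is the Birkhoff min-max step in the $n\ge 3$ case: one must ensure that curve-shortening applied to the sweepout $\{C_\theta\}$ yields an honest smooth nonconstant closed geodesic rather than a trivial or broken limit, which requires careful treatment of the two "polar" values of $\theta$ where $C_\theta$ degenerates toward the diameter chord $[x_{-},x_{+}]$; for smoothly and strictly convex $\partial\Delta$ this is standard Birkhoff theory on convex hypersurfaces, but the control of the sweepout near the poles is the delicate point.
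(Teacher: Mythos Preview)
Your approach is genuinely different from the paper's, and the min-max step has a real gap in dimensions $n\ge 4$.

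The paper's proof is two lines and stays entirely within the symplectic-capacity framework built earlier. Quoting \cite{AAO14}, under the hypothesis of the theorem one has $\xi(\Delta)=L(\sigma)$ for some periodic billiard trajectory $\sigma$; then Lemma~\ref{lem:add}, which bounds $\xi^{I_n}(\Delta)=c_{\rm EHZ}(\Delta\times B^n)$ via the inclusion $\Delta\subset\overline{B^n(\bar q,R)}$ and a short chain of capacity computations, is invoked to conclude $\xi(\Delta)\le\pi\,{\rm diam}(\Delta)$. No geodesic on $\partial\Delta$ is ever constructed, and the argument is dimension-independent.

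Your route---identify closed gliding $(I_n,\Delta,B^n)$-trajectories with unit-speed closed geodesics on $\partial\Delta$, then exhibit a short closed geodesic directly---is a legitimate alternative idea, and the correspondence in your first paragraph is correct. The $n=2$ case and the planar perimeter bound $L(C_\theta)\le\pi d$ are also fine. The problem is the Birkhoff step. For $n=3$ your family of planar sections through the diameter consists of ``meridians'' all passing through the two points $x_\pm$, not the standard Birkhoff sweepout by curves degenerating to points at the endpoints of the parameter interval; this can be made rigorous, but it takes more than you have written. For $n\ge 4$ the argument fails outright: a one-parameter family of closed curves cannot sweep out $\partial\Delta\cong S^{n-1}$ when $n-1\ge 3$, so there is no ``nontrivial $1$-parameter sub-sweepout'' in any sense that yields a min-max closed geodesic. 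Extracting a closed geodesic with the required length bound from the full $(n-2)$-parameter family $\{C_\theta\}_{\theta\in\mathbb{RP}^{n-2}}$ would require an Almgren--Pitts type min-max for $1$-cycles, which is far beyond ``standard Birkhoff theory on convex hypersurfaces''. The paper's capacity estimate sidesteps this dimensional difficulty entirely.
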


\noindent{\bf Organization of the paper}. 
  Section~\ref{sec:Brunn} proves Theorem~\ref{th:Brun} and Corollaries~\ref{cor:Brun.1}, \ref{cor:Brun.2}.
In Section~\ref{sec:BillT-1} we give the classification of $(A, \Delta, \Lambda)$-billiard trajectories  and
studied related properties of proper trajectories.
Theorems~\ref{th:billT.2},~\ref{th:billT.4} and Proposition~\ref{prop:add} will be proved In Section~\ref{sec:BillT-2}.



\noindent{\bf Acknowledgments}. We are deeply grateful to the
anonymous referees for giving very helpful comments and suggestions to improve the
exposition.


%
%

%

\section{The extended Hofer-Zehnder symplectic capacities}\label{sec:capacity}
\setcounter{equation}{0}


For convenience we review the extended Hofer-Zehnder symplectic capacities and related results in \cite{JinLu}.
Given a symplectic manifold $(M,\omega)$ and a symplectomorphism $\Psi\in{\rm Symp}(M,\omega)$, let $O\subset M$ be an open subset such that
$O\cap {\rm Fix}(\Psi)\neq\emptyset$. Denote by $\mathcal{H}^\Psi(O,\omega)$ the set of smooth functions $H \colon O\to\R$ satisfying
\begin{description}
\item[(i)]there exists a nonempty open subset $U\subset O$ (depending on $H$) such that $U\cap{\rm Fix}(\Psi)\neq\emptyset$ and $H|_U=0$,
\item[(ii)]there exists a compact subset $K\subset O\setminus\partial O$ (depending on $H$) such that $H|_{O\setminus K}=m(H):=\max H$,
\item[(iii)]  $0\leq H\leq m(H)$.
\end{description}
Denote by $X_H$ the Hamiltonian vector field defined by {$\omega(X_H, \cdot)=-dH$}.
Note that for $H\in\mathcal{H}^\Psi(O,\omega)$, the condition $U\cap {\rm Fix}(\Psi)\neq \emptyset$  ensures that there exists a constant solution
to the Hamiltonian boundary value problem
\begin{equation}\label{bvp}
   \left\{
   \begin{array}{l}
     \dot{x}=X_H(x), \\
     x(T)=\Psi x(0).
   \end{array}
   \right.
\end{equation}
We call $H\in\mathcal{H}^\Psi(O,\omega)$
$\Psi$-\textsf{admissible} if all solutions $x:[0, T]\to O$ to the Hamiltonian boundary value problem (\ref{bvp})
with $0<T\le 1$ are constant.
The set of all such $\Psi$-admissible Hamiltonians is denoted by
 $\mathcal{H}_{ad}^{\Psi}(O,\omega)$. In \cite{JinLu} we defined the following analogue (or extended version) of the Hofer-Zehnder capacity of $(O, \omega)$.
 \begin{definition}\label{def: ehz}
 For open subset $O$ in symplectic manifold $(M,\omega)$ and symplectomorphism $\Psi\in{\rm Symp}(M,\omega)$, define
$$\displaystyle{c^\Psi_{\rm HZ}}(O,\omega)=\sup \{\max H\,|\, H\in \mathcal{H}_{ad}^{\Psi}(O,\omega)\}.$$
\end{definition}
Clearly If $\Psi=id_M$ then $c^\Psi_{\rm HZ}(O,\omega)=c_{\rm HZ}(O,\omega)$ for any open subset $O\subset M$, where $c_{\rm HZ}(O,\omega)$ is the Hofer-Zehnder capacity defined in \cite{HoZe90}.

The following proposition lists some basic properties of the extended Hofer-Zehnder capacity. In this paper, the standard symplectic structure on $\mathbb{R}^{2n}$ is given by $\omega_0=\sum_{i=1}^{n}dq_i\wedge dp_i$ with linear coordinates $(q_1,\cdots,q_n,p_1,\cdots,p_n)$.  Let ${\rm Sp}(2n,\mathbb{R})$ denote the set of symplectic matrix of order $2n$. Each symplectic matrix $\Psi\in{\rm Sp}(2n,\mathbb{R})$
is identified with the linear symplectomorphism on $(\mathbb{R}^{2n},\omega_0)$
which has the representing matrix $\Psi$ under the standard symplectic basis
of $(\mathbb{R}^{2n},\omega_0)$, $(e_1,\cdots,e_n,f_1,\cdots,f_n)$,
where the $i$-th(resp. $i+n$-th) coordinate of $e_i$ (resp. $f_{n+i}$) is $1$ and other coordinates
are zero.
\begin{proposition}[\hbox{\cite[Proposition~1.2]{JinLu}}]\label{MonComf}
\begin{description}
  \item[(i)] {\rm (Conformality.)} $c^\Psi_{\rm HZ}(M,\alpha\omega)=\alpha c^\Psi_{\rm HZ}(M,\omega)$ for any $\alpha\in\mathbb{R}_{>0}$, and
    $c^{\Psi^{-1}}_{\rm HZ}(M,\alpha\omega)=-\alpha c^\Psi_{\rm HZ}(M,\omega)$ for any $\alpha\in\mathbb{R}_{<0}$.

\item[(ii)] {\rm (Monotonicity.)}
Suppose that $\Psi_i\in{\rm Symp}(M_i,\omega_i)$ $(i=1, 2)$.
If there exists a symplectic embedding
$\phi:(M_1,\omega_1)\to (M_2,\omega_2)$ of codimension zero such that $\phi\circ\Psi_1=\Psi_2\circ\phi$,
then for open subsets $O_i\subset M_i$ with $O_i\cap{\rm Fix}(\Psi_i)\ne\emptyset$ $(i=1, 2)$ and $\phi(O_1)\subset O_2$,
it holds that $c^{\Psi_1}_{\rm HZ}(O_1,\omega_1)\le c^{\Psi_2}_{\rm HZ}(O_2,\omega_2)$.

\item[(iii)] {\rm (Inner regularity.)}   For any precompact open subset
$O\subset M$ with $O\cap{\rm Fix}(\Psi)\ne\emptyset$, we have
$$c^\Psi_{\rm HZ}(O,\omega)=\sup\{c^\Psi_{\rm HZ}(K,\omega)\,|\, K\;\hbox{open},\;K\cap{\rm Fix}(\Psi) \ne\emptyset,\;
\overline{K}\subset O\}.$$

\item[(iv)] {\rm (Continuity.)}
 For a bounded convex domain $A\subset\mathbb{R}^{2n}$, suppose that
$\Psi\in{\rm Sp}(2n, \mathbb{R})$ satisfies $A\cap{\rm Fix}(\Psi)\ne\emptyset$. Then for every $\varepsilon>0$ there exists some $\delta>0$ such that
 for all bounded convex domain $O\subset\mathbb{R}^{2n}$ intersecting with ${\rm Fix}(\Psi)$, it holds that
 $$|c^\Psi_{\rm HZ}(O,\omega_0)-c^\Psi_{\rm HZ}(A,\omega_0)|\le
\varepsilon$$
provided that $A$ and $O$ have  the Hausdorff distance $d_{\rm H}(A,O)<\delta$.
\end{description}
 \end{proposition}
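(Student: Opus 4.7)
My plan is to verify each of the four properties (i)--(iv) separately, in the order listed, by reducing the statement to a corresponding statement about the class of admissible Hamiltonians $\mathcal{H}^\Psi_{ad}$ introduced before Definition~\ref{def: ehz}. For (i) Conformality, the key identity is that $(\alpha\omega)(X,\cdot)=-d(\alpha H)$ reduces to $\omega(X,\cdot)=-dH$, so the Hamiltonian vector field of $\alpha H$ with respect to $\alpha\omega$ coincides with the vector field of $H$ with respect to $\omega$. Hence for $\alpha>0$ the map $H\mapsto\alpha H$ is a bijection $\mathcal{H}^\Psi_{ad}(M,\omega)\to\mathcal{H}^\Psi_{ad}(M,\alpha\omega)$ scaling maxima by $\alpha$, which yields $c^\Psi_{\rm HZ}(M,\alpha\omega)=\alpha\,c^\Psi_{\rm HZ}(M,\omega)$. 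For $\alpha<0$, I first handle the case $\alpha=-1$: since the vector field of $H$ with respect to $-\omega$ is $-X_H^\omega$, the time-reversal $y(t):=x(T-t)$ converts a BVP solution for $(H,-\omega,\Psi)$ into one for $(H,\omega,\Psi^{-1})$, giving $c^\Psi_{\rm HZ}(M,-\omega)=c^{\Psi^{-1}}_{\rm HZ}(M,\omega)$. Combining with positive conformality yields the stated formula for $\alpha<0$.

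For (ii) Monotonicity, given $H\in\mathcal{H}^{\Psi_1}_{ad}(O_1,\omega_1)$ with associated compact $K\subset O_1\setminus\partial O_1$ outside which $H\equiv\max H$ and an open $U\subset O_1$ meeting $\mathrm{Fix}(\Psi_1)$ on which $H\equiv 0$, I define $\tilde H$ on $O_2$ by $\tilde H:=H\circ\phi^{-1}$ on $\phi(O_1)$ and $\tilde H\equiv\max H$ on $O_2\setminus\phi(K)$. The codimension-zero hypothesis makes $\phi(O_1)$ open in $M_2$, and $H\equiv\max H$ near $\partial O_1$ guarantees the two definitions patch smoothly, producing $\tilde H\in\mathcal{H}^{\Psi_2}(O_2,\omega_2)$ with the same maximum. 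The equivariance $\phi\circ\Psi_1=\Psi_2\circ\phi$ sends $\mathrm{Fix}(\Psi_1)$ into $\mathrm{Fix}(\Psi_2)$, so $\phi(U)\cap\mathrm{Fix}(\Psi_2)\neq\emptyset$. A BVP solution $y:[0,T]\to O_2$ for $(\tilde H,\Psi_2)$ with $0<T\le 1$ is locally constant on the open set where $y(t)\notin\phi(K)$ (since $X_{\tilde H}\equiv 0$ there), and on any component mapping into $\phi(O_1)$ the curve $\phi^{-1}\circ y$ solves the $(H,\Psi_1)$-BVP and is therefore constant. This forces $y$ to be constant everywhere, so $\tilde H\in\mathcal{H}^{\Psi_2}_{ad}(O_2,\omega_2)$. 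For (iii) Inner regularity, the $\ge$ direction is immediate from (ii) applied to inclusions $K\hookrightarrow O$. For the reverse, given $H\in\mathcal{H}^\Psi_{ad}(O,\omega)$ with associated sets $K_H,U_H\subset O$, pick an open $K$ with $K_H\cup U_H\subset K\subset\overline{K}\subset O$; then $H|_K\in\mathcal{H}^\Psi_{ad}(K,\omega)$ (admissibility is inherited since any BVP solution in $K$ is a fortiori a BVP solution in $O$), so $\max H\le c^\Psi_{\rm HZ}(K,\omega)$, and taking supremum over $H$ completes the argument.

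The continuity property (iv) is the main obstacle and requires combining (i) and (ii) with a convex-geometric sandwich. First, I apply (i) and (ii) to the linear dilation $\phi_\lambda(x)=\lambda x$, which satisfies $\phi_\lambda^*\omega_0=\lambda^2\omega_0$ and commutes with the linear map $\Psi$, to obtain the scaling formula
$$
c^\Psi_{\rm HZ}(\lambda A,\omega_0)=\lambda^2\,c^\Psi_{\rm HZ}(A,\omega_0)\qquad(\lambda>0).
$$
Next, fix $x_0\in A\cap\mathrm{Fix}(\Psi)$; since $A$ is open, there is $r>0$ with $B^{2n}(x_0,r)\subset A$. The translation $\Phi_{x_0}:y\mapsto y-x_0$ is a symplectomorphism commuting with $\Psi$ (because $\Psi x_0=x_0$), so by (ii) I may assume $x_0=0$. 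A standard convex-geometric sandwich then produces, for each $\varepsilon\in(0,1)$, a threshold $\delta=\delta(\varepsilon,r)>0$ such that $d_{\rm H}(A,O)<\delta$ forces
$$
(1-\varepsilon)A\subset O\subset(1+\varepsilon)A,
$$
and both enveloping bodies still meet $\mathrm{Fix}(\Psi)$ since they contain $0$. Applying (ii) together with the scaling formula yields
$$
(1-\varepsilon)^2 c^\Psi_{\rm HZ}(A,\omega_0)\le c^\Psi_{\rm HZ}(O,\omega_0)\le (1+\varepsilon)^2 c^\Psi_{\rm HZ}(A,\omega_0),
$$
and choosing $\varepsilon$ small delivers the desired $\varepsilon$-closeness. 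The delicate point is the convex sandwich: the inner inclusion $(1-\varepsilon)A\subset O$ requires $A$ to contain a Euclidean ball centered at the common fixed point, which is why it is essential that $x_0$ be chosen inside the open set $A$ rather than merely in $\overline{A}$, i.e., that $A$ be a convex \emph{domain}. I also have to verify that the monotonicity used here is applied to symplectic embeddings that commute with $\Psi$, which is automatic for dilations and translations by fixed points of the linear map $\Psi$.
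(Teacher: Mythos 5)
The paper does not actually prove this proposition: it is imported verbatim from \cite[Proposition~1.2]{JinLu} and used as a black box, so there is no in-paper proof to compare against. Judged on its own terms, your strategy is the standard and correct one, and it matches the toolkit the paper itself relies on downstream: (i) via the identification of Hamiltonian vector fields under rescaling of $\omega$ and time reversal for $\alpha<0$; (iii) via monotonicity in one direction and restriction of an admissible $H$ to a slightly smaller open set in the other (note that for $\max H>0$ the zero set $U_H$ is automatically contained in the compact set $K_H$, which is what makes your choice of $K$ possible); and (iv) via translation by a fixed point of $\Psi$, the quadratic scaling law $c^\Psi_{\rm HZ}(\lambda A,\omega_0)=\lambda^2c^\Psi_{\rm HZ}(A,\omega_0)$ (this is exactly (\ref{e:inv.1}) in Remark~\ref{rm:2.2}(iii), obtained from (i) and (ii) as you do), and the sandwich $(1-\varepsilon)A\subset O\subset(1+\varepsilon)A$, which does require the interior ball $B^{2n}(x_0,r)\subset A$ exactly as you flag.

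There is one genuine flaw, in your verification of admissibility of $\tilde H$ in (ii). You argue that on any component of the domain mapping into $\phi(O_1)$ the curve $\phi^{-1}\circ y$ ``solves the $(H,\Psi_1)$-BVP and is therefore constant.'' This step fails: admissibility of $H$ only forces constancy of solutions of the \emph{full} boundary value problem $x(T)=\Psi_1x(0)$ with $0<T\le 1$, and the restriction of $\phi^{-1}\circ y$ to a component of $y^{-1}(\phi(O_1))$ satisfies no such boundary condition --- an admissible $H$ may perfectly well have nonconstant orbit segments inside $K$. The missing ingredient is conservation of energy: $\tilde H\circ y$ is constant on $[0,T]$. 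If its value equals $\max\tilde H$, then every $y(t)$ is a global maximum point, so $d\tilde H(y(t))=0$ and $\dot y\equiv 0$. Otherwise $\tilde H(y(t))<\max\tilde H$ for all $t$, which forces $y([0,T])\subset\phi(K)\subset\phi(O_1)$; then $\phi^{-1}\circ y$ is defined on the whole interval, the equivariance $\phi\circ\Psi_1=\Psi_2\circ\phi$ converts $y(T)=\Psi_2y(0)$ into $\phi^{-1}(y(T))=\Psi_1\phi^{-1}(y(0))$, and admissibility of $H$ now legitimately applies to give constancy. With this repair the proof of (ii), and hence of the whole proposition, goes through.
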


\begin{remark}\label{rm:2.2}
{\rm
  \begin{description}
    \item[(i)] The two symplectomorphisms $\Psi_i\in{\rm Symp}(M_i,\omega_1)$ ($i=1, 2$) involved in the above monotonicity property are different in general.
    \item[(ii)]  By the above mononicity property, for any $\Psi, \phi\in{\rm Symp}(M,\omega)$ and
 any open subset $O\subset M$ with $O\cap{\rm Fix}(\Psi)\ne\emptyset$, there holds
\begin{equation}\label{e:inv}
c^\Psi_{\rm HZ}(O,\omega)=c_{\rm HZ}^{\phi\circ\Psi\circ\phi^{-1}}(\phi(O),\omega).
\end{equation}
In particular, denote ${\rm Symp}_{\Psi}(M,\omega):=\{\phi\in {\rm Symp}(M,\omega)\,|\,\phi\circ\Psi=\Psi\circ\phi\}$, i.e., the set of
stabilizers at $\Psi$ for the adjoint action on ${\rm Symp}(M,\omega)$. Then for any $\phi \in {\rm Symp}_{\Psi}(M,\omega)$ there holds
$$
c^\Psi_{\rm HZ}(O,\omega)=c_{\rm HZ}^{\Psi}(\phi(O),\omega).
$$
That is to say, unlike the Hofer-Zehnder capacity which is invariant under the action of ${\rm Symp}(M,\omega)$, the extended Hofer-Zehnder capacity $c^\Psi_{\rm HZ}(O,\omega)$ is only invariant under the action of a subgroup of ${\rm Symp}(M,\omega)$ related to $\Psi$.
\item[(iii)]For $\Psi\in{\rm Sp}(2n, \mathbb{R})$ and any open set
 $O\ni 0$ in $(\mathbb{R}^{2n},\omega_0)$, (i)-(ii) of Proposition~\ref{MonComf} implies
\begin{equation}\label{e:inv.1}
c^\Psi_{\rm HZ}(\alpha O,\omega_0)=\alpha^2 c_{\rm HZ}^{\Psi}(O,\omega_0),\quad\forall\alpha\ge 0.
\end{equation}
  \end{description}}
\end{remark}


In \cite{AAO08}, a key for the proof of  the inequality (\ref{e:BrunA0})
is the representation theorem for Ekeland-Hofer and Hofer-Zehnder capacity of convex bodies ({\cite{HoZe90}, \cite{EH89, EH90, Sik90}}).
To present such a representation theorem for $\displaystyle c^\Psi_{\rm EHZ}(D)$ given in \cite{JinLu},
 which is crucial for the proof of Theorem~\ref{th:Brun},
 we recall the concept of characteristic on hypersurfaces in symplectic manifolds.

\begin{definition}[\hbox{\cite[Definition~1.1]{JinLu}}]\label{def:character}
{\rm {\bf (i)} For a smooth hypersurface $\mathcal{S}$ in a symplectic manifold $(M, \omega)$
and $\Psi\in{\rm Symp}(M, \omega)$, a $C^1$ embedding $z$ from $[0,T]$ {\rm (for some $T>0$)}
 into $\mathcal{S}$ is called a $\Psi$-\textsf{characteristic} on $\mathcal{S}$
 if
 $$z(T)=\Psi z(0)\; {\rm and} \;\dot{z}(t)\in(\mathcal{L}_\mathcal{S})_{z(t)}\;\forall t\in [0,T],$$
 where $\mathcal{L}_\mathcal{S}$
 is  the characteristic line bundle  given by
 $$\mathcal{L}_\mathcal{S}={\Big\{}(x,\xi)\in T\mathcal{S}\ {\Big |}\
 {\omega}_x(\xi,\eta)=0\;\hbox{for all}\;
\eta\in T_{x}\mathcal{S}{\Big \}}.
$$
 Clearly, $z(T-\cdot)$ is a $\Psi^{-1}$-characteristic, and for any $\tau>0$ the embedding
$[0, \tau T]\to \mathcal{S},\;t\mapsto z(t/\tau)$ is also a
$\Psi$-characteristic.

\noindent{\bf (ii)} If $\mathcal{S}$ is the boundary of a convex body $D$ in $(\mathbb{R}^{2n},\omega_0)$, corresponding to the definition of closed characteristics on $\mathcal{S}$ in Definition~1 of \cite[Chap.V,\S1]{Ek90} we say
a nonconstant  absolutely continuous curve $z:[0,T]\to \mathcal{S}$ (for some $T>0$)
  to be a \textsf{generalized characteristic} on $\mathcal{S}$
  if
    $$\dot{z}(t)\in JN_\mathcal{S}(z(t))\;\hbox{a.e.},$$
     where
    $$N_\mathcal{S}(x)=\{y\in\mathbb{R}^{2n}\,|\, \langle u-x, y\rangle\le 0\;\forall u\in D\}$$
    is the normal cone to $D$ at $x\in\mathcal{S}$. If $z$ satisfies $z(T)=\Psi z(0)$  for $\Psi\in{\rm Sp}(2n,\mathbb{R})$
    in addition, then we call $z$ a \textsf{generalized $\Psi$-characteristic} on $\mathcal{S}$. For a generalized characteristic $z:[0,T]\to\mathcal{S}$,
    define its action by
    \begin{equation}\label{e:action1}
A(x)=\frac{1}{2}\int_0^T\langle -J\dot{x},x\rangle dt,
\end{equation}
where $\langle\cdot,\cdot\rangle=\omega_0(\cdot,J\cdot)$ is the standard inner product on $\mathbb{R}^{2n}$.
    }
\end{definition}
\begin{remark}
If $\mathcal{S}$ in (ii) is also  $C^{1,1}$  then
generalized $\Psi$-characteristics on $\mathcal{S}$ are $\Psi$-characteristics
up to reparameterization.
\end{remark}

As a generalization of the representation theorem for Ekeland-Hofer and Hofer-Zehnder capacity of convex bodies ({\cite{HoZe90}, \cite{EH89, EH90, Sik90}}), we have: 

\begin{thm}[\hbox{\cite[Theorem~1.8]{JinLu}}]\label{th:convex}
Let $\Psi\in{\rm Sp}(2n,\mathbb{R})$ and let $D\subset \mathbb{R}^{2n}$ be a convex bounded domain with
boundary $\mathcal{S}=\partial D$ and contain a fixed point $p$ of $\Psi$. Then
there is a generalized $\Psi$-characteristic $x^{\ast}$ on $\mathcal{S}$ such that
\begin{eqnarray}\label{e:action2}
A(x^{\ast})&=&\min\{A(x)>0\,|\,x\;\text{is a generalized}\;\Psi\hbox{-characteristic on}\;\mathcal{S}\}\\
&=&c^\Psi_{\rm EHZ}(D,\omega_0).\label{e:action-capacity1}
\end{eqnarray}
If $\mathcal{S}$ is of class $C^{1,1}$, (\ref{e:action2}) and (\ref{e:action-capacity1}) become
  $$c^\Psi_{\rm EHZ}(D,\omega_0)=A(x^{\ast})=\inf\{A(x)>0\,|\,x\;\text{is a}\;\Psi\hbox{-characteristic on}\;\mathcal{S}\}.$$
   \end{thm}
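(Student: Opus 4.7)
The plan is to adapt the Clarke dual variational approach of Ekeland--Hofer and Hofer--Zehnder to the $\Psi$-twisted boundary condition, in two stages: first a smooth case handled by minimizing the dual action, then a general convex case obtained by approximation.

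First I reduce to $p=0$: since $\Psi$ is linear and $\Psi p=p$, the translation $x\mapsto x-p$ sends $D$ to $D-p$ and leaves $\Psi$ invariant, so I may assume $0\in{\rm Fix}(\Psi)\cap{\rm int}(D)$. I then treat the case where $\partial D$ is $C^{1,1}$ and strictly convex. The Minkowski gauge $j_D$ is $C^{1,1}$ and strictly convex, so $H(x):=\tfrac12 j_D(x)^2$ is a strictly convex Hamiltonian of degree two whose Fenchel conjugate $H^*$ is also $C^{1,1}$ of degree two. On the Hilbert space
$$E_\Psi:=\{x\in H^{1/2}([0,1];\mathbb{R}^{2n})\mid x(1)=\Psi x(0)\},$$
the operator $-J(d/dt)$ has essentially the same Fredholm spectral structure as in the periodic case, except that its kernel is exactly the constant curves in ${\rm Fix}(\Psi)$; I split $E_\Psi={\rm Fix}(\Psi)\oplus E_\Psi^\sharp$ and work on $E_\Psi^\sharp$. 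On the constraint set $\mathcal{C}:=\{x\in E_\Psi^\sharp\mid A(x)=1\}$ the dual action $\phi(x):=\int_0^1 H^*(-J\dot{x})\,dt$ is convex, coercive and weakly lower semicontinuous (using compactness of $H^{1/2}\hookrightarrow L^2$), so a minimizer $\bar{x}$ exists. A Lagrange multiplier computation shows that, with $T:=\min_{\mathcal{C}}\phi$, the reparametrization $x^*(t):=\bar{x}(t/T)$ solves $\dot{x}^*=JH'(x^*)$ and $x^*(T)=\Psi x^*(0)$, lies on $\partial D$, and has action $A(x^*)=T$.

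To identify $T$ with $c^\Psi_{\rm EHZ}(D,\omega_0)$ I establish two inequalities. For $T\ge c^\Psi_{\rm EHZ}(D,\omega_0)$: given any $F\in\mathcal{H}_{ad}^{\Psi}(D,\omega_0)$, I squeeze it between $0$ and a convex rescaling $\alpha H$ for appropriate $\alpha>0$, and argue that if $m(F)>T$ then the minimization of $\phi$ produces a non-constant $\Psi$-boundary solution associated with $F$ of period $\le 1$, contradicting admissibility. For the reverse, I construct for each $\delta>0$ a $\Psi$-admissible Hamiltonian vanishing near $0$ and equal to $T-\delta$ outside a compact subset of ${\rm int}(D)$; the absence of short non-constant $\Psi$-orbits on the relevant sublevel sets follows from the minimality of $T$ in the dual problem, giving $c^\Psi_{\rm EHZ}(D,\omega_0)\ge T-\delta$.

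For a general convex body $D$, I approximate by strictly convex $C^{1,1}$ bodies $D_k^\pm$ with $D_k^-\subset D\subset D_k^+$ and $d_H(D_k^\pm,D)\to0$, each still containing $0$ in its interior. Monotonicity together with Proposition~\ref{MonComf}(iv) force $c^\Psi_{\rm EHZ}(D_k^\pm,\omega_0)\to c^\Psi_{\rm EHZ}(D,\omega_0)$. The minimizing $\Psi$-characteristics $x_k$ on $\partial D_k^-$ have uniformly bounded period and uniformly bounded $L^\infty$-norm, so an Arzela--Ascoli extraction yields a Lipschitz limit $x^*$ on $\partial D$. I expect the main obstacle to be this limiting step: one must simultaneously prevent $x_k$ from degenerating into a constant curve inside ${\rm Fix}(\Psi)\cap\partial D_k^-$, and verify that the inclusion $\dot{x}^*\in JN_\mathcal{S}(x^*)$ survives when the $C^{1,1}$ regularity of $\partial D_k^\pm$ is lost. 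Upper semicontinuity of the normal cone $N_\mathcal{S}$ together with Rockafellar's subdifferential calculus handle the second issue, while a uniform lower bound on $A(x_k)$ coming from an Ekeland--Hofer-type isoperimetric estimate on a fixed ball $B^{2n}(r)\subset D_k^-$ handles the first.
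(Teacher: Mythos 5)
First, note that the paper does not actually prove Theorem~\ref{th:convex}: it is imported verbatim from \cite[Theorem~1.8]{JinLu}, and the closest argument reproduced in the present text is the proof of Lemma~\ref{prop:Brun.1}, which runs the same Clarke-dual minimization you describe, but on the Banach space $\mathcal{F}_p\subset W^{1,p}([0,1],\mathbb{R}^{2n})$ with the normalization $x(0)\in E_1^{\bot}$ playing the role of your splitting off of ${\rm Fix}(\Psi)$. Your overall strategy (dual action principle for the twisted boundary condition, identification with the capacity via admissible Hamiltonians, then approximation of a general convex body) is therefore the intended route. Two points, however, need repair. The first is the choice of function space: elements of $H^{1/2}([0,1];\mathbb{R}^{2n})$ have no pointwise trace, so the condition $x(1)=\Psi x(0)$ is not defined on that space, and the dual functional $\int_0^1H^{\ast}(-J\dot x)\,dt$ requires $\dot x\in L^q$ in any case; the minimization must be set up in $W^{1,q}$ (equivalently the paper's $\mathcal{F}_p$), where coercivity and weak lower semicontinuity still hold thanks to the estimate $\|x\|_{L^\infty}\le \widetilde C\|\dot x\|_{L^p}$ valid on $\mathcal{F}_p$ (Step~1 of the proof of Lemma~\ref{prop:Brun.1}).

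The second and more substantive gap concerns the word ``min'' in (\ref{e:action2}). Your construction produces one generalized $\Psi$-characteristic $x^{\ast}$ with $A(x^{\ast})=c^\Psi_{\rm EHZ}(D,\omega_0)$, but the theorem also asserts that every generalized $\Psi$-characteristic on $\mathcal{S}$ with positive action has action at least this value, and this does not follow from your approximation argument: a generalized characteristic of the limit body need not arise as a limit of characteristics of the approximating bodies. One needs the separate comparison carried out in Step~4 of the proof of Lemma~\ref{prop:Brun.1}: given an arbitrary absolutely continuous $y:[0,T]\to\mathcal{S}$ with $\dot y\in JN_{\mathcal{S}}(y)$ a.e., $y(T)=\Psi y(0)$ and $A(y)>0$, one first reparametrizes it (via \cite[Lemma~4.2]{JinLu}) so that $-J\dot y\in\partial j_D^q(y)$ a.e., then rescales and translates by an element of $E_1$ to land in the constraint manifold $\mathcal{A}_p$, and finally compares its dual action with the minimum $\mu_p$ using the Legendre reciprocity formula and the Euler identity for subdifferentials of homogeneous functions. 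Without this step the identification of the capacity with the \emph{minimum} of positive actions --- which is exactly what is used later, e.g.\ in Proposition~\ref{prop:billT.6} --- is not established.
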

  \begin{definition}\label{def: carrier}
A generalized $\Psi$-characteristic $x^{\ast}$ on $\mathcal{S}$  satisfying
(\ref{e:action2})--(\ref{e:action-capacity1}) is called a \textsf{$\displaystyle c^\Psi_{\rm EHZ}$-carrier} for $D$.
\end{definition}



\section{Proofs of Theorem~\ref{th:Brun} and  Corollaries}\label{sec:Brunn}
\setcounter{equation}{0}

\subsection{Proof of Theorem~\ref{th:Brun}}\label{sec:Brunn.1}

The basic proof ideas are similar to those of \cite{AAO08}.
For $\Psi\in{\rm Sp}(2n)$, let $E_1\subset \mathbb{R}^{2n}$ be the eigenvector space which belongs to eigenvalue ~$1$ of ~$\Psi$ and $E_1^{\bot}$ be the orthogonal complement of $E_1$ with respect to the standard Euclidean inner product in $\mathbb{R}^{2n}$. For $p>1$, let
  $$
\mathcal{F}_p=\{x\in W^{1,p}([0,1],\mathbb{R}^{2n})\,|\,x(1)=\Psi x(0)\; \& \; x(0)\in E_1^{\bot}\},
$$
which is a subspace of $W^{1,p}([0,1],\mathbb{R}^{2n})$. Since the functional
$$
\mathcal{F}_p\ni x\mapsto  A(x)=\frac{1}{2}\int_0^1\langle -J\dot{x}(t),x(t)\rangle dt
$$
is $C^1$ and $dA(x)[x]=2$ for any $x\in\mathcal{F}_p$ with $A(x)=1$,  we deduce  that
$$
\mathcal{A}_p:=\{x\in \mathcal{F}_p\,|\,A(x)=1 \}
$$
is a regular $C^1$ submanifold.

Recall that for convex body $D\subset\mathbb{R}^{2n}$, $h_D$ is the support function (see the beginning in Section~\ref{sec:1.AAO}). If $D$ contains $0$
in its interior, then $j_D$ is the associated Minkowski function. $H_D^\ast$ is the Legendre transform of $H_D:=(j_D)^2$ .

 \begin{remark}\label{rm:3.1}
 {\rm
 \begin{description}
   \item[(i)] By the homogeneity of $H_D$ and $H_D^\ast$, there exist constants $R_1, R_2\geq 1$ such that
\begin{equation}\label{e:dualestimate0}
 \frac{|z|^2}{R_1}\leq H_D(z)\leq R_1|z|^2, \quad \frac{|z|^2}{R_2}\leq H^{\ast}_D(z)\leq R_2|z|^2, \quad
\forall z\in\mathbb{R}^{2n}.
\end{equation}
   \item[(ii)]For $p>1$, let $q=p/p-1$, denote by $\left(j_D^p/p\right)^{\ast}$ the Legendre transform of $j_D^p/p$. Then there holds
   \begin{equation}\label{e:Brunn.0}
   \left(\frac{1}{p}j_D^p\right)^{\ast}(w)=\frac{1}{q}(h_D(w))^q.
   \end{equation}
   In particular, we obtain that  $H_D^{\ast}$  and
the support function $h_D$  have the following relation:
\begin{eqnarray}\label{e:Brunn.1}
  H_D^{\ast}(w)=\frac{h_D(w)^2}{4}.
\end{eqnarray}
In fact, we can compute directly as follows:
\begin{eqnarray*}
  \left(\frac{1}{p}j_D^p\right)^{\ast}(w)&=&\sup_{\xi\in\mathbb{R}^{2n}}\bigl(\langle\xi,w\rangle-
  \frac{1}{p}(j_D^p(\xi))\bigr)\nonumber\\
             &=&\sup_{t\ge 0,\zeta\in \partial D}(\langle t\zeta,w\rangle- \frac{t^p}{p}(j_D^p(\zeta))\bigr)\nonumber\\
             &=&\sup_{\zeta\in \partial D,\langle\zeta,w\rangle\ge 0}\max_{t\ge 0}\bigl(\langle t\zeta,w\rangle-\frac{t^p}{p}\bigr)\nonumber\\
             &=&\sup_{\zeta\in \partial D,\langle\zeta,w\rangle\ge 0}\frac{\langle \zeta,w\rangle^q}{q}\nonumber\\
              &=&\sup_{\zeta\in D,\langle\zeta,w\rangle\ge 0}\frac{\langle \zeta,w\rangle^q}{q}\nonumber\\
             &=&\frac{1}{q}(h_D(w))^q.
\end{eqnarray*}
 \end{description}}
 \end{remark}

To prove Theorem~\ref{th:Brun}, we need the following representation for $(c^{\Psi}_{\rm EHZ}(D))^{\frac{p}{2}}$
for convex body $D\subset\mathbb{R}^{2n}$ and $p\ge 1$, which is a generalization of \cite[Proposition~2.1]{AAO08}.
\begin{prop}\label{prop:Brun.2}
  For $p_1>1$ and $p_2\ge 1$, there holds
  $$
  (c^{\Psi}_{\rm EHZ}(D))^{\frac{p_2}{2}}=\min_{x\in\mathcal{A}_{p_1}}
\int_0^1(H_D^{\ast}(-J\dot{x}(t)))^{\frac{p_2}{2}}dt=\min_{x\in\mathcal{A}_{p_1}}\frac{1}{2^{p_2}}\int_0^1
  (h_{D}(-J\dot{x}))^{p_2}dt.
  $$
\end{prop}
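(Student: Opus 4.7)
The plan is to reduce everything to the base case $p_2=2$ (the $\Psi$-version of the classical Clarke--Ekeland dual action principle) and then leverage Jensen's inequality together with the reparameterization invariance of a certain $1$-homogeneous functional. First, the equivalence of the two right-hand expressions is a pointwise identity: using (\ref{e:Brunn.1}), $(H_D^{\ast}(w))^{p_2/2} = h_D(w)^{p_2}/2^{p_2}$, so only one equality needs to be established. Throughout, I will work with $h_D$, which is positive, convex, and $1$-homogeneous because $D$ contains $0$ in its interior.

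For the base case $p_2 = 2$, the claim becomes $c^{\Psi}_{\rm EHZ}(D)=\min_{x\in\mathcal{A}_{p_1}}\int_0^1 H_D^{\ast}(-J\dot{x}(t))\,dt$. I would derive this from Theorem~\ref{th:convex} by the standard dual-action argument adapted to the $\Psi$-boundary condition. The choice $x(0)\in E_1^{\bot}$ eliminates the kernel of $\Psi - I_{2n}$ and makes the dual functional coercive on $\mathcal{A}_{p_1}$; a critical point $x$ with Lagrange multiplier $\lambda$ (coming from the constraint $A(x)=1$) satisfies $\tfrac{d}{dt}\nabla H_D^{\ast}(-J\dot{x}) = -\lambda\dot{x}$, so after rescaling $y(s) = \lambda x(s/\lambda)$ on $[0,\lambda]$ one obtains a generalized $\Psi$-characteristic on $\partial D$ with action $\lambda$; conversely any generalized $\Psi$-characteristic provided by Theorem~\ref{th:convex} gives an admissible $x$ with equal functional value. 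Combining the two directions with Theorem~\ref{th:convex} yields the base case.

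For the general $p_2 \ge 1$, note that $\int_0^1 h_D(-J\dot{x}(t))\,dt$ is invariant under orientation-preserving time reparameterization of $x$: writing $\tilde x(t)=x(\sigma(t))$ with $\sigma(0)=0$, $\sigma(1)=1$, $\sigma'>0$, the change of variables $s=\sigma(t)$ combined with the $1$-homogeneity of $h_D$ gives $\int_0^1 h_D(-J\dot{\tilde x}(t))\,dt=\int_0^1 h_D(-J\dot x(s))\,ds$. Reparameterization also preserves the action $A(x)$ and the endpoint conditions, hence preserves $\mathcal{A}_{p_1}$. By Jensen's inequality on $([0,1],dt)$ applied to the convex function $u\mapsto u^{p_2}$,
\begin{equation*}
\int_0^1 h_D(-J\dot x(t))^{p_2}\,dt \;\ge\; \left(\int_0^1 h_D(-J\dot x(t))\,dt\right)^{p_2},
\end{equation*}
with equality iff $h_D(-J\dot x(t))$ is constant in $t$; and by choosing $\sigma$ so that $h_D(-J\dot{\tilde x})$ is essentially constant, this equality is attained for every trace. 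Therefore $\min_{x\in\mathcal A_{p_1}}\int h_D^{p_2}\,dt=\bigl(\min_{x\in\mathcal A_{p_1}}\int h_D\,dt\bigr)^{p_2}$. Applying this identity to $p_2=2$ together with the base case yields $\min\int h_D\,dt=2\sqrt{c^{\Psi}_{\rm EHZ}(D)}$, and substituting back gives $\min\tfrac{1}{2^{p_2}}\int h_D^{p_2}\,dt=c^{\Psi}_{\rm EHZ}(D)^{p_2/2}$, as desired.

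The principal obstacle is the base case $p_2=2$: verifying that the dual action principle goes through verbatim in the $\Psi$-setting, including the regularity of critical points and the correspondence with the generalized $\Psi$-characteristics produced by Theorem~\ref{th:convex}. A secondary technical point is that the reparameterization making $h_D(-J\dot x)$ constant is not smooth when $\dot x$ vanishes on a set of positive measure; this is handled by approximation in $\mathcal{A}_{p_1}$ or simply by noting that the minimizing generalized $\Psi$-characteristic from Theorem~\ref{th:convex} already satisfies $H_D^{\ast}(-J\dot x)\equiv H_D(x)\equiv$ const on $\partial D$, so Jensen is tight at the extremal curve and no limiting procedure is strictly necessary.
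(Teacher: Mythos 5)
Your route is genuinely different from the paper's and, in outline, it works. The paper first proves the exponent-matched representation $(c^{\Psi}_{\rm EHZ}(D))^{p/2}=\min_{\mathcal{A}_p}I_p$ for \emph{every} $p>1$ (Lemma~\ref{prop:Brun.1}, via Clarke's Lagrange multiplier rule and the correspondence with generalized $\Psi$-characteristics), and then obtains the mixed case $(p_1,p_2)$ by comparing the nested spaces $\mathcal{A}_{p_1}\supset\mathcal{A}_{p_2}$ or $\subset$, H\"older's inequality, and a separate limiting argument $p\downarrow 1$ for $p_2=1$. You instead propose to prove only the single base case $p_2=2$ and then bootstrap all $p_2\ge 1$ at once using the reparameterization invariance of the $1$-homogeneous functional $x\mapsto\int_0^1 h_D(-J\dot x)\,dt$ together with Jensen's inequality, the key identity being $\min_{\mathcal{A}_{p_1}}\int h_D^{p_2}=\bigl(\min_{\mathcal{A}_{p_1}}\int h_D\bigr)^{p_2}$. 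This transfer is cleaner than the paper's: it explains why the answer is independent of $p_1$ (every admissible curve has a Lipschitz reparameterization with the same value of $\int h_D$, since $h_D(w)\ge c|w|$ for some $c>0$ forces the constant-speed representative into $W^{1,\infty}\subset\mathcal{A}_{p_1}$ for all $p_1$), and it handles $p_2=1$ directly, with attainment, where the paper needs a dominated-convergence limit.

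The one genuine gap is in your base case, which is where essentially all of the paper's work sits and which you largely defer to ``the standard dual-action argument.'' In particular, your claim that the condition $x(0)\in E_1^{\bot}$ makes $\int_0^1 H_D^{\ast}(-J\dot x)\,dt$ coercive on $\mathcal{A}_{p_1}$ is false for $p_1>2$: the functional only controls $\|\dot x\|_{L^2}$, so a minimizing sequence in $\mathcal{A}_{p_1}$ can be unbounded in $W^{1,p_1}$ and its weak $W^{1,2}$-limit need not lie in $\mathcal{A}_{p_1}$. The repair is available from your own toolkit: establish the dual action principle on $\mathcal{A}_2$ (this is the paper's Lemma~\ref{prop:Brun.1} with $p=2$, or \cite[Section~4.1]{JinLu}), note that for any $p_1$ every $x\in\mathcal{A}_{p_1}$ with finite value lies in $\mathcal{A}_2$ so $\inf_{\mathcal{A}_{p_1}}\ge\inf_{\mathcal{A}_2}$ fails only in the direction you need for $p_1>2$, and then use the Lipschitz reparameterization of the $\mathcal{A}_2$-minimizer (equivalently, of the $c^{\Psi}_{\rm EHZ}$-carrier supplied by Theorem~\ref{th:convex}) to exhibit a minimizer inside $\mathcal{A}_{p_1}$ for every $p_1>1$. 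With that fix, and with the observation you already make that Jensen's equality case is realized by the constant-speed representative, your argument closes.
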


Proposition~\ref{prop:Brun.2} is derived based on the following Lemma. For the case $\Psi=I_{2n}$, it is
proved in \cite[Proposition~2.2]{AAO08}.

\begin{lemma}\label{prop:Brun.1}
For $p>1$, there holds
\begin{equation}\label{eq: p-rep}
(c^{\Psi}_{\rm EHZ}(D))^{\frac{p}{2}}=\min_{x\in\mathcal{A}_p}
\int_0^1(H_D^{\ast}(-J\dot{x}(t)))^{\frac{p}{2}}dt.
\end{equation}
\end{lemma}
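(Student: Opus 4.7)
The plan is to prove the lemma by the direct method of the calculus of variations applied to the functional
$$\Phi_p(x) := \int_0^1 H_D^{\ast}(-J\dot x(t))^{p/2}\,dt$$
on the constraint manifold $\mathcal{A}_p$, adapting to the $\Psi$-boundary condition the Clarke--Ekeland dual action strategy that underlies \cite[Proposition~2.2]{AAO08}.

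First I would establish existence of a positive minimum $m_p$ of $\Phi_p$ on $\mathcal{A}_p$. Weak lower semi-continuity on $W^{1,p}$ follows from convexity of $H_D^{\ast}$. Coercivity on $\mathcal{F}_p$ uses the estimate $H_D^{\ast}(w)\ge |w|^2/R_2$ from (\ref{e:dualestimate0}), which yields $\|\dot x\|_{L^p}^p\le R_2^{p/2}\Phi_p(x)$; crucially, the boundary condition $x(0)\in E_1^{\bot}$ combined with the identity $(\Psi-I)x(0)=x(1)-x(0)=\int_0^1 \dot x\,dt$ and invertibility of $\Psi-I$ on $E_1^{\bot}$ gives $\|x(0)\|\le C\|\dot x\|_{L^1}$, so the full $W^{1,p}$-norm of $x$ is controlled by $\Phi_p(x)^{1/p}$. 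The constraint $A(x)=1$ is preserved under weak limits thanks to the compact embedding $W^{1,p}\hookrightarrow C^0$. Positivity of $m_p$ is forced because $\Phi_p(x)=0$ would imply $\dot x\equiv 0$, hence $x\equiv 0$ by the boundary conditions, contradicting $A(x)=1$.

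Next I would analyze a minimizer $x^{\ast}$ via Lagrange multipliers: $d\Phi_p(x^{\ast})=\mu\,dA(x^{\ast})$ on the tangent space to $\mathcal{F}_p$. Integrating by parts (the boundary terms cancel by the symplectic relation $\Psi^tJ\Psi=J$, which is also what makes the shift $x\mapsto x+v$ with $v\in E_1$ leave $A$ invariant) and exploiting $2$-homogeneity of $H_D$ and $H_D^{\ast}$ together with the Legendre identity $H_D^{\ast}(\nabla H_D(z))=H_D(z)$, I would introduce a suitable time-change $t\mapsto s(t)$ and scaling factor so that $z(s):=\alpha\,\nabla H_D^{\ast}(-J\dot x^{\ast}(t(s)))$ parametrizes $\partial D$ and solves the characteristic equation; the $\Psi$-boundary condition on $x^{\ast}$ transfers to the endpoint relation $z(T)=\Psi z(0)$, producing a generalized $\Psi$-characteristic on $\partial D$ whose action matches $\Phi_p(x^{\ast})^{2/p}$. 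By Theorem~\ref{th:convex} this gives $m_p^{2/p}\ge c^{\Psi}_{\rm EHZ}(D)$. Conversely, a $c^{\Psi}_{\rm EHZ}$-carrier $z:[0,T]\to\partial D$ from Theorem~\ref{th:convex} is reparametrized to $[0,1]$, rescaled to satisfy $A(x)=1$, and translated by an element of $E_1$ to arrange $x(0)\in E_1^{\bot}$; a direct scaling computation shows $\Phi_p(x)=c^{\Psi}_{\rm EHZ}(D)^{p/2}$, yielding the reverse inequality.

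The main obstacle will be the non-smooth case $\partial D\notin C^{1,1}$: the Euler--Lagrange equation produces only a generalized $\Psi$-characteristic in the sense of Definition~\ref{def:character}(ii), and the inverse construction from a carrier may not supply the regularity needed to plug back into $\mathcal{F}_p$. I would resolve this by approximating $D$ in Hausdorff distance by $C^\infty$ strictly convex bodies $D_k$, applying the smooth case to each $D_k$, and invoking the continuity property of $c^{\Psi}_{\rm EHZ}$ (Proposition~\ref{MonComf}(iv)) together with weak compactness of the associated minimizing sequences in $W^{1,p}$ to pass to the limit on both sides of the identity.
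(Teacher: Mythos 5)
Your proposal is correct in outline and, for the core argument, coincides with the paper's proof: both establish a positive minimum of the dual-type functional on $\mathcal{A}_p$ by the direct method (coercivity from $H_D^*(w)\ge |w|^2/R_2$ plus control of $x(0)$ through the boundary conditions, weak lower semicontinuity from convexity), both extract a generalized $\Psi$-characteristic from the Lagrange-multiplier equation via Legendre duality and $2$-homogeneity, and both close the loop with Theorem~\ref{th:convex} by showing every generalized $\Psi$-characteristic of positive action has $A(y)\ge \mu_p^{2/p}$ (equivalently, by evaluating the functional on a rescaled carrier). The genuine divergence is your treatment of non-smooth $D$: you propose to prove the identity only for smooth strictly convex bodies and then pass to the limit using Hausdorff approximation and the continuity property of Proposition~\ref{MonComf}(iv). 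The paper instead works with general convex bodies throughout, replacing gradients by Clarke subdifferentials, invoking the nonsmooth Lagrange multiplier rule, K\"unzle's theorem for the constancy of $j_D^q(v(t))$ along the solution, and the generalized Euler identity for subdifferentials of homogeneous functions. Your approximation route is viable --- since $h_{D_k}\to h_D$ uniformly on the sphere and near-minimizers have uniformly bounded $\|\dot x\|_{L^p}$, the minima of the functionals converge, and $c^\Psi_{\rm EHZ}(D_k)\to c^\Psi_{\rm EHZ}(D)$ by Proposition~\ref{MonComf}(iv) --- and it buys you classical $C^1$ calculus in the Euler--Lagrange step at the cost of an extra limiting argument; the paper's route is self-contained and also delivers the explicit description of minimizers (Remark~\ref{rem:Brun.1.5}) needed later for the equality case of Theorem~\ref{th:Brun}. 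Two details you should make explicit even in the smooth case: the Lagrange-multiplier constant produced by integrating by parts must be shown to lie in ${\rm Ker}(\Psi-I_{2n})$ (this is exactly what makes the reconstructed curve $z$ satisfy $z(T)=\Psi z(0)$, and it follows from testing against all $\zeta\in\mathcal{F}_p$ together with $\Psi^tJ=J\Psi^{-1}$), and the quantity $H_D^*(-J\dot x^*(t))$ must be shown constant in $t$ before your rescaled curve $z(s)=\alpha\,\nabla H_D^*(-J\dot x^*(t(s)))$ can be asserted to parametrize $\partial D$; the paper obtains the analogous constancy from \cite[Theorem~2]{Ku96}.
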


We firstly give the proof of Lemma~\ref{prop:Brun.1} and Proposition~\ref{prop:Brun.2}. The proof of  Theorem~\ref{th:Brun} is given in the final part of this section.

\begin{proof}[\it Proof of Lemma~\ref{prop:Brun.1}]

Define
$$
I_p:\mathcal{F}_p\to\mathbb{R},\;x\mapsto \int_0^1(H_D^{\ast}(-J\dot{x}(t)))^{\frac{p}{2}}dt.
$$
Then $I_p$  is convex. If $D$ is strictly convex with $C^1$-smooth boundary
then $I_p$ is a $C^{1}$ functional  with derivative given by
 $$
 dI_p(x)[y]=\int_0^1\langle \nabla (H_D^{\ast})^{\frac{p}{2}}(-J\dot{x}(t)),-J\dot{y}\rangle dt,\quad\forall x,y\in \mathcal{F}_p.
 $$

By Theorem~\ref{th:convex}, in order to prove (\ref{eq: p-rep}) we only need to show that
\begin{equation}\label{eq: p-min}
\min\{A(x)>0\,|\,x\;\text{is a generalized}\;\Psi\hbox{-characteristic on}\;\partial D\}=(\min_{x\in\mathcal{A}_p}I_p)^{\frac{2}{p}}.
\end{equation}
We will prove this in four steps.

\noindent{\bf Step 1}.\quad{\it $\mu_p:=\inf_{x\in\mathcal{A}_p}I_p(x)$ is positive. }
It is easy to prove that
\begin{equation}\label{e:Brun.2}
\|x\|_{L^\infty}\le\widetilde{C}_1\|\dot{x}\|_{L^p}\quad\forall x\in\mathcal{F}_p
\end{equation}
for some constant $\widetilde{C}_1=\widetilde{C}_1(p)>0$. So for any $x\in\mathcal{A}_p$ we have
$$
2=2A_p(x)\le \|x\|_{L^q}\|\dot{x}\|_{L^p}\le \|x\|_{L^\infty}\|\dot{x}\|_{L^p}\le  \widetilde{C}_1\|\dot{x}\|_{L^p}^2,
$$
and thus $\|\dot{x}\|_{L^p}\ge \sqrt{2/\widetilde{C}_1 }$, where $1/p+1/q=1$.
Let $R_2$ be as in (\ref{e:dualestimate0}). These lead to
$$
I_p(x)\ge\left(\frac{1}{R_2}\right)^{p/2}\|\dot{x}\|_{L^p}^p\ge \widetilde{C}_2,
\quad\hbox{where}\quad\widetilde{C}_2=
\left(\frac{2}{R_2\widetilde{C}_1}\right)^{\frac{p}{2}}>0.
$$
\noindent{\bf Step 2}.\quad{\it There exists $u\in  \mathcal{A}_p$ such  that
$I_p(u)=\mu_p$, i.e. the infimum of $I_p$ on $\mathcal{A}_p$ can be attained by some $u\in  \mathcal{A}_p$. }
Let $(x_n)\subset\mathcal{A}_p$ be a sequence satisfying
 $\lim_{n\rightarrow+\infty}I_p(x_n)=\mu_p$.
 Then there exists a constant $\widetilde{C}_3>0$ such that
$$
\left(\frac{1}{R_2}\right)^{p/2}\|\dot{x}_n\|_{L^p}^p\le I_p(x_n)\le \widetilde{C}_3,\quad\forall n\in\mathbb{N}.
$$
By (\ref{e:Brun.2}) and the fact that $\|x\|_{L^p}\le\|x\|_{L^\infty}$, we deduce that $(x_n)$ is bounded in $W^{1,p}([0,1],\mathbb{R}^{2n})$. Note that $W^{1,p}([0,1])$
is reflexive for $p>1$. $(x_n)$ has  a subsequence, also denoted by $(x_n)$, which
converges weakly to some $u\in W^{1,p}([0,1],\mathbb{R}^{2n})$.
By Arzel\'{a}-Ascoli theorem, there also exists $\hat{u}\in C^{0}([0,1],\mathbb{R}^{2n})$ such that
$$
\lim_{n\rightarrow+\infty}\sup_{t\in [0,1]}|x_n(t)-\hat{u}(t)|=0.
$$
A standard argument yields
$u(t)=\hat{u}(t)$ almost everywhere. We may consider that $x_n$ converges uniformly to $u$.
 Hence $u(1)=\Psi u(0)$ and $u(0)\in E_1^{\bot}$. As in Step 2 of \cite[Section~4.1]{JinLu}, we also have $A_p(u)=1$, and so $u\in\mathcal{A}_p$. Standard argument in convex analysis shows that there exists $\omega\in L^q([0,1],\mathbb{R}^{2n})$ such that $\omega(t)\in\partial (H_D^{\ast})^{\frac{p}{2}}(-J\dot{u}(t))$
 almost everywhere. These lead to
$$
I_p(u)-I_p(x_n)\le \int_0^1\langle \omega(t),-J(\dot{u}(t)-\dot{x}_n(t))\rangle dt\rightarrow 0 \quad\hbox{as}\quad n\rightarrow\infty ,
$$
since $x_n$ converges weakly to $u$. Hence
$\mu_p\le I_p(u)\le\lim_{n\rightarrow\infty}I_p(x_n)= \mu_p$.

\noindent{\bf Step 3}.\quad{\it There exists a generalized $\Psi$-characteristic on $\partial D$,
${x}^\ast:[0, 1]\rightarrow \partial D$,  such that $A({x}^\ast)=(\mu_p)^{\frac{2}{p}}$.} Since $u$ is the minimizer  of  $I_p|_{\mathcal{A}_p}$,
  applying Lagrangian multiplier theorem (cf. \cite[Theorem~6.1.1]{Cl83}) we get some $\lambda_p\in\mathbb{R}$ such that
$0\in\partial (I_p+\lambda_p A)(u)=\partial I_p(u)+\lambda_p A'(u)$.
This means that there exists some $\rho\in L^q([0,1],\mathbb{R}^{2n})$ satisfying
\begin{equation}\label{e:Brun.3-}
\rho(t)\in\partial (H_D^{\ast})^{\frac{p}{2}}(-J\dot{u}(t))\quad\hbox{a.e.}
\end{equation}
and
$$
\int_0^1\langle\rho(t),-J\dot{\zeta}(t)\rangle+\lambda_p\int_0^1\langle u(t),-J\dot{\zeta}(t)\rangle=0\quad\forall \zeta\in\mathcal{F}_p.
$$
From the latter we derive that for some ${\bf a}_0\in {\rm Ker}(\Psi-I)$,
\begin{equation}\label{e:Brun.3}
\rho(t)+\lambda_p u(t)={\bf a}_0,\quad\hbox{a.e..}\quad
\end{equation}
 Computing as in the case of $p=2$ (cf. Step 3 of \cite[Section~4.1]{JinLu}), we get that
$$
\lambda_p=-\frac{p}{2}\mu_p.
$$
Since $p>1$, $q=p/(p-1)>1$. From (\ref{e:Brunn.0}) we may derive that
 $(H_D^{\ast})^{\frac{p}{2}}=(\frac{h_D}{2})^p$ has the Legendre transformation
given by
\begin{eqnarray*}
\left(\frac{h_D^p}{2^p}\right)^{\ast}(x)
=\left(\frac{h_D^p}{p}\right)^{\ast}(\frac{2}{p^{\frac{1}{p}}}x)
=\frac{1}{q}j_D^q(\frac{2}{p^{\frac{1}{p}}}x)
=\frac{2^q}{qp^{\frac{q}{p}}}j_D^q(x)
=\frac{2^q}{qp^{q-1}}j_D^q(x).
\end{eqnarray*}
Using this and (\ref{e:Brun.3-})-(\ref{e:Brun.3}), we get that
$$
-J\dot{u}(t)\in\frac{2^q}{qp^{q-1}}\partial j_D^q(-\lambda_p u(t)+{\bf a}_0),\quad\hbox{a.e.}.
$$
Let $v(t):=-\lambda_p u(t)+{\bf a}_0$. Then
$$
-J\dot{v}(t)\in-\lambda_p\frac{2^q}{qp^{q-1}}\partial j_D^q(v(t))
\quad\hbox{and}\quad v(1)=\Psi v(0).
$$
This implies that $j_D^q(v(t))$ is a constant by \cite[Theorem~2]{Ku96}, and
$$
\frac{-2^{q-1}\lambda_p}{p^{q-1}}j_D^q(v(t))=\int^1_0\frac{-2^{q-1}\lambda_p}{p^{q-1}}j_D^q(v(t))dt
=\frac{1}{2}\int^1_0\langle-J\dot{v}(t),
v(t)\rangle dt=\lambda_p^2=\left(\frac{p\mu_p}{2}\right)^2
$$
by the Euler formula \cite[Theorem~3.1]{YangWei08}.
Therefore $j_D^q(v(t))=\left(\frac{p}{2}\right)^q\mu_p$ and
$$
A(v)=\frac{1}{2}\int^1_0\langle-J\dot{v}(t), v(t)\rangle dt=\lambda_p^2=\left(\frac{p\mu_p}{2}\right)^2.
$$
Let $x^{\ast}(t)=\frac{v(t)}{j_D(v(t))}$. Then $x^{\ast}$ is a generalized $\Psi$-characteristic on $\partial D$ with action
$$
A(x^{\ast})=\frac{1}{j_D^2(v(t))}A(v)=\mu_p^{\frac{2}{p}}.
$$
\noindent{\bf Step 4}. \quad{\it For any generalized $\Psi$-characteristic
on $\partial D$ with positive action, $y:[0,T]\rightarrow \partial D$, there holds $A(y)\ge \mu_p^{\frac{2}{p}}$.}  Since  \cite[Theorem~2.3.9]{Cl83} implies
$\partial j_D^q(x)=q(j_D(x))^{q-1}\partial j_D(x)$,
by \cite[Lemma~4.2]{JinLu}, after reparameterization we may assume that
$y\in W^{1,\infty}([0,T],\mathbb{R}^{2n})$ and satisfies
$$
j_D(y(t))\equiv 1\quad\hbox{and}\quad-J\dot{y}(t)\in\partial j_D^q(y(t))\quad\hbox{a.e. on}\;[0, T].
$$
 It follows that
\begin{equation}\label{e:Brun.4}
A(y)=\frac{qT}{2}.
\end{equation}
Similar to the case $p=2$, define $y^{\ast}:[0,1]\rightarrow \mathbb{R}^{2n}$,   $t\mapsto y^{\ast}(t)=a y(tT)+ {\bf b}$,
 where $a>0$ and ${\bf b}\in E_1$ are chosen so that $y^{\ast}\in\mathcal{A}_p$. Then (\ref{e:Brun.4})
 leads to
 \begin{equation}\label{e:Brun.5}
 1=A(y^{\ast})=a^2A(y)=\frac{a^2qT}{2}.
 \end{equation}
 Moreover, it is clear that
 $$
 -J\dot{y}^{\ast}(t)\in\frac{2^q}{qp^{q-1}}\partial(j_D^q)\left((aT)^{\frac{1}{q-1}}
 \frac{q^{\frac{1}{q-1}}p}{2^p}y(tT)\right).
 $$
We use this, (\ref{e:Brunn.0}) and
 the Legendre reciprocity formula (cf. \cite[Proposition~II.1.15]{Ek90})
 to derive
 \begin{eqnarray*}
 &&\frac{2^q}{qp^{q-1}}j_D^q ((aT)^{\frac{1}{q-1}}
 \frac{q^{\frac{1}{q-1}}p}{2^p}y(tT))+ \left(\frac{h_D^p}{2^p}\right)^{\ast}(-J\dot{y}^{\ast}(t))\\
 &=&\langle-J\dot{y}^{\ast}(t),(aT)^{\frac{1}{q-1}}
 \frac{q^{\frac{1}{q-1}}p}{2^p}y(tT)\rangle
 \end{eqnarray*}
 and hence
 \begin{eqnarray*}
 (H_D^{\ast}(-J\dot{y}^{\ast}(t)))^{\frac{p}{2}}
 &=&\left(\frac{h_D^p}{2^p}\right)^{\ast}(-J\dot{y}^{\ast}(t))\\
 &=&(aT)^p\frac{q^pp}{2^p}-(aT)^p\frac{q^{p-1}p}{2^p}\\
 &=&(aT)^p\frac{q^{p-1}p(q-1)}{2^p}\\
 &=&(aT)^p\frac{q^{p}}{2^p}\ge\mu_p.
 \end{eqnarray*}
 By Step 1 we get $I_p(y^\ast)\ge \mu_p$ and so $(aT)^p\frac{q^{p}}{2^p}\ge\mu_p$.
 This, (\ref{e:Brun.4}) and (\ref{e:Brun.5}) lead to
 $A(y)\ge \mu_p^{\frac{2}{p}}$.

 Summarizing the four steps we get (\ref{eq: p-min}) and hence (\ref{eq: p-rep}) is proved.
\end{proof}

\begin{remark}\label{rem:Brun.1.5}
{\rm
\begin{description}
\item[(i)] Checking Step~3, it is easily seen that
for a minimizer $u$ of  $I_p|_{\mathcal{A}_p}$ there exists ${\bf a}_0\in {\rm Ker}(\Psi-I)$
such that
$$
x^\ast(t)=\left(c^\Psi_{\rm EHZ}(D)\right)^{1/2}u(t)+ \frac{2}{p}\left(c^\Psi_{\rm EHZ}(D)\right)^{(1-p)/2}{\bf a}_0
$$
gives a generalized $\Psi$-characteristic on $\partial D$
with action $A(x^{\ast})=c^\Psi_{\rm EHZ}(D)$, namely, $x^\ast$ is a
$c^\Psi_{\rm EHZ}$-carrier for $\partial D$.
\item[(ii)] For a generalized $\Psi$-characteristic on $\partial D$ with action $A(x^{\ast})=c^\Psi_{\rm EHZ}(D)$, computation in
Step~4 implies that
$$
u(t)=\frac{x^\ast(tT)}{\sqrt{c_{\rm EHZ}^\Psi(D)}}+b=\frac{x^\ast(tT)}{\sqrt{A(x^\ast)}}+b,\quad \hbox{for some}\quad b\in E_1
$$
is a minimizer of $I_p|_{\mathcal{A}_p}$.
\end{description}}
\end{remark}

\begin{proof}[\it Proof of Proposition~\ref{prop:Brun.2}].
 Firstly, suppose $p_1\ge p_2>1$. Then $\mathcal{A}_{p_1}\subset \mathcal{A}_{p_2}$
and the first two steps in the proof of Proposition~\ref{prop:Brun.1} implies that
  $I_{p_1}|_{\mathcal{A}_{p_1}}$ has a minimizer $u\in \mathcal{A}_{p_1}$.
  It follows that
   \begin{eqnarray*}
  c^{\Psi}_{\rm EHZ}(D)&=&\left(\int_0^1(H_D^{\ast}(-J\dot{u}(t)))^{\frac{p_1}{2}}dt\right)^
  {\frac{2}{p_1}}\\
  &\ge& \left(\int_0^1(H_D^{\ast}(-J\dot{u}(t)))^{\frac{p_2}{2}}dt\right)^
  {\frac{2}{p_2}}\\
  &\ge & \inf_{x\in\mathcal{A}_{p_1}}\left(\int_0^1(H_D^{\ast}(-J\dot{x}(t)))^{\frac{p_2}{2}}dt\right)^
  {\frac{2}{p_2}}\\
  &\ge & \inf_{x\in\mathcal{A}_{p_2}}\left(\int_0^1(H_D^{\ast}(-J\dot{x}(t)))^{\frac{p_2}{2}}dt\right)^
  {\frac{2}{p_2}}\\
  &=&c^{\Psi}_{\rm EHZ}(D),
  \end{eqnarray*}
  where two equalities come from Lemma~\ref{prop:Brun.1} and the first inequality
  is because of H\"older's inequality.
  Hence the functional $\int_0^1(H_D^{\ast}(-J\dot{x}(t)))^{\frac{p_2}{2}}dt$ attains its minimum at $u$ on $\mathcal{A}_{p_1}$ and
  \begin{equation}\label{e:Brun.6}
  c^{\Psi}_{\rm EHZ}(D)=\min_{x\in\mathcal{A}_{p_1}}
  \left(\int_0^1(H_D^{\ast}(-J\dot{x}(t)))^{\frac{p_2}{2}}dt\right)^{\frac{2}{p_2}}.
  \end{equation}

Next, if  $p_2\ge p_1>1$, then $\mathcal{A}_{p_2}\subset \mathcal{A}_{p_1}$
and we have $u\in \mathcal{A}_{p_2}$ minimizing
  $I_{p_2}|_{\mathcal{A}_{p_2}}$ such  that
   \begin{eqnarray*}
  c^{\Psi}_{\rm EHZ}(D)&=&\left(\int_0^1(H_D^{\ast}(-J\dot{u}(t)))^{\frac{p_2}{2}}dt\right)^
  {\frac{2}{p_2}}\\
   &\ge & \inf_{x\in\mathcal{A}_{p_1}}\left(\int_0^1(H_D^{\ast}(-J\dot{x}(t)))^{\frac{p_2}{2}}dt\right)^
  {\frac{2}{p_2}}\\
  &\ge & \inf_{x\in\mathcal{A}_{p_1}}\left(\int_0^1(H_D^{\ast}(-J\dot{x}(t)))^{\frac{p_1}{2}}dt\right)^
  {\frac{2}{p_1}}\\
  &=&c^{\Psi}_{\rm EHZ}(D).
  \end{eqnarray*}
This yields (\ref{e:Brun.6}) again.

  Finally, for $p_2=1$ and $p_1>1$ let $u\in \mathcal{A}_{p_1}$ minimize
  $I_{p_1}|_{\mathcal{A}_{p_1}}$. It is clear that
  \begin{eqnarray}\label{e:Brun.7}
  c^{\Psi}_{\rm EHZ}(D)&=&\left(\int_0^1(H_D^{\ast}(-J\dot{u}(t)))^{\frac{p_1}{2}}dt\right)^
  {\frac{2}{p_1}}\nonumber\\
  &\ge& \left(\int_0^1(H_D^{\ast}(-J\dot{u}(t)))^{\frac{1}{2}}dt\right)^{2}\nonumber\\
  &\ge & \inf_{x\in\mathcal{A}_{p_1}}\left(\int_0^1(H_D^{\ast}(-J\dot{x}(t)))^{\frac{1}{2}}dt
  \right)^{2}
  \end{eqnarray}
  Let $R_2$ be as in (\ref{e:dualestimate0}).
 Then
 $$
 (H_D^{\ast}(-J\dot{x}(t)))^{\frac{p}{2}}\le (R_2|\dot{x}(t)|^2)^{\frac{p}{2}}\le (R_2+1)^{\frac{p_1}{2}}
 |\dot{x}(t)|^{p_1}
 $$
 for any $1\le p\le p_1$. By (\ref{e:Brun.6})
 $$
  c^{\Psi}_{\rm EHZ}(D)=\min_{x\in\mathcal{A}_{p_1}}
  \left(\int_0^1(H_D^{\ast}(-J\dot{x}(t)))^{\frac{p}{2}}dt\right)^{\frac{2}{p}},\quad 1<p\le p_1.
 $$
    Letting $p\downarrow 1$ and using Lebesgue dominated convergence theorem we get
  $$
  c^{\Psi}_{\rm EHZ}(D)\le \inf_{x\in\mathcal{A}_{p_1}}\left(\int_0^1(H_D^{\ast}(-J\dot{x}(t)))^{\frac{1}{2}}dt\right)^
  {2}.
  $$
  This and (\ref{e:Brun.7}) show  that the functional $\mathcal{A}_{p_1}\ni x\mapsto\int_0^1(H_D^{\ast}(-J\dot{x}(t)))^{\frac{1}{2}}dt$ attains its minimum at $u$ and
 $$
  c^{\Psi}_{\rm EHZ}(D)=\min_{x\in\mathcal{A}_{p_1}}
  \left(\int_0^1(H_D^{\ast}(-J\dot{x}(t)))^{\frac{1}{2}}dt\right)^{2}.
  $$
  Proposition~\ref{prop:Brun.2} is proved.
\end{proof}

\begin{proof}[\it Proof of Theorem~\ref{th:Brun}]
Choose a real $p_1>1$. Then for $p\ge 1$ Proposition~\ref{prop:Brun.2} implies
\begin{eqnarray}\label{e:Brun.8}
  c^{\Psi}_{\rm EHZ}(D+_pK)^{\frac{p}{2}}
  &=&\min_{x\in\mathcal{A}_{p_1}}\frac{1}{2^p}\int_0^1
  (h_{D+_pK}(-J\dot{x}))^{p}dt\\
  &=&\min_{x\in\mathcal{A}_{p_1}}\frac{1}{2^p}\int_0^1
  ((h_{D}(-J\dot{x}))^{p}+(h_{K}(-J\dot{x}))^{p})dt\nonumber\\
  &\ge& \min_{x\in\mathcal{A}_{p_1}}\frac{1}{2^p}\int_0^1
  (h_{D}(-J\dot{x}))^{p}+\min_{x\in\mathcal{A}_{p_1}}\frac{1}{2^p}\int_0^1
  (h_{K}(-J\dot{x}))^{p}dt\nonumber\\
  &=&c^{\Psi}_{\rm EHZ}(D)^{\frac{p}{2}}+c^{\Psi}_{\rm EHZ}(K)^{\frac{p}{2}}.
\end{eqnarray}

Now suppose that $p\ge 1$ and there exist $c^\Psi_{\rm EHZ}$ carriers
$\gamma_D:[0, T]\to\partial D$ and $\gamma_K:[0, T]\to\partial K$
satisfying $\gamma_D=\alpha\gamma_K+{\bf b}$
for some $\alpha\in\mathbb{R}\setminus\{0\}$ and
some ${\bf b}\in{\rm Ker}(\Psi-I_{2n})$. We will prove the equality in (\ref{e:BrunA}) holds.
(\ref{e:action1}) implies $A(\gamma_D)=\alpha^2 A(\gamma_K)$.
Moreover by Remark~\ref{rem:Brun.1.5}(ii)
 for suitable vectors ${\bf b}_D, {\bf b}_K\in {\rm Ker}(\Psi-I_{2n})$
\begin{eqnarray*}
z_D(t)=\frac{1}{\sqrt{A(\gamma_D)}}\gamma_D(Tt)+{\bf b}_D\quad\hbox{and}\quad
z_K(t)=\frac{1}{\sqrt{A(\gamma_K)}}\gamma_K(Tt)+{\bf b}_K
\end{eqnarray*}
in $\mathcal{A}_{p_1}$ satisfy
\begin{eqnarray}\label{e:Brun.8.1}
&&c^{\Psi}_{\rm EHZ}(D)^{\frac{p}{2}}=\min_{x\in\mathcal{A}_{p_1}}\frac{1}{2^p}\int_0^1
  (h_{D}(-J\dot{x}))^{p}dt=\frac{1}{2^p}\int_0^1
  (h_{D}(-J\dot{z}_D))^{p}dt,\\
  &&c^{\Psi}_{\rm EHZ}(K)^{\frac{p}{2}}=  \min_{x\in\mathcal{A}_{p_1}}\frac{1}{2^p}\int_0^1
  (h_{K}(-J\dot{x}))^{p}dt=\frac{1}{2^p}\int_0^1
  (h_{K}(-J\dot{z}_K))^{p}dt.\label{e:Brun.8.2}
\end{eqnarray}

It follows that $\dot{z}_D(t)=\alpha\left(\frac{A(\gamma_K)}{A(\gamma_D)}\right)^{1/2}\dot{z}_K=\dot{z}_K$
because $A(\gamma_D)=\alpha^2 A(\gamma_K)$. Then (\ref{e:Brun.8.1}) and (\ref{e:Brun.8.2}) lead to
\begin{eqnarray*}
&&c^{\Psi}_{\rm EHZ}(D)^{\frac{p}{2}}+c^{\Psi}_{\rm EHZ}(K)^{\frac{p}{2}}\\
&=&\frac{1}{2^p}\int_0^1((h_{D}(-J\dot{z}_D))^{p}+(h_{K}(-J\dot{z}_D))^{p})dt\\
&=&\frac{1}{2^p}\int_0^1h_{D+_pK}(-J\dot{z}_D)^pdt\\
&\ge & \min_{x\in\mathcal{A}_{p_1}}\frac{1}{2^p}\int_0^1
  (h_{D+_pK}(-J\dot{x}))^{p}dt\\
&=&  c^{\Psi}_{\rm EHZ}(D+_pK)^{\frac{p}{2}}.
\end{eqnarray*}
Combined with (\ref{e:Brun.8}) we get
\begin{eqnarray*}
  c^{\Psi}_{\rm EHZ}(D+_pK)^{\frac{p}{2}}
  =c^{\Psi}_{\rm EHZ}(D)^{\frac{p}{2}}+c^{\Psi}_{\rm EHZ}(K)^{\frac{p}{2}}.
\end{eqnarray*}

Now suppose that $p>1$ and the equality in (\ref{e:BrunA}) holds.
We may require that the above $p_1$ satisfies $1<p_1<p$.
By Proposition~\ref{prop:Brun.2} there exists $u\in\mathcal{A}_{p_1}$ such that
\begin{eqnarray*}
  c^{\Psi}_{\rm EHZ}(D+_pK)^{\frac{p}{2}}=\frac{1}{2^p}\int_0^1
  \left((h_{D+_pK}(-J\dot{u}))\right)^{p}dt.
  \end{eqnarray*}
The equality in (\ref{e:BrunA}) yields
 \begin{eqnarray*}
  &&\frac{1}{2^p}\int_0^1
  ((h_{D}(-J\dot{u}))^{p}+(h_{K}(-J\dot{u}))^{p})dt\\
  &=& \min_{x\in\mathcal{A}_{p_1}}\frac{1}{2^p}\int_0^1
  (h_{D}(-J\dot{x}))^{p}dt+\min_{x\in\mathcal{A}_{p_1}}\frac{1}{2^p}\int_0^1
  (h_{K}(-J\dot{x}))^{p}dt
\end{eqnarray*}
 and thus
\begin{eqnarray*}
&&c^\Psi_{\rm EHZ}(D)^{\frac{p}{2}}=\min_{x\in\mathcal{A}_{p_1}}\frac{1}{2^p}\int_0^1
  (h_{D}(-J\dot{x}))^{p}dt=\frac{1}{2^p}\int_0^1
  (h_{D}(-J\dot{u}))^{p}dt\quad\hbox{and}\\
 &&c^\Psi_{\rm EHZ}(K)^{\frac{p}{2}}=\min_{x\in\mathcal{A}_{p_1}}\frac{1}{2^p}\int_0^1
  (h_{K}(-J\dot{x}))^{p}dt=\frac{1}{2^p}\int_0^1
  (h_{K}(-J\dot{u}))^{p}dt.
\end{eqnarray*}
These and  Propositions~\ref{prop:Brun.1}, \ref{prop:Brun.2}  and H\"older's inequality lead to
\begin{eqnarray*}
\min_{x\in\mathcal{A}_{p_1}}\left(\int_0^1
  (h_{D}(-J\dot{x}))^{p_1}dt\right)^{\frac{1}{p_1}}&=& 2(c^{\Psi}_{\rm EHZ}(D))^{\frac{1}{2}} \\   &=&\min_{x\in\mathcal{A}_{p_1}}\left(\int_0^1
  (h_{D}(-J\dot{x}))^{p}dt\right)^{\frac{1}{p}}\\
  &=&\left(\int_0^1(h_{D}(-J\dot{u}))^{p}dt\right)^{\frac{1}{p}}\ge
  \left(\int_0^1
  (h_{D}(-J\dot{u}))^{p_1}dt\right)^{\frac{1}{p_1}},\\
 \min_{x\in\mathcal{A}_{p_1}}\left(\int_0^1
  (h_{K}(-J\dot{x}))^{p_1}dt\right)^{\frac{1}{p_1}}&=&2(c^{\Psi}_{\rm EHZ}(K))^{\frac{1}{2}} \\ &=&\min_{x\in\mathcal{A}_{p_1}}\left(\int_0^1
  (h_{K}(-J\dot{x}))^{p}dt\right)^{\frac{1}{p}}\\
  &=&\left(\int_0^1(h_{K}(-J\dot{u}))^{p}dt\right)^{\frac{1}{p}}\ge
  \left(\int_0^1
  (h_{K}(-J\dot{u}))^{p_1}dt\right)^{\frac{1}{p_1}}.
 \end{eqnarray*}
  It follows that
 \begin{eqnarray*}
 2(c^\Psi_{\rm EHZ}(D))^{\frac{1}{2}}&=&
  \left(\int_0^1(h_{D}(-J\dot{u}))^{p}dt\right)^{\frac{1}{p}}=
  \left(\int_0^1(h_{D}(-J\dot{u}))^{p_1}dt\right)^{\frac{1}{p_1}},\\
 2(c^\Psi_{\rm EHZ}(K))^{\frac{1}{2}}&=&
  \left(\int_0^1(h_{K}(-J\dot{u}))^{p}dt\right)^{\frac{1}{p}}
  =\left(\int_0^1(h_{K}(-J\dot{u}))^{p_1}dt\right)^{\frac{1}{p_1}}.
 \end{eqnarray*}
%
By Remark~\ref{rem:Brun.1.5}(i) there are
 ${\bf a}_D, {\bf a}_K\in {\rm Ker}(\Psi-I_{2n})$ such that
\begin{eqnarray*}
&&\gamma_D(t)=\left(c^\Psi_{\rm EHZ}(D)\right)^{1/2}u(t)+ \frac{2}{p_1}\left(c^\Psi_{\rm EHZ}(D)\right)^{(1-p_1)/2}{\bf a}_D,\\
&&\gamma_K(t)=\left(c^\Psi_{\rm EHZ}(K)\right)^{1/2}u(t)+ \frac{2}{p_1}\left(c^\Psi_{\rm EHZ}(K)\right)^{(1-p_1)/2}{\bf a}_K
\end{eqnarray*}
are  $c^\Psi_{\rm EHZ}$ carriers for $\partial D$ and $\partial K$, respectively.
Clearly, they coincide up to  dilation and translation in
${\rm Ker}(\Psi-I_{2n})$.  Theorem~\ref{th:Brun} is proved.
\end{proof}

\subsection{Some interesting consequences of Theorem~\ref{th:Brun}}\label{sec:Brunn.2}


Since $D+_1K=D+K=\{x+y\,|\, x\in D\;{\rm and}\;y\in K\}$ we have:  

\begin{corollary}\label{cor:Brun.1}
 Let $\Psi\in{\rm Sp}(2n,\mathbb{R})$, and let
 $D, K\subset \mathbb{R}^{2n}$ be two convex bodies containing fixed points of $\Psi$
  in their interiors.   Then
   \begin{description}
\item[(i)]
\begin{equation}\label{e:BrunA+}
   \left(c^{\Psi}_{\rm EHZ}(D+K)\right)^{\frac{1}{2}}\ge \left(c^{\Psi}_{\rm EHZ}(D)\right)^{\frac{1}{2}}+ \left(c^{\Psi}_{\rm EHZ}(K)\right)^{\frac{1}{2}},
   \end{equation}
   and the equality holds if  there exist
    $c^\Psi_{\rm EHZ}$-carriers for $D$ and $K$ which coincide up to  dilation and translation by elements
    in ${\rm Ker}(\Psi-I_{2n})$.
\item[(ii)] For  $x,y\in {\rm Fix}(\Psi)$, if both
${\rm Int}(D)\cap{\rm Fix}(\Psi)-x$ and ${\rm Int}(D)\cap{\rm Fix}(\Psi)-y$
are intersecting with ${\rm Int}(K)$, then
 \begin{eqnarray}\label{e:BrunB}
&&\lambda\left(c^\Psi_{\rm EHZ}(D\cap (x+K))\right)^{1/2}+
(1-\lambda)\left(c^\Psi_{\rm EHZ}(D\cap (y+K))\right)^{1/2}\nonumber\\
&\le&\left(c^\Psi_{\rm EHZ}(D\cap(\lambda x+(1-\lambda)y+K))\right)^{1/2},\quad\forall\, 0\le \lambda\le 1.
\end{eqnarray}
  In particular, if $D$ and $K$ are centrally symmetric, i.e., $-D=D$ and $-K=K$, then
  \begin{equation}\label{e:BrunB+}
  c^\Psi_{\rm EHZ}(D\cap(x+K))\le c^\Psi_{\rm EHZ}(D\cap K),\quad\forall x\in{\rm Fix}(\Psi).
  \end{equation}
\end{description}
\end{corollary}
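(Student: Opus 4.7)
Part~(i) will follow immediately from Theorem~\ref{th:Brun} specialized to $p=1$, since $D+_1K$ is by definition the convex body whose support function is $h_{D+_1K}=h_D+h_K=h_{D+K}$, hence coincides with the ordinary Minkowski sum $D+K$; the equality clause is inherited verbatim from~(\ref{e:BrunAA}).

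For~(ii), the plan is to run the familiar Brunn--Minkowski concavity scheme, adapted to the extended capacity. Set $A_1:=D\cap(x+K)$ and $A_2:=D\cap(y+K)$. The first step is to observe that the hypothesis that $({\rm Int}(D)\cap{\rm Fix}(\Psi))-x$ meets ${\rm Int}(K)$ produces a fixed point $z_1\in{\rm Fix}(\Psi)\cap{\rm Int}(A_1)$, and similarly $z_2\in{\rm Fix}(\Psi)\cap{\rm Int}(A_2)$, so that $c^\Psi_{\rm EHZ}(A_1)$ and $c^\Psi_{\rm EHZ}(A_2)$ are both well defined. The second step is the elementary convexity inclusion
$$
\lambda A_1+(1-\lambda)A_2\subseteq D\cap\bigl(\lambda x+(1-\lambda)y+K\bigr),
$$
obtained by applying convexity to $D$ and $K$ separately on the two coordinates of a generic summand, which by the monotonicity statement of Proposition~\ref{MonComf}(ii) reduces~(\ref{e:BrunB}) to a lower bound on $(c^\Psi_{\rm EHZ}(\lambda A_1+(1-\lambda)A_2))^{1/2}$.

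The third step is where part~(i) enters. I would translate $\lambda A_1$ by $-\lambda z_1$ and $(1-\lambda)A_2$ by $-(1-\lambda)z_2$; since $\Psi$ is linear both translation vectors lie in ${\rm Ker}(\Psi-I_{2n})$, so the corresponding translations commute with $\Psi$ and, by Remark~\ref{rm:2.2}(ii), leave $c^\Psi_{\rm EHZ}$ unchanged (and their sum merely translates $\lambda A_1+(1-\lambda)A_2$ by another fixed point of $\Psi$). Each translated body now contains $0$ in its interior, so part~(i) applies; combined with the homogeneity $c^\Psi_{\rm EHZ}(\alpha B)=\alpha^2 c^\Psi_{\rm EHZ}(B)$ from (\ref{e:inv.1}) this gives
$$
\bigl(c^\Psi_{\rm EHZ}(\lambda A_1+(1-\lambda)A_2)\bigr)^{1/2}\ge \lambda\bigl(c^\Psi_{\rm EHZ}(A_1)\bigr)^{1/2}+(1-\lambda)\bigl(c^\Psi_{\rm EHZ}(A_2)\bigr)^{1/2},
$$
which together with the inclusion proves~(\ref{e:BrunB}).

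For the centrally symmetric special case I would specialize to $y=-x$ and $\lambda=1/2$. The antipodal map $z\mapsto -z$ is a linear symplectomorphism of $(\mathbb{R}^{2n},\omega_0)$ commuting with every $\Psi\in{\rm Sp}(2n,\mathbb{R})$, so Remark~\ref{rm:2.2}(ii) forces $c^\Psi_{\rm EHZ}(-B)=c^\Psi_{\rm EHZ}(B)$ for every admissible $B$; combined with $-D=D$ and $-K=K$, this gives $c^\Psi_{\rm EHZ}(D\cap(-x+K))=c^\Psi_{\rm EHZ}(D\cap(x+K))$, and substituting into~(\ref{e:BrunB}) immediately yields~(\ref{e:BrunB+}). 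I expect the only genuinely delicate point in this argument to be the bookkeeping that keeps each intermediate translation vector inside ${\rm Ker}(\Psi-I_{2n})$: this is precisely the purpose of the fixed-point intersection hypothesis imposed on $x$ and $y$, and verifying that all intermediate sets remain admissible for the extended capacity is the step that requires a little care.
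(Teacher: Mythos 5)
Your proposal is correct and follows essentially the same route as the paper: reduce (i) to Theorem~\ref{th:Brun} with $p=1$, and derive (ii) from the inclusion $\lambda\bigl(D\cap(x+K)\bigr)+(1-\lambda)\bigl(D\cap(y+K)\bigr)\subseteq D\cap\bigl(\lambda x+(1-\lambda)y+K\bigr)$ together with monotonicity, part (i), homogeneity, and (for the symmetric case) invariance under $z\mapsto -z$. The one detail you elide is in (i): Theorem~\ref{th:Brun} requires $D$ and $K$ to contain the origin in their interiors, so one must first translate by points $p\in{\rm Fix}(\Psi)\cap{\rm Int}(D)$ and $q\in{\rm Fix}(\Psi)\cap{\rm Int}(K)$ and invoke the invariance of $c^{\Psi}_{\rm EHZ}$ under translations by fixed points of $\Psi$ --- exactly the bookkeeping you carry out carefully in (ii), and what the paper does for (i).
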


\begin{proof}
{\bf (i)} Indeed, let $p\in{\rm Fix}(\Psi)\cap{\rm Int}(D)$ and $q\in{\rm Fix}(\Psi)\cap{\rm Int}(K)$. Then  (\ref{e:BrunA}) implies
\begin{eqnarray*}
 \left(c^{\Psi}_{\rm EHZ}(D+K-p-q)\right)^{\frac{1}{2}}&=&  \left(c^{\Psi}_{\rm EHZ}((D-p)+(K-q))\right)^{\frac{1}{2}}\\
 &\ge& \left(c^{\Psi}_{\rm EHZ}(D-p)\right)^{\frac{1}{2}}+ \left(c^{\Psi}_{\rm EHZ}(K-q)\right)^{\frac{1}{2}}.
 \end{eqnarray*}
 For $z\in \mathbb{R}^{2n}$, consider the symplectomorphism
 $\phi_z:(\mathbb{R}^{2n},\omega_0)\to (\mathbb{R}^{2n},\omega_0),\;x\mapsto x-z$.
 Since $p$, $q$ and $p+q$ are all fixed points of $\Psi$, and
  $\phi_p$, $\phi_q$ and $\phi_{p+q}$
 commute with $\Psi$,  by Proposition~\ref{MonComf} it is clear that
\begin{eqnarray*}
&&c^{\Psi}_{\rm EHZ}(D+K-p-q)=c^{\Psi}_{\rm EHZ}(\phi_{p+q}(D+K))=c^{\Psi}_{\rm EHZ}(D+K),\\
&&c^{\Psi}_{\rm EHZ}(D-p)=c^{\Psi}_{\rm EHZ}(\phi_{p}(D))=c^{\Psi}_{\rm EHZ}(D),\\
&&c^{\Psi}_{\rm EHZ}(K-q)=c^{\Psi}_{\rm EHZ}(\phi_{q}(K))=c^{\Psi}_{\rm EHZ}(K).
\end{eqnarray*}
Other claims easily follow from the arguments therein.

\noindent{\bf (ii)} Since  $x,y\in {\rm Fix}(\Psi)$,  both
${\rm Int}(D)\cap{\rm Fix}(\Psi)-x$ and ${\rm Int}(D)\cap{\rm Fix}(\Psi)-y$
are intersecting with ${\rm Int}(K)$, we deduce that for any $0\le \lambda\le 1$
interiors of $\lambda(D\cap (x+K))$ and $(1-\lambda)(D\cap (y+K))$ contain fixed points of $\Psi$.
(\ref{e:BrunB}) follows from Proposition~\ref{MonComf} and (i) directly.

Suppose further that $D$ and $K$ are centrally symmetric, i.e., $-D=D$ and $-K=K$.
Then $D\cap(-x+K)=-(D\cap(x+K))$ and
$c^\Psi_{\rm EHZ}(-(D\cap (x+K)))=c^\Psi_{\rm EHZ}(D\cap (x+K))$
since the symplectomorphism $\mathbb{R}^{2n}\to \mathbb{R}^{2n},\,z\mapsto -z$
commutes with $\Psi$. Thus taking $y=-x$ and $\lambda=1/2$ in (\ref{e:BrunB})
leads to $c^\Psi_{\rm EHZ}(D\cap(x+K))\le c^\Psi_{\rm EHZ}(D\cap K)$.
\end{proof}

 Let $D$, $K$ and $\Psi$ be as in Corollary~\ref{cor:Brun.1}.
As in \cite{AAO08, AAO14} we may derive from Corollary~\ref{cor:Brun.1} that
the limit
\begin{equation}\label{e:BrunC}
\lim_{\varepsilon\to 0+}\frac{c^\Psi_{\rm EHZ}(D+\varepsilon K)-c^\Psi_{\rm EHZ}(D)}{\varepsilon}
\end{equation}
exists, denoted by $d^\Psi_K(D)$. In fact, by the assumptions we can choose $p\in{\rm Fix}(\Psi)\cap{\rm Int}(D)$ and $q\in{\rm Fix}(\Psi)\cap{\rm Int}(K)$. Then $(K-q)\subset R(D-p)$ for some $R>0$ (since $0\in{\rm int}(D-q)$).
 Note that $p+\varepsilon q\in {\rm Fix}(\Psi)\cap{\rm Int}(D+\varepsilon K)$.
By the proof of Corollary~\ref{cor:Brun.1}(i) and Proposition~\ref{MonComf}(ii) we get
\begin{eqnarray*}
c^\Psi_{\rm EHZ}(D+\varepsilon K)-c^\Psi_{\rm EHZ}(D)&=&c^\Psi_{\rm EHZ}((D-p)+\varepsilon (K-q))-c^\Psi_{\rm EHZ}(D-p)\\
&\le&c^\Psi_{\rm EHZ}((D-p)+\varepsilon R(D-p))-c^\Psi_{\rm EHZ}(D-p)\\
&\le&(1+\varepsilon R)c^\Psi_{\rm EHZ}(D-p)-c^\Psi_{\rm EHZ}(D-p)\\
&=&\varepsilon Rc^\Psi_{\rm EHZ}(D)
\end{eqnarray*}
and therefore  that the function of $\varepsilon>0$  in (\ref{e:BrunC})
is bounded. This function is also decreasing by Corollary~\ref{cor:Brun.1}(i)
(see reasoning \cite[pages 21-22]{AAO08}). Hence the limit in (\ref{e:BrunC}) exists.

The number $d^\Psi_K(D)$ may be viewed as the rate of change of the function $D\mapsto c^\Psi_{\rm EHZ}(D)$ in the ``direction" $K$.
From Corollary~\ref{cor:Brun.1} we can estimate it as follows.

\begin{corollary}\label{cor:Brun.2}
 Let $D$, $K$ and $\Psi$ be as in Corollary~\ref{cor:Brun.1}. Then it holds that
\begin{eqnarray}\label{e:BrunD}
2(c^\Psi_{\rm EHZ}(D))^{1/2}(c^\Psi_{\rm EHZ}(K))^{1/2}\le d^\Psi_K(D)\le
\inf_{z_D}\int_0^1h_K(-J\dot{z}_D(t))dt,
\end{eqnarray}
where $z_D:[0,1]\to\partial D$ takes over all  $c^\Psi_{\rm EHZ}$-carriers  for $D$.
\end{corollary}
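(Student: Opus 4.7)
The proof will follow the template of \cite{AAO08}, bracketing $d^\Psi_K(D)$ by differentiating at $\varepsilon=0^+$ two inequalities that come, respectively, from Corollary~\ref{cor:Brun.1}(i) and Proposition~\ref{prop:Brun.2}.

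For the lower bound I will apply Corollary~\ref{cor:Brun.1}(i) to the pair $(D,\varepsilon K)$. The hypotheses are satisfied: if $q\in{\rm Fix}(\Psi)\cap{\rm Int}(K)$ then $\varepsilon q\in{\rm Fix}(\Psi)\cap{\rm Int}(\varepsilon K)$. Since translation by any fixed point of $\Psi$ commutes with $\Psi$ and $K-q$ contains the origin, Remark~\ref{rm:2.2}(ii)--(iii) yield the scaling identity $c^\Psi_{\rm EHZ}(\varepsilon K)=\varepsilon^2 c^\Psi_{\rm EHZ}(K)$. Consequently Corollary~\ref{cor:Brun.1}(i) produces
$$
(c^\Psi_{\rm EHZ}(D+\varepsilon K))^{1/2}\ge (c^\Psi_{\rm EHZ}(D))^{1/2}+\varepsilon\,(c^\Psi_{\rm EHZ}(K))^{1/2}.
$$
Squaring, subtracting $c^\Psi_{\rm EHZ}(D)$, dividing by $\varepsilon$ and letting $\varepsilon\to 0^+$ gives $d^\Psi_K(D)\ge 2(c^\Psi_{\rm EHZ}(D)\,c^\Psi_{\rm EHZ}(K))^{1/2}$.

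For the upper bound I will fix any $p_1>1$ and invoke Proposition~\ref{prop:Brun.2} with $p_2=1$, which represents both $(c^\Psi_{\rm EHZ}(D))^{1/2}$ and $(c^\Psi_{\rm EHZ}(D+\varepsilon K))^{1/2}$ as minima over $\mathcal{A}_{p_1}$ of $\tfrac{1}{2}\int_0^1 h_{\bullet}(-J\dot x)\,dt$. Given an arbitrary $c^\Psi_{\rm EHZ}$-carrier $z_D:[0,1]\to\partial D$ (any carrier can be linearly reparametrized onto $[0,1]$ without changing its action or the integrand of interest), Remark~\ref{rem:Brun.1.5}(ii) tells me that $u(t):=z_D(t)/\sqrt{c^\Psi_{\rm EHZ}(D)}+b$ lies in $\mathcal{A}_{p_1}$ and realizes the minimum for $D$. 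Plugging $u$ as a trial element in the representation for $D+\varepsilon K$ and using $h_{D+\varepsilon K}=h_D+\varepsilon h_K$,
$$
(c^\Psi_{\rm EHZ}(D+\varepsilon K))^{1/2}\le (c^\Psi_{\rm EHZ}(D))^{1/2}+\frac{\varepsilon}{2}\int_0^1 h_K(-J\dot u(t))\,dt.
$$
Squaring and passing to the limit $\varepsilon\to 0^+$ yields $d^\Psi_K(D)\le (c^\Psi_{\rm EHZ}(D))^{1/2}\int_0^1 h_K(-J\dot u)\,dt$. By the $1$-homogeneity of $h_K$ together with the relation $\dot u=\dot z_D/\sqrt{c^\Psi_{\rm EHZ}(D)}$, the right-hand side simplifies to $\int_0^1 h_K(-J\dot z_D)\,dt$, and taking the infimum over carriers $z_D$ finishes the proof.

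The only delicate point is the bookkeeping that identifies the minimizer of $I_{p_1}|_{\mathcal{A}_{p_1}}$ with a rescaling of a $c^\Psi_{\rm EHZ}$-carrier on $[0,1]$; this is exactly what Remark~\ref{rem:Brun.1.5} provides, after which both bounds reduce to routine differentiation of the Brunn--Minkowski-type inequalities already established.
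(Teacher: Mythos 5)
Your argument is correct and follows essentially the same route as the paper's: the lower bound comes from differentiating Corollary~\ref{cor:Brun.1}(i) applied to $(D,\varepsilon K)$ together with the scaling $c^{\Psi}_{\rm EHZ}(\varepsilon K)=\varepsilon^2c^{\Psi}_{\rm EHZ}(K)$, and the upper bound from inserting the rescaled carrier (via Remark~\ref{rem:Brun.1.5}) as a trial element in the representation of Proposition~\ref{prop:Brun.2} for $D+\varepsilon K$. The one detail you gloss over is that Proposition~\ref{prop:Brun.2} is set up for bodies containing the origin in their interiors, so the paper first translates by fixed points $p\in{\rm Fix}(\Psi)\cap{\rm Int}(D)$ and $q\in{\rm Fix}(\Psi)\cap{\rm Int}(K)$ and then uses $\int_0^1\langle q,J\dot z_D\rangle\,dt=0$ (a consequence of $z_D(1)=\Psi z_D(0)$, $\Psi q=q$ and $\Psi^tJ=J\Psi^{-1}$) to convert $h_{K-q}$ back into $h_K$ in the final bound.
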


In \cite{AAO08, AAO14} ${\rm length}_{JK^\circ}(z_D)=\int_0^1j_{JK^\circ}(\dot{z}_D(t))dt$
is called the \textsf{length of $z_D$ with respect to the convex body $JK^\circ$.}
In the case $0\in{\rm int}(K)$, since $h_K(-Jv)=j_{JK^\circ}(v)$,  (\ref{e:BrunD}) implies
$$
d^\Psi_K(D)\le
\inf_{z_D}\int_0^1j_{JK^\circ}(\dot{z}_D(t))dt\quad
\hbox{and hence}\quad
c^\Psi_{\rm EHZ}(D)c^\Psi_{\rm EHZ}(K)\le\frac{1}{4}\inf_{z_D}({\rm length}_{JK^\circ}(z_D))^2.
$$
It is not hard to see that (\ref{e:BrunB+}) may not hold if one of $D$ and $K$ is not convex.
Therefore the symplectic capacities only show good behavior in the convex category.

\begin{proof}[\it Proof of Corollary~\ref{cor:Brun.2}]
The first inequality in (\ref{e:BrunD}) easily follows from Corollary~\ref{cor:Brun.1}(i).
In order to prove the second one let us
fix a real $p_1>1$. By Proposition~\ref{prop:Brun.2} we have $u\in \mathcal{A}_{p_1}$ such
that
  \begin{eqnarray}\label{e:Brun.9}
  (c^{\Psi}_{\rm EHZ}(D))^{\frac{1}{2}}=(c^{\Psi}_{\rm EHZ}(D-p))^{\frac{1}{2}}
 &=&\min_{x\in\mathcal{A}_{p_1}}\frac{1}{2}\int_0^1
  h_{D-p}(-J\dot{x}))\nonumber\\
  &=&\frac{1}{2}\int_0^1h_{D-p}(-J\dot{u}))
  \end{eqnarray}
and that for some ${\bf a}_0\in {\rm Ker}(\Psi-I_{2n})$
\begin{eqnarray}\label{e:Brun.10}
x^\ast(t)=\left(c^\Psi_{\rm EHZ}(D)\right)^{1/2}u(t)+ \frac{2}{p_1}\left(c^\Psi_{\rm EHZ}(D)\right)^{(1-p_1)/2}{\bf a}_0
 \end{eqnarray}
is  a $c^\Psi_{\rm EHZ}$ carrier for $\partial (D-p)$ by Remark~\ref{rem:Brun.1.5}.
Proposition~\ref{prop:Brun.2} also leads to
\begin{eqnarray}\label{e:Brun.11}
  (c^{\Psi}_{\rm EHZ}(D+ \varepsilon K))^{\frac{1}{2}}&=&(c^{\Psi}_{\rm EHZ}((D-p)+ \varepsilon (K-q)))^{\frac{1}{2}}\\
  &=&\min_{x\in\mathcal{A}_{p_1}}\frac{1}{2}\int_0^1
  (h_{D-p}(-J\dot{x})+ \varepsilon h_{K-q}(-J\dot{x}))\nonumber\\
  &\le& \frac{1}{2}\int_0^1
  h_{D-p}(-J\dot{u})+\frac{\varepsilon}{2}\int_0^1
  h_{K-q}(-J\dot{u})\nonumber\\
  &=&(c^{\Psi}_{\rm EHZ}(D,\omega_0))^{\frac{1}{2}}+\frac{\varepsilon}{2}\int_0^1
  h_{K-q}(-J\dot{u})
\end{eqnarray}
because of (\ref{e:Brun.9}). Let $z_D(t)=x^\ast(t)+p$ for $0\le t\le 1$. Since $q$ and ${\bf a}_0$
are fixed points of $\Psi$ it is easily checked that $z_D$ is
a $c^\Psi_{\rm EHZ}$ carrier for $\partial D$. From (\ref{e:Brun.11}) it follows that
\begin{equation}\label{e:Brun.12}
\frac{(c^\Psi_{\rm EHZ}(D+\varepsilon K))^{\frac{1}{2}}-(c^\Psi_{\rm EHZ}(D))^{\frac{1}{2}}}{\varepsilon}\le
\frac{1}{2}\left(c^\Psi_{\rm EHZ}(D)\right)^{-\frac{1}{2}}
\int_0^1h_{K-q}(-J\dot{z}_D).
\end{equation}
Since $h_{K-q}(-J\dot{z}_D)=h_{K}(-J\dot{z}_D)+\langle q, J\dot{z}_D\rangle$ (see page 37 and Theorem~1.7.5 in \cite{Sch93}) and
$$
\int_0^1\langle q, J\dot{z}_D\rangle=\langle q, J(z_D(1)-z_D(0))\rangle=-\langle Jq, \Psi z_D(0)\rangle+\langle Jq, z_D(0)\rangle=0
$$
(by the fact $\Psi^tJ=J\Psi^{-1}$), letting $\varepsilon\to 0+$ in (\ref{e:Brun.12})
we arrive at the second inequality in (\ref{e:BrunD}).
\end{proof}


\section{Classification of $(A, \Delta, \Lambda)$-billiard trajectories  and related properties of proper trajectories}\label{sec:BillT-1}
\setcounter{equation}{0}


In this section, we give the classification of $(A, \Delta, \Lambda)$-billiard trajectories, related properties of proper trajectories,
 the relation between $A$-billiard trajectories in $\Delta$ and $(A,\Delta,B^n)$-billiard trajectories.
Moreover, on the base of the latter we prove that $\xi^A(\Delta)$ provides a lower bound of lengths of $A$-billiard trajectory in $\Delta$.

\begin{proposition}\label{prop:billT.0}
Let $A$, $\Delta$ and $\Lambda$ be as in (\ref{e:linesymp2}).
  \begin{description}
    \item[(i)]If both $\Delta$ and $\Lambda$ are also strictly convex (i.e.,
they have strictly positive Gauss curvatures at every point of their boundaries),
then every $(A, \Delta, \Lambda)$-billiard trajectory is either proper or  gliding.
\item[(ii)] Every proper $(A, \Delta, \Lambda)$-billiard trajectory
$\gamma:[0, T]\to \partial(\Delta\times\Lambda)$
cannot be contained in $\Delta\times\partial \Lambda$ or $\partial\Delta\times\Lambda$.
Consequently, $\gamma^{-1}(\partial\Delta\times\partial\Lambda)$ contains at least
a point in $(0, T)$.
  \end{description}
\end{proposition}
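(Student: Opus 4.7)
My plan is to analyze $\gamma$ stratum-by-stratum on $\partial(\Delta\times\Lambda)$, which decomposes as the disjoint union of the two open faces $F_1:={\rm Int}(\Delta)\times\partial\Lambda$ and $F_2:=\partial\Delta\times{\rm Int}(\Lambda)$ together with the corner $C:=\partial\Delta\times\partial\Lambda$. Set $E:=\gamma^{-1}(C)$, which is closed in $[0,T]$. On each connected component of $[0,T]\setminus E$, continuity places $\gamma$ into a single open face, and by (BT1) $\gamma$ is a straight-line chord there: on $F_1$ the coordinate $\gamma_p$ is frozen while $\gamma_q$ moves with constant velocity $-\kappa\nabla j_\Lambda(\gamma_p)$; on $F_2$ the roles of $q,p$ are reversed. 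This chord/corner picture drives both proofs.

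For part (i), I would use piecewise smoothness to rule out infinitely many isolated corner hits, which forces $E$ (if infinite) to contain a non-degenerate closed interval $[a,b]$; then I would show that strict convexity forces $[a,b]=[0,T]$. On a smooth subpiece of $\gamma|_{[a,b]}$, I will differentiate $j_\Delta(\gamma_q)\equiv 1$ and $j_\Lambda(\gamma_p)\equiv 1$ and insert the cone representation of $\dot\gamma$ from (BT2) to obtain the tangency identity $\langle\nabla j_\Delta(\gamma_q(t)),\nabla j_\Lambda(\gamma_p(t))\rangle=0$; strict convexity of both bodies is needed here, because otherwise a cone multiplier could vanish on a subinterval and force the remaining coordinate to traverse a line segment on $\partial\Delta$ or $\partial\Lambda$. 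If $a>0$, the terminal chord in $[0,T]\setminus E$ arriving at $a$ lies, say, on $F_1$, so $\gamma_q$ approaches $\gamma_q(a)\in\partial\Delta$ from ${\rm Int}(\Delta)$ with constant direction $v=-\kappa\nabla j_\Lambda(\gamma_p(a))$; strict convexity of $\Delta$ then forces this arrival to be transversal, yielding $\langle\nabla j_\Delta(\gamma_q(a)),\nabla j_\Lambda(\gamma_p(a))\rangle<0$, which contradicts the tangency identity at $\gamma(a)$ inherited from the corner side. The analogous argument gives $b=T$, whence $E=[0,T]$. The main technical difficulty will be handling corner subpieces where one of the cone multipliers could conceivably vanish on a set of positive measure; this is where strict convexity of both $\Delta$ and $\Lambda$ will enter essentially.

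For part (ii), I propose an algebraic contradiction via Euler's identity. Assuming $\gamma$ is proper with $\gamma([0,T])\subset\Delta\times\partial\Lambda$, the trajectory avoids $F_2$, so on the dense open set $[0,T]\setminus E$ we have $\gamma\in F_1$ and $\dot\gamma_p=0$; continuity upgrades this to $\gamma_p\equiv p_0$ for some $p_0\in\partial\Lambda$, and $\gamma_q$ becomes a single straight-line segment with nonzero velocity $v:=-\kappa\nabla j_\Lambda(p_0)$. The closing relation $\gamma(T)=\Psi_A\gamma(0)$ then forces $p_0\in{\rm Fix}(A^t)$ and $(A-I_n)\gamma_q(0)=Tv$, so $v\in{\rm Image}(A-I_n)$. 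Using the standard identity ${\rm Image}(A-I_n)=({\rm Ker}(A^t-I_n))^{\bot}={\rm Fix}(A^t)^{\bot}$, I obtain $\nabla j_\Lambda(p_0)\in{\rm Fix}(A^t)^{\bot}$, and pairing with $p_0\in{\rm Fix}(A^t)$ yields $\langle p_0,\nabla j_\Lambda(p_0)\rangle=0$; this contradicts Euler's identity $\langle p_0,\nabla j_\Lambda(p_0)\rangle=j_\Lambda(p_0)=1$ for the degree-one positively homogeneous function $j_\Lambda$. The case $\gamma([0,T])\subset\partial\Delta\times\Lambda$ will be handled symmetrically, exchanging the roles of $\Delta,\Lambda$ and replacing $A$ by $(A^t)^{-1}$, and using ${\rm Image}((A^t)^{-1}-I_n)={\rm Fix}(A)^{\bot}$ together with Euler's identity for $j_\Delta$. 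Finally, the ``consequently'' clause will follow at once: were $E\cap(0,T)$ empty, then $\gamma|_{(0,T)}$ would lie on a single open face and continuity would place $\gamma([0,T])$ inside $\Delta\times\partial\Lambda$ or $\partial\Delta\times\Lambda$, which has just been excluded.
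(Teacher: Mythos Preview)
Your argument for part (ii) and the ``consequently'' clause is correct and coincides with the paper's proof: both freeze $\gamma_p\equiv p_0\in\partial\Lambda$, read off $A^tp_0=p_0$ and $(A-I_n)\gamma_q(0)\parallel\nabla j_\Lambda(p_0)$ from the closing condition, and then pair these to obtain $\langle p_0,\nabla j_\Lambda(p_0)\rangle=0$, contradicting Euler's identity $j_\Lambda(p_0)=1$. Your phrasing via ${\rm Image}(A-I_n)={\rm Fix}(A^t)^{\perp}$ is slightly more conceptual, but the content is identical.

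For part (i) the paper gives no argument at all, simply citing Proposition~2.12 of Artstein-Avidan--Ostrover (2014), so there is nothing to compare against. Your sketch is a reasonable plan and correctly identifies where strict convexity enters (ruling out tangential arrival at the corner from an open face, and ruling out degenerate cone multipliers on corner arcs). One step you should tighten when writing it up is the claim that piecewise smoothness plus infinite $E$ forces $E$ to contain a nondegenerate interval: a closed infinite subset of $[0,T]$ need not contain an interval, so you will need to use more than closedness---for instance, argue that near an accumulation point of $E$ the alternating chord structure on $F_1$/$F_2$ together with strict convexity is incompatible with shrinking chord lengths unless the limit point already satisfies the gliding tangency condition, and then propagate.
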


\begin{remark}
  {\rm If the condition "proper" in (ii) in the above claim is dropped, then "$\Delta\times\partial \Lambda$ or $\partial\Delta\times\Lambda$" should
  changed into "${\rm Int}(\Delta)\times\partial \Lambda$ or $\partial\Delta\times{\rm Int}(\Lambda)$".}
\end{remark}

\begin{proof}[{\it Proof of Proposition~\ref{prop:billT.0}}]
(i) can be obtained form Proposition~2.12 in \cite{AAO14}. Let us prove (ii).
By the definition we may assume that $\Delta\subset\mathbb{R}^n_q$ and $\Lambda\subset\mathbb{R}^n_p$
  contain the origin in their interiors.
We only need to prove that every proper $(A, \Delta, B^n)$-billiard trajectory
cannot be contained in $\Delta\times\partial \Lambda$. (Another case may be proved
with the same arguments.)
Otherwise, let $\gamma=(\gamma_q,\gamma_p):[0, T]\to\partial(\Delta\times \Lambda)$
be such a trajectory, that is, $\gamma([0,T])\subset\Delta\times\partial \Lambda$.
Then $\gamma^{-1}(\partial\Delta\times\partial \Lambda)$ is finite (including empty)
and there holds
$$
\dot{\gamma}(t)=(\dot{\gamma}_q(t), \dot{\gamma}_p(t))=(\kappa\nabla j_{\Lambda}({\gamma}_p(t)),0)          \quad\forall t\in [0, T]\setminus\gamma^{-1}(\partial\Delta\times\partial \Lambda)
$$
for some positive constant $\kappa$.
It follows that $\gamma_p$ is constant on each component of
$[0, T]\setminus\gamma^{-1}(\partial\Delta\times\partial \Lambda)$, and so constant
on $[0, T]\setminus\gamma^{-1}(\partial\Delta\times\partial \Lambda)$ by continuity of $\gamma$.
 Hence $\gamma_p\equiv p_0\in\partial \Lambda$,
 and so $\gamma_q(t)=q_0+\kappa t\nabla j_{\Lambda}(p_0)$ on $[0, T]$, where $q_0=\gamma_q(0)$. Now
 $$
 (q_0+\kappa T\nabla j_{\Lambda}(p_0), p_0)=\gamma(T)=\Psi_A\gamma(0)=(A\gamma_q(0), (A^t)^{-1}\gamma_p(0))=
 (Aq_0, (A^t)^{-1}p_0).
 $$
This implies that $A^tp_0=p_0$ and $q_0-Aq_0=-\kappa T\nabla j_{\Lambda}(p_0)$.
The former equality leads to $\langle p_0, v-Av\rangle=0\;\forall v\in\mathbb{R}^n$.
Combing this with the latter equality we obtain
 $\langle p_0, \nabla j_{\Lambda}(p_0)\rangle=0$.
This implies $j_{\Lambda}(p_0)=0$ and so $p_0=0$,
which contradicts $p_0\in\partial \Lambda$ since $0\in{\rm int}(\Lambda)$.
\end{proof}


Recall that the action of an $(A, \Delta,\Lambda)$-billiard trajectory $\gamma$ is given by (\ref{e:action1}).
The length of an $A$-billiard trajectory $\sigma:[0,T]\to\Delta$ is given by
$$
L(\sigma):=\sum_{i=0}^n\|q_{j+1}-q_j\|,
$$
with
$$q_0=\sigma(0),\; q_1=\sigma(t_i),\; \cdots,\; q_{m-1}=\sigma(t_{m-1}), \;q_m=\sigma(T),$$
where
$$
\{t_1,\cdots,t_{m-1}\}:=\mathcal{B}_{\sigma}
$$
is the finite set in Definition~\ref{def:billT.1}. Here $\|\cdot\|$  is the Euclid norm in $\mathbb{R}^n$.

The following proposition gives the relation between $A$-billiard trajectories in $\Delta$ and $(A,\Delta,B^n)$-billiard trajectories.

\begin{proposition}\label{prop:billT.5}
For a smooth convex body in $\Delta\subset\mathbb{R}^n$ and $A\in{\rm O}(n)$
satisfying ${\rm Fix}(A)\cap{\rm Int}(\Delta)\ne\emptyset$,
every $A$-billiard trajectory in  $\Delta$,  $\sigma:[0,T]\to \Delta$,
is the projection to  $\Delta$ of a proper $(A, \Delta, B^n)$-billiard trajectory
whose action is equal to the length of $\sigma$.
\end{proposition}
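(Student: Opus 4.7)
The plan is to build, from a given $A$-billiard trajectory $\sigma:[0,T]\to\Delta$ with bounce set $\mathscr{B}_\sigma=\{t_1<\cdots<t_{m-1}\}$ and constant speed $c:=|\dot\sigma|$, a piecewise-smooth curve $\gamma=(\gamma_q,\gamma_p)$ on an enlarged interval $[0,\tilde T]$ that alternates between \emph{free-flight phases} in ${\rm Int}(\Delta)\times\partial B^n$, given by $\gamma(s)=(\sigma(s),-\dot\sigma(s)/c)$ (so $|\gamma_p|\equiv 1$ and $\dot\gamma_q=-c\gamma_p$), and \emph{bounce phases} in $\partial\Delta\times{\rm Int}(B^n)$ inserted at each $t_i$, during which $\gamma_q\equiv q_i:=\sigma(t_i)$ is frozen while $\gamma_p$ traverses, with velocity $c\nabla j_\Delta(q_i)$, the straight line inside $B^n$ from $-\dot\sigma^-(t_i)/c$ to $-\dot\sigma^+(t_i)/c$. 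After translating by a point $\bar q\in{\rm Fix}(A)\cap{\rm Int}(\Delta)$---a move that commutes with $\Psi_A$ and is permitted by Remark~\ref{rem:ADL-BT}(ii)---I may assume $0\in{\rm Int}(\Delta)$, so the Minkowski functional $j_\Delta$ is defined and Euler's identity $\langle\nabla j_\Delta(q),q\rangle=1$ holds on $\partial\Delta$. The reflection identity (ABii) forces $\dot\sigma^-(t_i)-\dot\sigma^+(t_i)$ to be a positive multiple of $\nabla j_\Delta(q_i)$ (the sign pinned down by $\langle\dot\sigma^-(t_i),\nabla j_\Delta(q_i)\rangle>0$), so each bounce duration $\Delta t_i>0$ is unambiguous and the chord lies in $\overline{B^n}$.

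Checking that $\gamma$ is a proper $(A,\Delta,B^n)$-billiard trajectory is then a piecewise verification. Continuity at each junction is built in. On a free-flight piece $\dot\gamma=(-c\gamma_p,0)=c\,\mathfrak{X}(\gamma)$ (using $\nabla j_{B^n}(\gamma_p)=\gamma_p$ on $\partial B^n$); on a bounce piece $\dot\gamma=(0,c\nabla j_\Delta(q_i))=c\,\mathfrak{X}(\gamma)$. Thus (BT1) holds with $\kappa=c$, and at each of the (isolated, hence finitely many) corner times $s\in\gamma^{-1}(\partial\Delta\times\partial B^n)$ the one-sided derivative from the free-flight side equals $-c(\nabla j_{B^n}(\gamma_p(s)),0)$ and from the bounce side equals $c(0,\nabla j_\Delta(\gamma_q(s)))$, so both lie in the cone~(\ref{e:linesymp3}); properness and (BT2) follow. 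The closing identity $\gamma(\tilde T)=\Psi_A\gamma(0)$ reduces to $\sigma(T)=A\sigma(0)$ together with whichever of (ABiii)--(ABvi) is in force, after attaching (when $\sigma(0)$ or $\sigma(T)$ lies on $\partial\Delta$) a suitable partial bounce segment at $s=0$ and/or $s=\tilde T$ so that the initial and terminal unit vectors $\mp\dot\sigma^\pm/c$ are genuinely related by $A$ in the sense of the chosen matching identity (\ref{e:bill5})--(\ref{e:bill8}).

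The action computation uses integration by parts: $A(\gamma)=\tfrac12\int_0^{\tilde T}(\gamma_q\cdot\dot\gamma_p-\gamma_p\cdot\dot\gamma_q)\,ds=\tfrac12[\gamma_q\cdot\gamma_p]_0^{\tilde T}-\int_0^{\tilde T}\gamma_p\cdot\dot\gamma_q\,ds$. Since $A\in {\rm O}(n)$ gives $\Psi_A={\rm diag}(A,A)$, one has $A\gamma_q(0)\cdot A\gamma_p(0)=\gamma_q(0)\cdot\gamma_p(0)$, so the boundary term vanishes and $A(\gamma)=-\int_0^{\tilde T}\gamma_p\cdot\dot\gamma_q\,ds$. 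The bounce phases contribute zero (since $\dot\gamma_q\equiv 0$ there), and each free-flight piece contributes $\int|\dot\sigma|^2/c\,ds=c\cdot({\rm duration})$, summing to $cT=L(\sigma)$. The projection $\gamma_q:[0,\tilde T]\to\Delta$ collapses each bounce phase to the single point $q_i$ and recovers $\sigma$ after discarding these constant sub-intervals, which is the intended sense of the word ``projection''.

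The main subtlety will be the endpoint bookkeeping in cases (ABiv)--(ABvi), where one must choose partial bounce segments at $s=0$ and $s=\tilde T$ compatible with exactly one of the four matching identities (\ref{e:bill5})--(\ref{e:bill8}); once the right variant is selected, the remaining checks are routine adaptations of the closed-trajectory construction underlying the classical case treated in \cite{AAO14}.
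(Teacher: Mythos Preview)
Your proposal is correct and follows essentially the same construction as the paper: both lift $\sigma$ by concatenating free-flight pieces $(\sigma(s),-\dot\sigma(s)/c)$ in $\Delta\times\partial B^n$ with straight ``bounce'' segments $(\sigma(t_i),\;\cdot\;)$ in $\partial\Delta\times B^n$ joining $-\dot\sigma^-(t_i)/c$ to $-\dot\sigma^+(t_i)/c$, then attach optional initial/terminal bounce segments to realize whichever of (\ref{e:bill5})--(\ref{e:bill8}) holds, and compute the action by observing that bounce pieces contribute zero. Your treatment is in fact slightly more careful than the paper's in two respects: you parametrize the bounce segments so that the \emph{same} constant $\kappa=c$ works in (BT1) on both types of pieces, and you explicitly verify the one-sided derivative condition (BT2) at the corner times, whereas the paper simply asserts the concatenation is a generalized $\Psi_A$-characteristic.
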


\begin{proof}
By the definitions we only need to consider the case that $0\in{\rm Int}(\Delta)$.
Let $\sigma:[0,T]\to \Delta$ be a $A$-billiard trajectory in  $\Delta$ with
$\mathscr{B}_\sigma=\{t_1<\cdots<t_k\}\subset (0, T)$ as in Definition~\ref{def:billT.2}.
Then $|\dot\sigma(t)|$ is equal to a positive constant $\kappa$ in $(0,T)\setminus\mathscr{B}_\sigma$.

\underline{Suppose that (ABiii) occurs}.  Define
\begin{eqnarray*}
&&\alpha_1(t)=(\sigma(t), -\frac{1}{\kappa}\dot\sigma^+(0)),\quad 0\le t\le t_1,\\
&&\beta_1(t)=(\sigma(t_1), -\frac{1}{\kappa}\dot\sigma^+(0)+ \frac{t}{\kappa}(\dot\sigma^-(t_1)-\dot\sigma^+(t_1)),\quad 0\le t\le 1.
\end{eqnarray*}
Since the second equality in (\ref{e:bill1}) implies that $\dot\sigma^-(t_i)-\dot\sigma^+(t_i)$
is an outer normal vector to $\partial\Delta$ at $\sigma(t_i)$ for each $t_i\in\mathscr{B}_\sigma$,
it is easily checked that both are generalized characteristics on $\partial(\Delta\times\Lambda)$ and
$\alpha_1(t_1)=\beta_1(0)$.
Similarly, define
\begin{eqnarray*}
&&\alpha_2(t)=(\sigma(t), -\frac{1}{\kappa}\dot\sigma^+(t_1)),\quad t_1\le t\le t_2,\\
&&\beta_2(t)=(\sigma(t_1), -\frac{1}{\kappa}\dot\sigma^+(t_1)+ \frac{t}{\kappa}(\dot\sigma^-(t_2)-\dot\sigma^+(t_2)),\quad 0\le t\le 1,\\
&&\vdots\\
&&\alpha_k(t)=(\sigma(t), -\frac{1}{\kappa}\dot\sigma^+(t_{k-1})),\quad t_{k-1}\le t\le t_k,\\
&&\beta_k(t)=(\sigma(t_{k-1}), -\frac{1}{\kappa}\dot\sigma^+(t_{k-1})+ \frac{t}{\kappa}(\dot\sigma^-(t_k)-\dot\sigma^+(t_k)),\quad 0\le t\le 1,\\
&&\alpha_{k+1}(t)=(\sigma(t), -\frac{1}{\kappa}\dot\sigma^+(t_{k}))=
(\sigma(t), -\frac{1}{\kappa}\dot\sigma^-(T)),\quad t_{k}\le t\le T.
\end{eqnarray*}
Then $\beta_1(1)=\alpha_2(t_1)$, $\alpha_2(t_2)=\beta_2(0)$, $\cdots$,
$\beta_k(1)=\alpha_{k+1}(t_k)$, that is, $\alpha_1\beta_1\cdots\alpha_k\beta_k\alpha_{k+1}$
is a path. Note also that
$$
\alpha_{k+1}(T)=(\sigma(T), -\frac{1}{\kappa}\dot\sigma^-(T))=(A\sigma(0), -\frac{1}{\kappa}A\dot\sigma^+(0))=
\Psi_A\alpha_1(0)
$$
by (\ref{e:bill5}). Hence $\gamma:=\alpha_1\beta_1\cdots\alpha_k\beta_k\alpha_{k+1}$
is a generalized $\Psi_A$-characteristic on $\partial(\Delta\times\Lambda)$.
Clearly, $\beta_1,\cdots,\beta_k$ all have zero actions. So 
$$
A(\gamma)=\sum_{i=0}^{k+1}\int_{t_i}^{t_{i+1}}\langle -\dot{\sigma}(t),-\frac{1}{\kappa}\dot{\sigma}^+(t_{i})\rangle_{\mathbb{R}^n}dt
=\kappa T=L(\sigma).
$$

\underline{Suppose that (ABiv) occurs}.   Let $\alpha_i$ and $\beta_j$ be defined
as above for  $i=1,\cdots,k+1$ and $j=1,\cdots,k$.
If (\ref{e:bill5}) holds, we also define $\gamma$ as above, and
get a generalized $\Psi_A$-characteristic on $\partial(\Delta\times\Lambda)$.

If (\ref{e:bill6}) occurs, we also need to define
\begin{eqnarray*}
\beta_0(t)=(\sigma(0), -\frac{1}{\kappa}\dot\sigma^-(0)+ \frac{t}{\kappa}(\dot\sigma^-(0)-\dot\sigma^+(0)),\quad 0\le t\le 1.
\end{eqnarray*}
By (\ref{e:bill4}), $\dot\sigma^-(0)-\dot\sigma^+(0)$ is an outer normal vector to $\partial\Delta$
at $\sigma(0)$. It is easy to see that $\beta_0$ is a
generalized characteristic on $\partial(\Delta\times\Lambda)$ satisfying $\beta_0(1)=\alpha_1(0)$. Moreover
$$
\Psi_A\beta_0(0)=\Psi_A(\sigma(0), -\frac{1}{\kappa}\dot\sigma^-(0))=
(A\sigma(0), -\frac{1}{\kappa}A\dot\sigma^-(0))=(\sigma(T), -\frac{1}{\kappa}\dot\sigma^-(T))=\alpha_{k+1}(T)
$$
by (\ref{e:bill6}). Thus $\gamma:=\beta_0\alpha_1\beta_1\cdots\alpha_k\beta_k\alpha_{k+1}$
is a generalized $\Psi_A$-characteristic on $\partial(\Delta\times\Lambda)$.

\underline{Suppose that (ABv) occurs}. If
(\ref{e:bill5}) holds, we define $\gamma$ as in the case of (ABv).
When (\ref{e:bill7}) occurs, we  need to define
\begin{eqnarray*}
\beta_{k+1}(t)=(\sigma(T), -\frac{1}{\kappa}\dot\sigma^-(T)+ \frac{t}{\kappa}(\dot\sigma^-(T)-\dot\sigma^+(T)),\quad 0\le t\le 1.
\end{eqnarray*}
Then $\gamma:=\alpha_1\beta_1\cdots\alpha_k\beta_k\alpha_{k+1}\beta_{k+1}$
is a generalized $\Psi_A$-characteristic on $\partial(\Delta\times\Lambda)$.

\underline{Suppose that (ABvi) occurs}. If
(\ref{e:bill5}) or (\ref{e:bill6}) or (\ref{e:bill7}) holds, we define
$$
\gamma:=\alpha_1\beta_1\cdots\alpha_k\beta_k\alpha_{k+1},\quad\hbox{or}\quad
\gamma:=\beta_0\alpha_1\beta_1\cdots\alpha_k\beta_k\alpha_{k+1},\quad\hbox{or}\quad
\gamma:=\alpha_1\beta_1\cdots\alpha_k\beta_k\alpha_{k+1}\beta_{k+1}.
$$
Finally, if (\ref{e:bill8}) holds, we define
$\gamma:=\beta_0\alpha_1\beta_1\cdots\alpha_k\beta_k\alpha_{k+1}\beta_{k+1}$.
\end{proof}

However, under the assumptions of Proposition~\ref{prop:billT.5} we cannot affirm
that the projection to  $\Delta$ of a proper $(A, \Delta, B^n)$-billiard trajectory
is an $A$-billiard trajectory in  $\Delta$.

\begin{proposition}\label{prop:billT.6}
Let $\Delta\subset\mathbb{R}^n$ be a smooth convex body  and $A\in{\rm O}(n)$
satisfy ${\rm Fix}(A)\cap{\rm Int}(\Delta)\ne\emptyset$. Then
 it holds that
$$
\xi^A(\Delta)\le\inf\{L(\sigma)\,|\,\hbox{$\sigma$ is an  $A$-billiard trajectory  in $\Delta$}\}.
$$
\end{proposition}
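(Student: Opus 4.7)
The plan is to assemble three ingredients already established in the paper: Proposition~\ref{prop:billT.5} (which lifts any $A$-billiard trajectory in $\Delta$ to a proper $(A,\Delta,B^n)$-billiard trajectory whose action equals the length), Remark~\ref{rem:ADL-BT}(i) (which identifies every $(A,\Delta,B^n)$-billiard trajectory with a generalized $\Psi_A$-characteristic on $\partial(\Delta\times B^n)$), and Theorem~\ref{th:convex} (the representation of $c^{\Psi_A}_{\rm EHZ}(\Delta\times B^n)$ as the minimum positive action over generalized $\Psi_A$-characteristics). Once these are lined up, the inequality is immediate.

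First I would verify that Theorem~\ref{th:convex} applies to $\Delta\times B^n$. Picking any $\bar{q}\in{\rm Fix}(A)\cap{\rm Int}(\Delta)$, the point $(\bar{q},0)$ lies in ${\rm Fix}(\Psi_A)\cap{\rm Int}(\Delta\times B^n)$, since $\Psi_A(\bar{q},0)=(A\bar{q},(A^t)^{-1}\cdot 0)=(\bar{q},0)$. Hence $\Delta\times B^n$ is a convex bounded domain in $\mathbb{R}^{2n}$ containing a fixed point of $\Psi_A$, and Theorem~\ref{th:convex} gives
\begin{equation*}
\xi^A(\Delta)=c^{\Psi_A}_{\rm EHZ}(\Delta\times B^n)=\min\{A(x)>0\mid x\text{ is a generalized }\Psi_A\text{-characteristic on }\partial(\Delta\times B^n)\}.
\end{equation*}

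Next, let $\sigma:[0,T]\to\Delta$ be an arbitrary $A$-billiard trajectory in $\Delta$. By Proposition~\ref{prop:billT.5}, there exists a proper $(A,\Delta,B^n)$-billiard trajectory $\gamma:[0,T']\to\partial(\Delta\times B^n)$ whose projection to $\Delta$ is $\sigma$ and whose action satisfies $A(\gamma)=L(\sigma)$. Because $\sigma$ is nonconstant we have $L(\sigma)>0$, hence $A(\gamma)>0$. By Remark~\ref{rem:ADL-BT}(i), $\gamma$ is a generalized $\Psi_A$-characteristic on $\partial(\Delta\times B^n)$, so by the representation above $\xi^A(\Delta)\le A(\gamma)=L(\sigma)$. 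Taking the infimum over all $A$-billiard trajectories $\sigma$ in $\Delta$ yields the claimed bound.

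There is essentially no technical obstacle, since the non-smoothness of $\partial(\Delta\times B^n)$ (a manifold with corners along $\partial\Delta\times\partial B^n$) is already absorbed into the \emph{generalized} notion of $\Psi$-characteristic used in Theorem~\ref{th:convex}, and Remark~\ref{rem:ADL-BT}(i) has already identified reflections at corners with the correct admissible tangent cone $JN_{\partial(\Delta\times B^n)}$. The only minor point to double-check is that the concatenation $\gamma$ constructed in the proof of Proposition~\ref{prop:billT.5}, which is piecewise smooth but may fail to be an embedding at the instants lying in $\gamma^{-1}(\partial\Delta\times\partial B^n)$, still qualifies as a generalized $\Psi_A$-characteristic in the sense of Definition~\ref{def:character}(ii); but that definition only requires absolute continuity and the a.e. inclusion $\dot\gamma(t)\in JN_{\partial(\Delta\times B^n)}(\gamma(t))$, which is exactly what Remark~\ref{rem:ADL-BT}(i) delivers.
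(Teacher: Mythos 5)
Your proposal is correct and follows exactly the same route as the paper, whose entire proof is the single sentence that the claim ``may directly follow from Proposition~\ref{prop:billT.5}, Remark~\ref{rem:ADL-BT}(i) and Theorem~\ref{th:convex}.'' You have simply filled in the details the paper leaves implicit (checking that $(\bar{q},0)$ is an interior fixed point of $\Psi_A$ and that the concatenated lift is an admissible generalized $\Psi_A$-characteristic), which is a faithful and complete elaboration of the intended argument.
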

\begin{proof}
This may directly follow from Proposition~\ref{prop:billT.5}, Remark\ref{rem:ADL-BT}(i) and Theorem~\ref{th:convex}.
\end{proof}

The statement about relation between the action of a proper $(A, \Delta,B^n)$-billiard trajectory and the
length of its projection to $\Delta$ in Proposition~\ref{prop:billT.5} is a special case of the following proposition.
When $A=I_n$ it was showed in \cite[(7)]{AAO14}.

\begin{proposition}\label{prop:billT.2}
Let $A$, $\Delta$ and $\Lambda$ satisfy (\ref{e:linesymp2}).
If $\gamma:[0, T]\to \partial(\Delta\times\Lambda)$
is a proper $(A, \Delta, \Lambda)$-billiard trajectory
with $\gamma^{-1}(\partial\Delta\times\partial\Lambda)\cap (0,T)=\{t_1<\cdots<t_m\}$, then
the action of $\gamma$ is given by
\begin{equation}\label{e:linesymp4}
A(\gamma)=\sum^{m}_{j=0}h_\Lambda(q_j-q_{j+1})
\end{equation}
with $q_j=\pi_q(\gamma(t_j))$, $j=0,\cdots,m+1$, where $t_0=0$, $t_{m+1}=T$ and $q_{m+1}=Aq_0$.
In particular, if $\Lambda=B^n(\tau)$ for $\tau>0$ and $L(\pi_q(\gamma))$ denotes
   the length of the projection of $\gamma$ in $\Delta$ then
\begin{equation}\label{e:linesymp4.1}
A(\gamma)=\tau\sum^{m}_{j=0}\|q_{j+1}-q_j\|=\tau L(\pi_q(\gamma))
\end{equation}
since  $\Lambda^\circ=\frac{1}{\tau}B^n$ and thus
$h_\Lambda=j_{\Lambda^\circ}=\tau\|\cdot\|$.
Moreover, if $\Delta$ is strictly convex, then the action of
 any gliding $(A,\Delta, B^n)$-billiard trajectory
$\gamma:[0, T]\to \partial(\Delta\times B^n)$
 is also equal to the length of the projection $\pi_q(\gamma)$ in $\Delta$.
\end{proposition}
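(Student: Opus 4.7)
The plan is to turn the symplectic action into a purely ``$\int p\,dq$''-style integral by integration by parts and the twist boundary condition, and then evaluate it interval by interval along the partition $0=t_0<t_1<\cdots<t_m<t_{m+1}=T$. Writing $\gamma=(q,p)$, we have $\langle-J\dot\gamma,\gamma\rangle=\langle\dot p,q\rangle-\langle\dot q,p\rangle$; integrating the first summand by parts gives
\[
A(\gamma)=\tfrac12\bigl[\langle p(T),q(T)\rangle-\langle p(0),q(0)\rangle\bigr]-\int_0^T\langle p,\dot q\rangle\,dt.
\]
Since $\gamma(T)=\Psi_A\gamma(0)$ means $(q(T),p(T))=(Aq(0),(A^t)^{-1}p(0))$, we get $\langle p(T),q(T)\rangle=\langle(A^t)^{-1}p(0),Aq(0)\rangle=\langle p(0),q(0)\rangle$, so the boundary term vanishes and $A(\gamma)=-\int_0^T\langle p,\dot q\rangle\,dt$. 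This is the identity that makes the rest of the proof essentially bookkeeping.

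Next I would observe that, because $\gamma^{-1}(\partial\Delta\times\partial\Lambda)\cap(0,T)$ is finite in the proper case, on each open subinterval $(t_j,t_{j+1})$ the trajectory avoids $\partial\Delta\times\partial\Lambda$ and by continuity lies entirely in one of the two open strata $\mathrm{Int}(\Delta)\times\partial\Lambda$ and $\partial\Delta\times\mathrm{Int}(\Lambda)$, since these are the two components of $\partial(\Delta\times\Lambda)\setminus(\partial\Delta\times\partial\Lambda)$. On a subinterval of the first type, (BT1) gives $p(t)\equiv p_j^*\in\partial\Lambda$ and $\dot q(t)=-\kappa\nabla j_\Lambda(p_j^*)$, so
\[
\int_{t_j}^{t_{j+1}}\langle-p,\dot q\rangle\,dt=\langle p_j^*,q_j-q_{j+1}\rangle,\qquad q_j-q_{j+1}=\kappa(t_{j+1}-t_j)\nabla j_\Lambda(p_j^*).
\]
Because $\nabla j_\Lambda(p_j^*)$ is the outward normal to $\partial\Lambda$ at $p_j^*$, the support point of $\Lambda$ in direction $q_j-q_{j+1}$ is $p_j^*$, hence $h_\Lambda(q_j-q_{j+1})=\langle p_j^*,q_j-q_{j+1}\rangle$. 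On a subinterval of the second type $\dot q\equiv 0$ makes the contribution zero, while $q_j=q_{j+1}$ gives $h_\Lambda(q_j-q_{j+1})=0$, so both sides still agree. Summing yields (\ref{e:linesymp4}), and the $\Lambda=B^n(\tau)$ specialization is immediate from $h_{B^n(\tau)}=\tau\|\cdot\|$.

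For the gliding statement, the same integration-by-parts argument still produces $A(\gamma)=-\int_0^T\langle p,\dot q\rangle\,dt$ (the twist identity depends only on $\gamma(T)=\Psi_A\gamma(0)$). On $\partial\Delta\times\partial B^n$ condition (BT2) forces $\dot q(t)=-\lambda(t)\nabla j_{B^n}(p(t))=-\lambda(t)p(t)$ with $\lambda(t)\ge 0$, since $j_{B^n}=\|\cdot\|$ and $|p(t)|=1$. Hence $\langle p,\dot q\rangle=-\lambda(t)$ and $|\dot q(t)|=\lambda(t)$, so
\[
A(\gamma)=\int_0^T\lambda(t)\,dt=\int_0^T|\dot q(t)|\,dt=L(\pi_q(\gamma)).
\]
The role of strict convexity of $\Delta$ here is to guarantee that $p(t)$ sits in the (smooth) tangent hyperplane $T_{q(t)}\partial\Delta$ in a well-defined way, so that the one-sided derivatives in (BT2) match up consistently along the entire curve.

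The main obstacles will be two: first, justifying that on each $(t_j,t_{j+1})$ the trajectory actually occupies a single stratum, which is where the ``proper'' hypothesis and the disconnectedness observation are indispensable; and second, handling the endpoints $t_0=0$ and $t_{m+1}=T$, since $\gamma(0)$ and $\gamma(T)$ need not lie in $\partial\Delta\times\partial\Lambda$. The second issue is benign because the twist identity absorbs precisely the boundary discrepancy one would otherwise accumulate, so no extra terms appear in the final sum. Everything else is a direct calculation from (BT1)--(BT2) and the defining property $h_\Lambda(\nabla j_\Lambda(p^*))=\langle p^*,\nabla j_\Lambda(p^*)\rangle=j_\Lambda(p^*)=1$ (Euler's identity on $\partial\Lambda$).
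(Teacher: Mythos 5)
Your argument is correct where it applies and follows essentially the same route as the paper: integrate by parts, kill the boundary term with the twist condition $\gamma(T)=\Psi_A\gamma(0)$ via $\langle (A^t)^{-1}p(0),Aq(0)\rangle=\langle p(0),q(0)\rangle$, and evaluate $-\int_0^T\langle p,\dot q\rangle\,dt$ interval by interval using (BT1). Two of your choices are in fact small improvements in exposition: identifying $\langle p_j^*,q_j-q_{j+1}\rangle$ with $h_\Lambda(q_j-q_{j+1})$ directly through the outward-normal/support-point characterization replaces the paper's slightly clumsier step (which only observes that $-\langle q_{j+1}-q_j,p_j\rangle$ is either the maximum or the minimum of $p\mapsto -\langle q_{j+1}-q_j,p\rangle$ on $j_\Lambda^{-1}(1)$ and then invokes positivity via Euler's identity to exclude the minimum); and you explicitly dispose of the subintervals carried into $\partial\Delta\times{\rm Int}(\Lambda)$, where $\dot q\equiv 0$ and $h_\Lambda(0)=0$, a case the paper's formulas pass over silently. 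The gliding computation from (BT2) with $\nabla j_{B^n}(p)=p$ on the unit sphere is also a clean, direct substitute for the paper's citation of the explicit form of the gliding vector field.

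The one genuine omission is that the proposition is stated under hypothesis (\ref{e:linesymp2}) alone, so $\Delta$ and $\Lambda$ need not contain the origin in their interiors; in that situation an $(A,\Delta,\Lambda)$-billiard trajectory is \emph{defined} (Remark~\ref{rem:ADL-BT}(ii)) by requiring that the translate $\gamma-(\bar q,\bar p)$ satisfy (BT1)--(BT2) for $\Delta-\bar q$ and $\Lambda-\bar p$, and your appeals to $j_\Lambda$, $\nabla j_\Lambda(p_j^*)$ and $j_\Lambda(p_j^*)=1$ are not directly available for $\gamma$ itself. The paper devotes a separate paragraph to this reduction, computing $\sum_j h_{\Lambda-\bar p}(q_j-q_{j+1})=\sum_j h_\Lambda(q_j-q_{j+1})-\langle \bar p, q_0-Aq_0\rangle$ and observing that the correction vanishes because $A^t\bar p=\bar p$ (so $\langle\bar p,q_0\rangle=\langle A^t\bar p,q_0\rangle=\langle\bar p,Aq_0\rangle$). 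Alternatively, your support-point argument survives translation verbatim, since the outward normal cone of $\Lambda$ at $p_j^*$ coincides with that of $\Lambda-\bar p$ at $p_j^*-\bar p$, so $h_\Lambda(q_j-q_{j+1})$ is still attained at $p_j^*$; but one of these two reductions must be stated for the proof to cover the hypotheses as written.
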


\begin{proof}
 Firstly,
we prove (\ref{e:linesymp4}) in the case that $0\in{\rm Int}(\Delta)$ and $0\in{\rm Int}(\Lambda)$.
By a direct computation  we have
\begin{eqnarray*}
A(\gamma)&=&\frac{1}{2}\int^T_0\langle -J\dot\gamma(t),\gamma(t)\rangle dt\\
&=&\frac{1}{2}\sum^{m}_{j=0}\int^{t_{j+1}}_{t_j}\langle -J\dot\gamma(t),\gamma(t)\rangle dt\\
&=&\frac{1}{2}\sum^{m}_{j=0}\int^{t_{j+1}}_{t_j}\left[(\dot{p}(t), q(t))_{\mathbb{R}^n}-(\dot{q}(t),p(t))_{\mathbb{R}^n}\right]dt\\
&=&-\sum^{m}_{j=0}\int^{t_{j+1}}_{t_j}(\dot{q}(t),{p}(t))_{\mathbb{R}^n}dt+
\frac{1}{2}\sum^{m}_{j=0}\left[(q(t_{j+1}),p(t_{j+1}))_{\mathbb{R}^n}-(q(t_j), p(t_j))_{\mathbb{R}^n}\right]\\
&=&-\sum^{m}_{j=0}\int^{t_{j+1}}_{t_j}(\dot{q}(t), {p}(t))_{\mathbb{R}^n}dt+
\frac{1}{2}\left[(q(t_{m+1}),p(t_{m+1}))_{\mathbb{R}^n}-(q(t_0),p(t_0))_{\mathbb{R}^n}\right]\\
&=&-\sum^{m}_{j=0}\int^{t_{j+1}}_{t_j}(\dot{q}(t), p(t))_{\mathbb{R}^n}dt
\end{eqnarray*}
since $(q(t_{m+1}),p(t_{m+1}))_{\mathbb{R}^n}=(Aq(t_0), (A^t)^{-1}p(t_0))_{\mathbb{R}^n}=
(q(t_0),p(t_0))_{\mathbb{R}^n}$. By (BT1) we have
\begin{eqnarray*}
-\int^{t_{i+1}}_{t_i}(\dot{q}(t),p(t))_{\mathbb{R}^n}dt=
-(q(t_{i+1})-q(t_i), p(t_i))_{\mathbb{R}^n}=-(q_{i+1}-q_i, p_i)_{\mathbb{R}^n},
\end{eqnarray*}
where $j_\Lambda(p_i)=1$ and $q_{i+1}-q_i=-\kappa (t_{i+1}-t_i)\nabla j_\Lambda(p_i)$.
The last two equalities mean that $-(q_{i+1}-q_i, p_i)_{\mathbb{R}^n}$ is
either the maximum or the minimum of the function
$p\mapsto -(q_{i+1}-q_i, p)_{\mathbb{R}^n}$ on $j^{-1}_\Lambda(1)$. Note that
\begin{eqnarray*}
-\int^{t_{i+1}}_{t_i}(\dot{q}(t),p(t))_{\mathbb{R}^n}dt=
\int^{t_{i+1}}_{t_i}(\kappa\nabla j_\Lambda(p(t_i)), p(t_i))_{\mathbb{R}^n}dt=
\kappa(t_{i+1}-t_i)>0.
\end{eqnarray*}
So $-(q_{i+1}-q_i, p_i)_{\mathbb{R}^n}$ must be the maximum of the function
$p\mapsto -(q_{i+1}-q_i, p)_{\mathbb{R}^n}$ on $j^{-1}_\Lambda(1)$,
which by definition equals $h_\Lambda(q_i-q_{i+1})$. In this case
(\ref{e:linesymp4}) follows immediately.

Next, we deal with  the general case. Now we have
 $\bar{q}\in{\rm Int}(\Delta)$ and $\bar{p}\in{\rm Int}(\Lambda)$ such that
the above result can be applied to $\gamma-(\bar{q},\bar{p})$ yielding
\begin{eqnarray*}
A(\gamma-(\bar{q},\bar{p}))&=&\sum^{m}_{j=0}h_{\Lambda-\bar{p}}((q_j-\bar{q})-(q_{j+1}-\bar{q}))=
\sum^{m}_{j=0}h_{\Lambda-\bar{p}}(q_j-q_{j+1})\\
&=&\sum^{m}_{j=0}h_{\Lambda}(q_j-q_{j+1})-\sum^{m}_{j=0}(\bar{p}, q_j-q_{j+1})_{\mathbb{R}^n}
\end{eqnarray*}
because $h_{\Lambda-\bar{p}}(u)=h_{\Lambda}(u)-(\bar{p}, u)_{\mathbb{R}^n}$,
where  $q_j=\pi_q(\gamma(t_i))$, $i=0,\cdots,m+1$, where $t_0=0$, $t_{m+1}=T$ and $q_{m+1}=Aq_0$.
Moreover, as above we may compute
\begin{eqnarray*}
A(\gamma)&=&-\sum^{m}_{j=0}\int^{t_{j+1}}_{t_j}(\dot{q}(t), p(t))_{\mathbb{R}^n}dt,\\
A(\gamma-(\bar{q},\bar{p}))&=&-\sum^{m}_{j=0}\int^{t_{j+1}}_{t_j}(\dot{q}(t), p(t)-\bar{p})_{\mathbb{R}^n}dt\\
&=&-\sum^{m}_{j=0}\int^{t_{j+1}}_{t_j}(\dot{q}(t), p(t))_{\mathbb{R}^n}dt-\sum^{m}_{j=0}(\bar{p}, q_j-q_{j+1})_{\mathbb{R}^n}
\end{eqnarray*}
These lead to the desired  (\ref{e:linesymp4}) directly.

Thirdly, we prove the final claim. Now $\bar{p}=0$, The above expressions show that
$A(\gamma)=A(\gamma-(\bar{q},0)$. Since $\pi_q(\gamma)-\bar{q}$ and $\pi_q(\gamma)$
have the same length, we only need to prove the case  $\bar{q}=0$.

Since $\gamma$ is gliding, by Proposition~\ref{prop:billT.0}(i) we have
$$
\dot\gamma(t)=(\dot{\gamma}_q(t), \dot{\gamma}_p(t))=(-\alpha(t)\gamma_p(t)/|\gamma_p(t)|,
\beta(t)\nabla g_\Delta(\gamma_q(t))),
$$
where $\alpha$ and $\beta$ are two smooth positive functions satisfying
a condition as in \cite[(8)]{AAO14}. Hence $\gamma_q=\pi_q(\gamma)$ has length
$$
L(\gamma_q)=\int^T_0|\dot{\gamma}_q(t)|dt=\int^T_0\alpha(t)dt.
$$
On the other hand, as above we have
\begin{eqnarray*}
A(\gamma)&=&\frac{1}{2}\int^T_0\langle-J\dot\gamma(t),\gamma(t)\rangle dt\\
&=&\frac{1}{2}\int^T_0\bigl((\dot\gamma_p(t),\gamma_q(t))_{\mathbb{R}^n}-
(\dot\gamma_q(t)\gamma_p(t)))_{\mathbb{R}^n}\bigr)dt\\
&=&-\int^T_0(\gamma_p(t),\dot\gamma_q(t)))_{\mathbb{R}^n}dt=\int^T_0\alpha(t)dt.
\end{eqnarray*}
\end{proof}

\section{Proofs of Theorems~\ref{th:billT.2},~\ref{th:billT.4} and Proposition~\ref{prop:add}}\label{sec:BillT-2}
\setcounter{equation}{0}

\begin{proof}[{\it Proof of Theorem~\ref{th:billT.2}}]
Let $\lambda\in (0,1)$. Since
${\rm Int}(\Delta_1)\cap{\rm Fix}(A)\ne\emptyset$,
 ${\rm Int}(\Delta_2)\cap{\rm Fix}(A)\ne\emptyset$
 and ${\rm Int}(\Lambda)\cap{\rm Fix}(A^t)\ne\emptyset$,
 ${\rm Fix}(\Psi_A)$ is intersecting with
 both ${\rm Int}(\Delta_1\times\Lambda)$ and ${\rm Int}(\Delta_2\times\Lambda)$.
Note that
\begin{eqnarray*}
&&\bigl(\lambda\Delta_1\bigr)\times \bigl(\lambda\Lambda\bigr)+
\bigl((1-\lambda)\Delta_2\bigr)\times \bigl((1-\lambda)\Lambda\bigr)\\
&=&\bigl(\lambda\Delta_1+(1-\lambda)\Delta_2\bigr)\times \bigl(\lambda\Lambda+
(1-\lambda)\Lambda\bigr)\\
&=&\bigl(\lambda\Delta_1+(1-\lambda)\Delta_2\bigr)\times \Lambda.
\end{eqnarray*}
It follows from Corollary~\ref{cor:Brun.1} that
\begin{eqnarray}\label{e:BillT.1}
&&\bigl(c^{\Psi_A}_{\rm EHZ}\bigl(\lambda\Delta_1\times \lambda\Lambda\bigr)\bigr)^{\frac{1}{2}}+
\bigl(c^{\Psi_A}_{\rm EHZ}\bigl((1-\lambda)\Delta_2\times (1-\lambda)\Lambda\bigr)\bigr)^{\frac{1}{2}}\nonumber\\
&\le &\bigl(c^{\Psi_A}_{\rm EHZ}\bigl(\bigl(\lambda\Delta_1+(1-\lambda)\Delta_2\bigr)\times \Lambda\bigr)\bigr)^{\frac{1}{2}},
\end{eqnarray}
which is equivalent  to
\begin{eqnarray}\label{e:BillT.2}
&&\lambda\bigl(c^{\Psi_A}_{\rm EHZ}\bigl(\Delta_1\times \Lambda\bigr)\bigr)^{\frac{1}{2}}+
(1-\lambda)\bigl(c^{\Psi_A}_{\rm EHZ}\bigl(\Delta_2\times \Lambda\bigr)\bigr)^{\frac{1}{2}}\nonumber\\
&\le &\bigl(c^{\Psi_A}_{\rm EHZ}\bigl(\bigl(\lambda\Delta_1+(1-\lambda)\Delta_2\bigr)\times \Lambda\bigr)^{\frac{1}{2}}.
\end{eqnarray}
By this and the weighted arithmetic-geometric mean inequality
\begin{eqnarray*}
&&\lambda\bigl(c^{\Psi_A}_{\rm EHZ}\bigl(\Delta_1\times \Lambda\bigr)\bigr)^{\frac{1}{2}}+
(1-\lambda)\bigl(c^{\Psi_A}_{\rm EHZ}\bigl(\Delta_2\times \Lambda\bigr)\bigr)^{\frac{1}{2}}\\
&&\ge \left(\bigl(c^{\Psi_A}_{\rm EHZ}\bigl(\Delta_1\times \Lambda\bigr)\bigr)^{\frac{1}{2}}\right)^\lambda
\left(\bigl(c^{\Psi_A}_{\rm EHZ}\bigl(\Delta_2\times \Lambda\bigr)\bigr)^{\frac{1}{2}}\right)^{(1-\lambda)},
\end{eqnarray*}
we get
\begin{eqnarray}\label{e:BillT.3}
&&\left(\bigl(c^{\Psi_A}_{\rm EHZ}\bigl(\Delta_1\times \Lambda\bigr)\bigr)^{\frac{1}{2}}\right)^\lambda
\left(\bigl(c^{\Psi_A}_{\rm EHZ}\bigl(\Delta_2\times \Lambda\bigr)\bigr)^{\frac{1}{2}}\right)^{(1-\lambda)}
\nonumber\\
&\le& \bigl(c^{\Psi_A}_{\rm EHZ}\bigl(\bigl(\lambda\Delta_1+(1-\lambda)\Delta_2\bigr)\times \Lambda\bigr)^{\frac{1}{2}}.
\end{eqnarray}
Replacing $\Delta_1$ and $\Delta_2$ by $\Delta_1':=\lambda^{-1}\Delta_1$ and $\Delta_2':=(1-\lambda)^{-1}\Delta_2$,
respectively, we arrive at
\begin{eqnarray}\label{e:BillT.4}
\left(\bigl(c^{\Psi_A}_{\rm EHZ}\bigl(\Delta'_1\times \Lambda\bigr)\bigr)^{\frac{1}{2}}\right)^\lambda
\left(\bigl(c^{\Psi_A}_{\rm EHZ}\bigl(\Delta'_2\times \Lambda\bigr)\bigr)^{\frac{1}{2}}\right)^{(1-\lambda)}
\le \bigl(c^{\Psi_A}_{\rm EHZ}\bigl(\bigl(\Delta_1+\Delta_2\bigr)\times \Lambda\bigr)^{\frac{1}{2}}.
\end{eqnarray}
For any $\mu>0$, since
$$
\phi:(\Delta_1\times\Lambda, \mu\omega_0)\to ((\mu\Delta_1)\times\Lambda, \omega_0),\;(x,y)\mapsto (\mu x,y)
$$
is a symplectomorphism which commutes with $\Psi_A$, we have
$$
c^{\Psi_A}_{\rm EHZ}\bigl(\Delta'_1\times \Lambda\bigr)=\lambda^{-1}c^{\Psi_A}_{\rm EHZ}\bigl(\Delta_1\times \Lambda\bigr),
\qquad c^{\Psi_A}_{\rm EHZ}\bigl(\Delta'_2\times \Lambda\bigr)=(1-\lambda)^{-1}c^{\Psi_A}_{\rm EHZ}\bigl(\Delta_2\times \Lambda\bigr).
$$
Let us choose $\lambda\in (0,1)$ such that $\Upsilon:=c^{\Psi_A}_{\rm EHZ}\bigl(\Delta'_1\times \Lambda\bigr)=c^{\Psi_A}_{\rm EHZ}\bigl(\Delta'_2\times \Lambda\bigr)$, i.e.,
\begin{equation}\label{e:BillT.5}
\lambda=\frac{c^{\Psi_A}_{\rm EHZ}(\Delta_1\times \Lambda)}{
c^{\Psi_A}_{\rm EHZ}(\Delta_1\times \Lambda)+ c^{\Psi_A}_{\rm EHZ}(\Delta_2\times \Lambda)}.
\end{equation}
Then
\begin{eqnarray}\label{e:BillT.6}
\xi^A_\Lambda(\Delta_1+\Delta_2)&=&c^{\Psi_A}_{\rm EHZ}\bigl(\bigl(\Delta_1+\Delta_2\bigr)\times \Lambda\bigr)\nonumber\\
&\ge&\left(c^{\Psi_A}_{\rm EHZ}\bigl(\Delta'_1\times \Lambda\bigr)\right)^\lambda
\left(c^{\Psi_A}_{\rm EHZ}\bigl(\Delta'_2\times \Lambda\bigr)\right)^{(1-\lambda)}\nonumber\\
&=&\Upsilon=\lambda\Upsilon+(1-\lambda)\Upsilon\nonumber\\
&=&\lambda c^{\Psi_A}_{\rm EHZ}\bigl(\Delta'_1\times \Lambda\bigr)+(1-\lambda)c^{\Psi_A}_{\rm EHZ}\bigl(\Delta'_2\times \Lambda\bigr)
\nonumber\\
&=&c^{\Psi_A}_{\rm EHZ}\bigl(\Delta_1\times \Lambda\bigr)+c^{\Psi_A}_{\rm EHZ}\bigl(\Delta_2\times \Lambda\bigr)\nonumber\\
&=&\xi^A_\Lambda(\Delta_1)+ \xi^A_\Lambda(\Delta_2)
\end{eqnarray}
and hence (\ref{e:linesymp6}) holds.

Final claim follows from Corollary~\ref{cor:Brun.1}. Theorem~\ref{th:billT.2} is proved.
\end{proof}

%
%

\begin{proof}[Proof of Proposition~\ref{prop:add}]
{\bf (i)} By the definition of $\xi^A$ and  Proposition~\ref{MonComf}(i)-(ii) we have
\begin{eqnarray}\label{e:BillT.8}
\xi^A(\Delta)&=&c^{\Psi_A}_{\rm EHZ}(\Delta\times B^n)\nonumber\\
&\ge& c^{\Psi_A}_{\rm EHZ}(B^{n}(\bar{q},r)\times B^n)\nonumber\\
&=&c^{\Psi_A}_{\rm EHZ}(B^{n}(0,r)\times B^n)
\end{eqnarray}
since $(\bar{q},0)$ is a fixed point of $\Psi_A$. Note that
\begin{eqnarray}\label{e:BillT.9}
B^n(0,r)\times B^n\to B^n(0,\sqrt{r})\times B^n(0,\sqrt{r}),\;(q,p)\mapsto (q/\sqrt{r}, \sqrt{r}p)
\end{eqnarray}
is a symplectomorphism which commutes with $\Psi_A$.
Using Proposition~\ref{MonComf}(i)-(ii) we deduce
\begin{eqnarray*}
c^{\Psi_A}_{\rm EHZ}(B^{n}(0,r)\times B^n)&=&c^{\Psi_A}_{\rm EHZ}(B^n(0,\sqrt{r})\times B^n(0,\sqrt{r}))\\
&=&rc^{\Psi_A}_{\rm EHZ}(B^n\times B^n)\\
&\ge &rc^{\Psi_A}_{\rm EHZ}(B^{2n})=\frac{r \mathfrak{t}(\Psi_A)}{2}
\end{eqnarray*}
because of (\ref{e:TPsi}). Then (\ref{e:linesymp8}) follows from  (\ref{e:BillT.8}).

\noindent{\bf (ii)} For any $u\in S^n_\Delta$, $\Delta$ sits between support planes $H(\Delta,u)$ and $H(\Delta,-u)$,
 and the hyperplane $H_u$  is between  $H(\Delta,u)$ and $H(\Delta,-u)$ and
 has distance ${\rm width}(\Delta)/2$ to $H(\Delta,u)$ and $H(\Delta,-u)$ respectively.
Obverse that $\Psi_{{\bf O}, \bar{q}}(\Delta\times B^n)=({\bf O}(\Delta-\bar{q}))\times B^n$ is contained in
$Z^{2n}_\Delta$. From this and (\ref{e:inv}) it follows that
\begin{eqnarray*}
\xi^A(\Delta)=c^{\Psi_A}_{\rm EHZ}(\Delta\times B^n)
=c_{\rm EHZ}^{\Psi_{{\bf O}, \bar{q}}\Psi_A\Psi_{{\bf O}, \bar{q}}^{-1}}(
\Psi_{{\bf O}, \bar{q}}(\Delta\times B^n))
\le  c_{\rm EHZ}^{\Psi_{{\bf O}, \bar{q}}\Psi_A\Psi_{{\bf O}, \bar{q}}^{-1}}(Z^{2n}_\Delta).
\end{eqnarray*}
Hence (\ref{e:linesymp17}) is proved.
\end{proof}

In order to prove Theorem~\ref{th:billT.4} we need:

\begin{lemma}\label{lem:add}
For $A\in{\rm GL}(n)$ and a convex body $\Delta\subset\mathbb{R}^n_q$
with ${\rm Fix}(A)\cap{\rm Int}(\Delta)\ne\emptyset$,
if $\Delta$ is contained in the closure of the ball $B^{n}(\bar{q},R)$
with $A\bar{q}=\bar{q}\in {\rm Int}(\Delta)$, then
\begin{equation}\label{e:linesymp8.2}
\xi^A(\Delta)\le \mathfrak{t}(\Psi_A)R.
\end{equation}
\end{lemma}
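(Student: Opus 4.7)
The plan is to chain together monotonicity, the invariance of $c^{\Psi_A}_{\rm EHZ}$ under symplectomorphisms commuting with $\Psi_A$, scaling, and the identity $\mathfrak{t}(\Psi_A)=2c^{\Psi_A}_{\rm EHZ}(B^{2n})$ from (\ref{e:TPsi}). The whole argument runs parallel to Proposition~\ref{prop:add}(i) but takes the outer ball $B^n(\bar q,R)\supseteq\Delta$ rather than an inner ball, and uses the opposite inclusion $B^n\times B^n\subset B^{2n}(\sqrt2)$ to pass from the polydisc to the Euclidean ball.

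First I would note that since $\bar q\in{\rm Int}(\Delta)$ is a fixed point of $A$, $(\bar q,0)$ is a fixed point of $\Psi_A$ lying in both ${\rm Int}(\Delta\times B^n)$ and ${\rm Int}(B^n(\bar q,R)\times B^n)$, so all the capacities under consideration are well-defined. By the monotonicity in Proposition~\ref{MonComf}(ii) applied to the identity embedding (which trivially commutes with $\Psi_A$) together with $\Delta\subset\overline{B^n(\bar q,R)}$, one gets
\[
\xi^A(\Delta)=c^{\Psi_A}_{\rm EHZ}(\Delta\times B^n)\le c^{\Psi_A}_{\rm EHZ}\bigl(B^n(\bar q,R)\times B^n\bigr).
\]
Next the translation $(q,p)\mapsto(q-\bar q,p)$ is a symplectomorphism which commutes with $\Psi_A$ precisely because $A\bar q=\bar q$, so by (\ref{e:inv}) the right-hand side equals $c^{\Psi_A}_{\rm EHZ}(B^n(0,R)\times B^n)$.

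Then I would apply the linear symplectomorphism $\phi\colon(q,p)\mapsto(q/\sqrt R,\sqrt R\,p)$, which one checks commutes with $\Psi_A={\rm diag}(A,(A^t)^{-1})$ (both maps act diagonally on the $q$- and $p$-factors by scalars). It sends $B^n(0,R)\times B^n$ onto $B^n(0,\sqrt R)\times B^n(0,\sqrt R)=\sqrt R\cdot(B^n\times B^n)$, so combining (\ref{e:inv}) with the scaling formula (\ref{e:inv.1}) gives
\[
c^{\Psi_A}_{\rm EHZ}\bigl(B^n(0,R)\times B^n\bigr)=c^{\Psi_A}_{\rm EHZ}\bigl(\sqrt R\cdot(B^n\times B^n)\bigr)=R\cdot c^{\Psi_A}_{\rm EHZ}(B^n\times B^n).
\]

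The final step — the one mildest obstacle in an otherwise routine argument — is to bound $c^{\Psi_A}_{\rm EHZ}(B^n\times B^n)$ from above in terms of $\mathfrak t(\Psi_A)$. For this I would use the inclusion $B^n\times B^n\subset B^{2n}(\sqrt 2)$, which holds because $(q,p)\in B^n\times B^n$ implies $|(q,p)|^2=|q|^2+|p|^2<2$. Monotonicity, together with the scaling identity and (\ref{e:TPsi}), then yields
\[
c^{\Psi_A}_{\rm EHZ}(B^n\times B^n)\le c^{\Psi_A}_{\rm EHZ}(B^{2n}(\sqrt 2))=2\,c^{\Psi_A}_{\rm EHZ}(B^{2n})=\mathfrak t(\Psi_A).
\]
Stringing together the three displayed inequalities produces $\xi^A(\Delta)\le R\,\mathfrak t(\Psi_A)$, which is (\ref{e:linesymp8.2}).
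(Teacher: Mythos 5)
Your proposal is correct and follows essentially the same route as the paper's own proof: monotonicity to pass from $\Delta\times B^n$ to $B^n(\bar q,R)\times B^n$, translation and the rescaling $(q,p)\mapsto(q/\sqrt R,\sqrt R\,p)$ to extract the factor $R$, and the inclusion $B^n\times B^n\subset B^{2n}(\sqrt 2)$ together with $\mathfrak t(\Psi_A)=2c^{\Psi_A}_{\rm EHZ}(B^{2n})$ to finish. The only difference is that you spell out the commutation and scaling details that the paper leaves implicit by referring back to the proof of Proposition~\ref{prop:add}(i).
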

\begin{proof}
 As in the proof of Proposition~\ref{prop:add}(i) we deduce
\begin{eqnarray*}
\xi^A(\Delta)&=&c^{\Psi_A}_{\rm EHZ}(\Delta\times B^n)\\
&\le& c^{\Psi_A}_{\rm EHZ}(B^{n}(\bar{q},R)\times B^n)\nonumber\\
&=&c^{\Psi_A}_{\rm EHZ}(B^{n}(0,R)\times B^n)\\
&=&c^{\Psi_A}_{\rm EHZ}(B^{n}(0,\sqrt{R})\times B^n(0,\sqrt{R}))\\
&=&Rc^{\Psi_A}_{\rm EHZ}(B^{n}\times B^n)\\
&\le &Rc^{\Psi_A}_{\rm EHZ}(B^{2n}(0,\sqrt{2}))\le  \mathfrak{t}(\Psi_A)R
\end{eqnarray*}
by (\ref{e:TPsi}). This and Theorem~\ref{th:convex} yield the desired claims.
\end{proof}

\begin{proof}[Proof of Theorem~\ref{th:billT.4}]
Under the assumptions of Theorem~\ref{th:billT.4} it was stated in the bottom of \cite[page 177]{AAO14} that
$\xi(\Delta)=L(\sigma)$  for some periodic billiard trajectory $\sigma$ in $\Delta$.
 It follows from Lemma~\ref{lem:add} that
$\xi(\Delta)=\xi^{I_n}(\Delta)\le \pi{\rm diam}(\Delta)$, and so
$L(\sigma)\le \pi{\rm diam}(\Delta)$.
\end{proof}

\vspace{5mm}
\noindent{\bf Declarations}\\

\noindent{\bf Data Availability Statements Non applicable.}\\

\noindent{\bf Conflict of interest}\quad The authors declare that they have no conflict of interest.


\medskip

\begin{tabular}{l}
Department of Mathematics, Civil Aviation University of China\\
 Tianjin  300300, The People's Republic of China\\
 E-mail address: rrjin@cauc.edu.cn\\
 \\
 School of Mathematical Sciences, Beijing Normal University\\
 Laboratory of Mathematics and Complex Systems, Ministry of Education\\
 Beijing 100875, The People's Republic of China\\
 E-mail address: gclu@bnu.edu.cn\\
\end{tabular}
\end{document}